\documentclass[11pt,a4paper]{article}

\usepackage{dsfont}
\usepackage{amsmath,amsthm,amssymb}
\usepackage{hyperref}
\usepackage{aligned-overset}
\usepackage[a4paper,top=3cm,bottom=3cm,left=3cm,right=3cm,marginparwidth=1.75cm]{geometry}
\usepackage{enumitem}
\setenumerate[1]{label=(\arabic*)}
\makeatletter
\let\@fnsymbol\@arabic
\makeatother
\newtheorem{theorem}{Theorem}
\newtheorem{theoremintro}{Theorem}

\newtheorem{prop}[theorem]{Proposition}
\newtheorem{lemma}[theorem]{Lemma}
\newtheorem{corollary}[theorem]{Corollary}
\newtheorem{corollaryintro}[theoremintro]{Corollary}

\theoremstyle{definition}
\newtheorem{definition}[theorem]{Definition}
\newtheorem{notation}[theorem]{Notation}
\newtheorem{example}[theorem]{Example}
\theoremstyle{remark}
\newtheorem{remark}[theorem]{Remark}
\numberwithin{theorem}{section}
\numberwithin{equation}{section}
\newcommand{\overbar}[1]{\mkern 1.5mu\overline{\mkern-1.5mu#1\mkern-1.5mu}\mkern 1.5mu}

\newcommand{\R}{\mathbb{R}}
\newcommand{\N}{\mathbb{N}}
\newcommand{\op}{\operatorname}

\newcommand{\Z}{\mathbb{Z}}
\newcommand{\C}{\mathbb{C}}
\renewcommand{\tilde}{\widetilde}

\def\acts{\curvearrowright}
\newcommand{\e}{\varepsilon}
\newcommand{\ssubset}{\subset\joinrel\subset}

\title{Equivariant $\mathcal{Z}$-stability for single automorphisms on simple $C^*$-algebras with tractable trace simplices}
\author{Lise Wouters\thanks{Department of Mathematics, KU Leuven, 3001 Leuven (Belgium), lise.wouters@kuleuven.be.\newline Funding: This work was partially supported by long term structural funding – Methusalem grant of the Flemish Government and supported by the Research Foundation Flanders (PhD-grant 11B6620N fundamental research).
\newline
\textbf{2020 Mathematics Subject Classification:} 46L55, 46L40.
\newline
\textbf{Keywords:} $C^*$-dynamics, automorphisms, equivariant $\mathcal{Z}$-stability, Bauer simplex}}
\date{\vspace{-5ex}}

\begin{document}
\maketitle
\textbf{Abstract.}
Let $A$ be an algebraically simple, separable, nuclear, $\mathcal{Z}$-stable $C^*$-algebra for which the trace space $T(A)$ is a Bauer simplex and the extremal boundary $\partial_e T(A)$ has finite covering dimension. We prove that each automorphism $\alpha$ on $A$ is cocycle conjugate to its tensor product with the trivial automorphism on the Jiang--Su algebra. At least for single automorphisms this generalizes a recent result by Gardella--Hirshberg--Vaccaro. 
If $\alpha$ is strongly outer as an action of $\Z$, we prove it has finite Rokhlin dimension with commuting towers. As a consequence it tensorially absorbs any automorphism on the Jiang--Su algebra. 

\section*{Introduction}
\addcontentsline{toc}{section}{Introduction}
Over the past few years, enormous advancements have been made in the Elliott program (see for example \cite{ElliotGongLinNiu,  TikuisisWhiteWinter, GongLinNiu_rev, GongLinNiu_rev2, ElliotGongLinNiu20, GongLin, GongLin20,Schafhauser}), leading to a significant improvement in our knowledge about simple, separable, nuclear $C^*$-algebras and their classification. A natural next step to further increase our understanding of these objects is to study their symmetries, which is why recently a lot of interest among $C^*$-algebraists goes out to investigating the structure and classification of actions of amenable
groups on simple, separable, nuclear $C^*$-algebras in the scope of the Elliott program.
This reflects a more general pattern that occurs throughout the study of operator algebras: after succesfully classifying a class of interesting objects by means of a functorial invariant, a natural follow-up question becomes to study their innate symmetries and to investigate to which extent the functorial invariant helps to classify these. A prime historical example of this phenomenon is given by the Connes--Haagerup classification of injective factors \cite{Connes76,Haagerup}, which involved a classification of cyclic group actions \cite{Connes73,Connes75}. In turn, the increased interest in group actions on von Neumann algebras led to the classification of actions of countable amenable groups on injective factors, see \cite{Connes77, Jones, Ocneanu, SutherlandTakesaki, KawahigashiSutherlandTakesaki, KatayamaSutherlandTakesaki, Masuda, Masuda13}.
This paper aims to contribute to the ongoing research on classifying amenable group actions on classifiable $C^*$-algebras by tackling a related question, namely that of the potential scope of such a result. More precisely, the question is whether one can expect to classify all actions of amenable groups on classifiable $C^*$-algebras up to cocycle conjugacy by a computable invariant, or if one needs to impose an additional regularity criterion. 

In the context of the Elliott program, the analogous question received a lot of attention after the ermergence of certain exotic counterexamples \cite{Villadsen, Rordam03, Toms} made clear that it was impossible to achieve the original goal of classifying all simple, separable, nuclear $C^*$-algebras by the Elliott invariant (an invariant consisting of $K$-theory, traces and the natural interaction between these). The fine structure of these counterexamples was too complicated and could only be captured by a much finer invariant such as the Cuntz semigroup. It became clear that in contrast to amenable factors, which are automatically well-behaved, there is a dichotomy for simple, separable, nuclear $C^*$-algebras, separating those that can be classified by the Elliott invariant and their much wilder counterparts. In turn, this led to the study of several regularity properties that could possibly distinguish these two types of $C^*$-algebras and grant access to classification. The famous Toms--Winter conjecture \cite{ElliottToms,Winter, WinterZacharias10, Winter12} states that for non-elementary, simple, separable, nuclear $C^*$-algebras three of these at first sight totally different properties are equivalent. It is almost an established theorem by now \cite{Rordam04,MatuiSatoStrictComparison,MatuiSatoDecompositionRank,Tikuisis,SatoWhiteWinter, BBSTWW,CETWW,CastillejosEvington}, missing only one implication in full generality.  Moreover, recent progress on this last implication shows that it holds true when assuming the $C^*$-algebra has either uniform property Gamma \cite{CETW_upG} or stable rank one and locally finite nuclear dimension \cite{Thiel}.
One especially noteworthy property appearing in the conjecture is $\mathcal{Z}$-stability. A $C^*$-algebra $A$ is called $\mathcal{Z}$-stable if it is isomorphic to the tensor product $A \otimes \mathcal{Z}$, where $\mathcal{Z}$ is the standard notation for the Jiang--Su algebra \cite{JiangSu}. Since $\mathcal{Z}$ has unique trace and the same $K$-theory as $\mathbb{C}$, it can be thought of as an infinite-dimensional analogue of $\mathbb{C}$, and the Elliott invariant cannot distinguish between them. This carries over to tensor products, meaning that the Elliott invariant cannot distinguish a $C^*$-algebra $A$ from the tensor product $A \otimes \mathcal{Z}$.\footnote{This statement is certainly true according to the most recent ongoing approach to classification, where the order of the $K_0$-group that was previously included in the invariant is now left out. The statement remains true under some mild conditions when the order is included.} As shown by Toms \cite{Toms}, this is not a problem inherent only to the Elliott invariant, as he was able to produce systematic examples of simple, separable, nuclear $C^*$-algebras $A$ that could not be distinguished from $A \otimes \mathcal{Z}$ by any reasonably computable invariant, even though $A \ncong A \otimes \mathcal{Z}$. Knowing this, it is natural to restrict classification attempts to $\mathcal{Z}$-stable $C^*$-algebras. By the combined efforts of many people involved in the earlier mentioned references, it is now proved that $\mathcal{Z}$-stability is also sufficient, in the sense that classification is possible for simple, separable, nuclear, $\mathcal{Z}$-stable $C^*$-algebras in the UCT-class.

In the case of group actions, a similar phenomenon occurs. A sufficiently manageable invariant for the category of $G$-$C^*$-dynamical systems will usually not be able to distinguish the action $\alpha\colon G \acts A$ from the action $\alpha \otimes \op{id}_\mathcal{Z}\colon G \acts A \otimes \mathcal{Z}$. In particular, this is the case for invariants like the action induced on the tracial simplex, and equivariant $KK$-theory \cite{Kasparov}. So when it comes to classifying $G$-$C^*$-dynamical systems up to cocycle conjugacy (see Definition~\ref{def:cc}), for the same reasons as before, the only actions we can expect to classify are the equivariantly $\mathcal{Z}$-stable ones, i.e.\ actions that are cocycle conjugate to their tensor product with the trivial action on $\mathcal{Z}$. In order to clarify the scope of the classification program for amenable group actions, it is therefore important to investigate whether this property holds automatically for all actions of amenable groups acting on classifiable $C^*$-algebras or not.

 Unlike for ordinary $\mathcal{Z}$-stability, there is evidence in the literature that suggests equivariant $\mathcal{Z}$-stability could indeed be automatic for  actions of amenable groups on simple, separable, nuclear, $\mathcal{Z}$-stable $C^*$-algebras.\footnote{Without amenability of the group this is not true in general, as is shown by natural counterexamples given in \cite{GardellaLupini}, based on the work of Jones \cite{Jones83}.}\ For the traceless case equivariant $\mathcal{Z}$-stability follows from equivariant Kirchberg--Phillips results by Izumi--Matui for poly-$\Z$ groups \cite{IzumiMatui,Izumi12,IzumiMatui19,IzumiMatui20}, Goldstein--Izumi for finite groups \cite{GoldsteinIzumi} and Szab\'o for general amenable groups \cite{EquivariantKP} that imply automatic absorption of the trivial action on $\mathcal{O}_\infty$. For actions on finite classifiable $C^*$-algebras, equivariant $\mathcal{Z}$-stability results have been obtained in \cite{MatuiSatoZstability,MatuiSatoZstabilityII,Sato19,GardellaHirshberg, GardellaHirshbergVaccaro}, all relying on additional assumptions about the tracial simplex. For example, the most recent and general of these results by Gardella--Hirshberg--Vaccaro \cite{GardellaHirshberg, GardellaHirshbergVaccaro} shows that actions of amenable groups on unital classifiable $C^*$-algebras are automatically $\mathcal{Z}$-stable under the additional assumptions that the tracial state space $T(A)$ of the $C^*$-algebra is a non-empty Bauer simplex\footnote{A \emph{Bauer simplex} is  a Choquet simplex with compact extremal boundary.}, the extremal boundary $\partial_e T(A)$ has finite covering dimension, and the action that $G$ induces on $\partial_e T(A)$ factors through a finite quotient group.\footnote{The exact assumptions on the induced action $G \acts \partial_e T(A)$ in \cite{GardellaHirshberg, GardellaHirshbergVaccaro} are a bit more subtle and differ slightly between the two articles.} 
 
The restrictions on the tracial simplex in this kind of results stem from the limitations of the current tools that are available to tackle these types of problems. To understand this a bit better, one needs to look at the nature of the problem. As explained in Section \ref{section:reduct_1}, in the presence of traces an equivariant version of a well-known argument due to Matui--Sato \cite{MatuiSatoStrictComparison} shows that for an action $\alpha\colon G \acts A$ on a classifiable $C^*$-algebra $A$, equivariant $\mathcal{Z}$-stability is equivalent to the existence of a unital $*$-homomorphisms $M_n \rightarrow (A^\omega \cap A')^{\alpha^\omega}$ for all $n \in \N$ into the fixed point algebra of the action that $\alpha$ induces on the uniform tracial central sequence algebra (see Definition \ref{definition:ultrapowers}). The uniform tracial ultrapower $A^\omega$ is a central object of interest for finite $C^*$-algebras that can be considered as an analogue of ultrapowers for tracial von Neumann algebras and takes into account all traces simultaneously. 
The properties of this object ensure that the existence of the $*$-homomorphism above can be checked by finding certain elements in $A$ that satisfy a finite number of conditions approximately, such as being central and fixed by $\alpha$ in the uniform tracial 2-norm. In this way, the equivariant $\mathcal{Z}$-stability problem turns into a tracial problem that can be solved by a `local-to-global' approach. The terminology refers to the desired `global' result that must hold uniformly over all traces, which one aims to derive from known `local' results, i.e.\ results that hold only with respect to one trace at the time by virtue of results in von Neumann algebra theory. In \cite{GardellaHirshberg}, Gardella--Hirshberg used the framework of $W^*$-bundles (see Section \ref{section:W*-bundles}) to make this approach work and obtain their result, but this only works when the tracial state space is a Bauer simplex. The further assumptions on the tracial state space are needed to make their local-to-global argument work. In this paper we still use the same $W^*$-bundle techniques to build further on their work, and are able to improve on their result for single automorphisms by eliminating the requirement that the action induced on $\partial_e T(A)$ must factor through a finite quotient group: 
 
\begin{theoremintro}\label{theorem:intro_main1}
Let $A$ be an algebraically simple, separable, nuclear, $\mathcal{Z}$-stable $C^*$-algebra. Suppose that $T(A)$ is a non-empty Bauer simplex and that $\partial_e T(A)$ is finite-dimensional. Then any automorphism $\alpha \in \op{Aut}(A)$ satisfies $\alpha \simeq_{\mathrm{cc}} \alpha \otimes \op{id}_\mathcal{Z}$.
\end{theoremintro}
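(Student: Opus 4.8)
The plan is to reduce the statement to a purely tracial existence problem and then solve that problem by exploiting the $W^*$-bundle structure that the Bauer simplex hypothesis provides. By the equivariant Matui--Sato argument outlined in Section~\ref{section:reduct_1}, the relation $\alpha \simeq_{\mathrm{cc}} \alpha \otimes \op{id}_\mathcal{Z}$ follows once I produce a unital $*$-homomorphism $M_2 \to (A^\omega \cap A')^{\alpha^\omega}$ into the fixed point algebra of the automorphism induced by $\alpha$ on the uniform tracial central sequence algebra (Definition~\ref{definition:ultrapowers}). So the whole task becomes: for every finite $F \ssubset A$ and every $\e>0$, build a system of matrix units $(e_{ij})_{i,j=1}^2$ in $A$ that is approximately central for $F$ and approximately fixed by $\alpha$, both measured in the uniform tracial $2$-norm. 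Passing to the ultrapower then upgrades these approximate relations to the exact ones defining the desired homomorphism.

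To organize the construction I would move to the $W^*$-bundle picture of Section~\ref{section:W*-bundles}. Since $T(A)$ is a Bauer simplex, the uniform tracial completion of $A$ is the algebra of continuous sections of a $W^*$-bundle $\mathcal{M}$ over the compact, finite-dimensional base $X=\partial_e T(A)$, and by nuclearity and simplicity of $A$ each fibre $\mathcal{M}_x$ is the hyperfinite $\mathrm{II}_1$ factor $R$. The automorphism $\alpha$ descends to a bundle automorphism $\bar\alpha$ covering the homeomorphism $\hat\alpha$ of $X$ that is dual to the action on traces, and the target algebra $(A^\omega \cap A')^{\alpha^\omega}$ is governed by $\bar\alpha$-equivariant, $\hat\alpha$-compatible central sequences of sections. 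The local input is then standard von Neumann algebra theory: in each fibre $R^\omega \cap R'$ contains a copy of $M_2$, and since a single automorphism of $R$ tensorially absorbs the trivial one, the fixed-point relative commutant $(R^\omega \cap R')^{\bar\alpha_x^\omega}$ at an $\hat\alpha$-fixed point is still large. Fibrewise the problem is therefore always solvable.

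The crux --- and the step where I expect the real difficulty --- is the \emph{local-to-global} passage: assembling the fibrewise data into genuine continuous sections whose $\alpha$-defect is uniformly small in $2$-norm across \emph{all} of $X$ at once. This is precisely where Gardella--Hirshberg invoked finiteness of the orbits: with finite orbits one can work over the Hausdorff orbit space $X/\hat\alpha$ and glue there. In the present generality $\hat\alpha$ may have infinite orbits and $X/\hat\alpha$ need no longer be Hausdorff, so a genuinely dynamical argument is required. Note also that the uniform $2$-norm is a supremum over fibres rather than an integral, so it is not enough to make a bad set small in measure; the matrix units must be approximately $\bar\alpha$-invariant at \emph{every} fibre simultaneously.

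To overcome this I would run a Rokhlin-tower argument for the single homeomorphism $\hat\alpha$, transported to the bundle and controlled by $\dim X<\infty$ in place of finiteness of orbits. On the aperiodic part of $X$ one constructs a tower $U,\hat\alpha(U),\dots,\hat\alpha^{n-1}(U)$ of large height $n$ together with a continuous choice of matrix units over its base; finite covering dimension of $X$ is exactly what permits this continuous selection without a topological obstruction. Transporting the matrix units up the tower by $\bar\alpha$ and interpolating between consecutive levels along a path in the unitary group of $R$ makes the per-level change of order $1/n$, so the resulting $\alpha$-defect is of order $1/n$ in the uniform $2$-norm, uniformly over all fibres, hence negligible once $n$ is large; the periodic and fixed part of $X$, where tall towers degenerate, is handled by the fibrewise finite-orbit method, with finite-dimensionality again ensuring a continuous transition between the two regions. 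The main obstacle throughout is maintaining this uniform $2$-norm control over all extremal traces while the tower heights and fibrewise data vary along the base, and finite-dimensionality of $\partial_e T(A)$ is precisely the hypothesis that makes such uniform control attainable.
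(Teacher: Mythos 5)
Your overall skeleton coincides with the paper's: you reduce, via the equivariant Matui--Sato/property (SI) argument of Section~\ref{section:reduct_1} (Theorem~\ref{theorem:reduction_tracial_property}), to producing a unital $*$-homomorphism $M_2 \rightarrow (A^\omega\cap A')^{\alpha^\omega}$; you pass to the trivial $W^*$-bundle $\bar{A}^u \cong C_\sigma(\partial_e T(A),\mathcal{R})$; and you split the base space into a small-period part, handled by the Gardella--Hirshberg finite-orbit technology, and a large-period part, handled by Rokhlin towers for the induced homeomorphism. This is exactly the decomposition behind Theorem~\ref{theorem:main_technical} and Section~\ref{section:main}.

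The genuine gap lies in the core of your large-period step. You propose a \emph{single} open tower $U, \hat\alpha(U), \dots, \hat\alpha^{n-1}(U)$ covering the aperiodic part, with a continuous choice of matrix units over its base, transported up the tower and patched by a path of unitaries. No such tower exists in general: the levels of a tower are pairwise disjoint open sets, so a single tower of height $n\geq 2$ can never cover a connected piece of the base --- already for an irrational rotation of $\partial_e T(A)\cong S^1$ this is impossible, and finite covering dimension does not remove the obstruction. What finite dimension actually buys is the Hirshberg--Wu Rokhlin lemma (Lemma~\ref{lemma:Rokhlin}): the large-period part can be covered by $2d+3$ \emph{overlapping} towers, realized as functions $\mu_j^{(l)}\in C_c(X')$ rather than sets. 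Once several overlapping towers are needed, transporting matrix units up each tower cannot yield globally defined matrix units; what each color $l$ produces is only a c.p.c.\ order zero map, namely the weighted average $\psi_l = \sum_j \mu_j^{(l)}\,\gamma^j\circ\phi_l\circ\op{Ad}(v)^{-j}$ of Lemma~\ref{lemma:large_period}, and the whole system only satisfies $\sum_l\psi_l(1)\approx 1$ with approximately commuting ranges. Converting such a jointly unital family of order zero maps into a single unital equivariant embedding of $M_2$ is a nontrivial further idea --- the join machinery of Section~\ref{section:syst_order_zero} and Lemma~\ref{lemma:join}, resting on the equivariant embedding $(M_n,\op{Ad}(\nu))\hookrightarrow (M_n^{\star m},\op{Ad}(\nu)^{\star m})$ built from the flip unitary --- and it is entirely absent from your proposal; without it the construction stalls before producing the homomorphism required by Theorem~\ref{theorem:reduction_tracial_property}. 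Relatedly, the role you assign to $\dim \partial_e T(A)<\infty$ (``continuous selection of matrix units without topological obstruction'') is not where the hypothesis enters: it bounds the number of Rokhlin colors, makes Ozawa's trivialization $\bar{A}^u\cong C_\sigma(\partial_e T(A),\mathcal{R})$ available, and is required in the Gardella--Hirshberg fiberwise-to-global theorem (Theorem~\ref{theorem:GH_rewrite}) that you invoke on the small-period part.
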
 
 
For a unital $C^*$-algebra $A$ the algebraic simplicity assumption appearing in Theorem \ref{theorem:intro_main1} is automatically satisfied if $A$ is simple, but this is not necessarily the case for non-unital $A$. Algebraic simplicity guarantees there are no unbounded tracial weights, and together with the other assumptions on the tracial state space this allows us to carry out the argument using the same framework as in the unital case with minimal adaptions and no conceptual differences. For the moment it is unclear how to prove Theorem \ref{theorem:intro_main1} without this assumption, since it is unclear what the correct framework would be in the presence of unbounded traces. It is, however, important to realize that this really entails a loss of generality. In the dynamics-free context, one can sometimes reduce questions about structural properties of $C^*$-algebras to the algebraically simple case by considering carefully chosen hereditary subalgebras, as is done in \cite{CastillejosEvington}. However, this type of reduction argument cannot work in general for group actions. For example, when considering trace-scaling automorphisms on stable $C^*$-algebras it is not always possible to find a hereditary substructure resembling an automorphism on an algebraically simple $C^*$-algebra. 
 
In order to prove Theorem \ref{theorem:intro_main1} relying on the work of Gardella--Hirshberg, one needs to find a way to handle possibly infinite orbits for the induced action on the trace space. As an intermediate result, we prove the theorem in case the induced action is free. In fact, we prove something stronger (see Theorem \ref{theorem:main_result_free}) from which the next result follows. 
\begin{theoremintro}\label{theorem:intro_free}
Let $A$ be an algebraically simple, separable, nuclear, $\mathcal{Z}$-stable $C^*$-algebra. Suppose that $T(A)$ is a non-empty Bauer simplex and that $\partial_e T(A)$ is finite-dimensional. Take a countable discrete group $G$ for which all finitely generated subgroups are virtually nilpotent and take an action $\alpha\colon G \acts A$. If the action that $\alpha$ induces on $\partial_e T(A)$ is free, then  $\alpha \simeq_{\mathrm{cc}} \alpha \otimes \op{id}_\mathcal{Z}$.
\end{theoremintro}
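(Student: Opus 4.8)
The plan is to reduce equivariant $\mathcal{Z}$-stability to the existence of an equivariant unital $*$-homomorphism $M_2 \to (A^\omega \cap A')^{\alpha^\omega}$ via the Matui--Sato-type argument mentioned in the excerpt, and then to construct such a homomorphism using the $W^*$-bundle machinery. Since $T(A)$ is a Bauer simplex with $\partial_e T(A)$ compact and finite-dimensional, the uniform tracial ultrapower $A^\omega$ is governed by a continuous $W^*$-bundle over $\partial_e T(A)$, and the induced action $\alpha$ gives a $\Z$-action (more generally a $G$-action) on this bundle whose action on the base space $\partial_e T(A)$ is assumed to be free. The freeness of the base action is what distinguishes this setting from the finite-orbit case of Gardella--Hirshberg: instead of being able to decompose the base into finitely many invariant pieces, I will need to exploit freeness together with the hypothesis that every finitely generated subgroup of $G$ is virtually nilpotent, which ensures the relevant dynamical system has the tameness (polynomial growth / amenability with good Følner structure) needed to produce approximately central, approximately invariant towers.

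\medskip

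First I would set up the $W^*$-bundle $\mathcal{M}(A)$ over the compact metric space $X = \partial_e T(A)$ and identify the fixed-point algebra $(A^\omega \cap A')^{\alpha^\omega}$ in bundle terms, reducing the problem to producing, fibrewise and uniformly over $X$, a copy of $M_2$ that is approximately fixed by the bundle automorphism induced by $\alpha$. Second, using that the base action $G \acts X$ is free and that $X$ is finite-dimensional, I would invoke a Rokhlin-type or marker-set argument on $X$: freeness of a virtually nilpotent group action on a finite-dimensional compact space yields, for any finite $F \subset G$ and tolerance, a partition of $X$ (up to small error) into subsets that are $F$-disjoint from their translates, allowing the base to be tiled by approximately invariant castles. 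Third, I would use the finite covering dimension of $X$ to lift this combinatorial tiling to a genuine finite-dimensional partition of unity subordinate to the castle, so that one can patch together the locally constructed $M_2$-copies (available fibrewise by $\mathcal{Z}$-stability of each tracial von Neumann completion) into a global approximately-central, approximately-invariant matrix embedding. The strict comparison and uniform property provided by $\mathcal{Z}$-stability of $A$ guarantees that the fibre von Neumann algebras are McDuff, so each fibre admits the required $M_2$, and the local-to-global principle for $W^*$-bundles upgrades the fibrewise data to the bundle.

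\medskip

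The main obstacle I expect is precisely the passage from freeness of the base action to a usable Rokhlin tower on the bundle when orbits may be infinite. In the finite-orbit case one can simply average over each finite orbit, but with infinite orbits one must produce approximately invariant Følner-type tilings of $X$ that respect the finite covering dimension, and then ensure these tilings can be made compatible with the $*$-homomorphism structure in the fibres simultaneously over all of $X$. Concretely, the difficulty is controlling the boundary effects of the tiling uniformly across the base while keeping the tower maps approximately multiplicative in the uniform $2$-norm; this is where the virtual nilpotence hypothesis does real work, since polynomial growth supplies Følner sets with controllable boundary and (via the dimension of $X$) a uniform bound on the multiplicity needed to colour the tiles. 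Once the coloured, approximately invariant tiling is in hand, assembling the $M_2$-embedding is routine bookkeeping with the $W^*$-bundle functional calculus, and equivariant $\mathcal{Z}$-stability then follows from the reduction described in Section \ref{section:reduct_1}.
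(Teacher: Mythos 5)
Your overall architecture matches the paper's: reduce equivariant $\mathcal{Z}$-stability to a statement about the uniform tracial central sequence algebra via equivariant property (SI) (Theorem \ref{theorem:reduction_tracial_property}), pass to the trivial $W^*$-bundle $\bar{A}^u \cong C_\sigma(\partial_e T(A),\mathcal{R})$ of \cite{Ozawa}, and exploit freeness of the base action through the central inclusion of $C(\partial_e T(A))$ into the relative commutant of the bundle ultrapower. Where you diverge is in how freeness is converted into approximate invariance of the matrix embedding. You propose to construct marker sets, F{\o}lner-type tilings and coloured castles on $X = \partial_e T(A)$ by hand, and then average fibrewise copies of $M_2$ over the resulting towers; this is essentially the strategy the paper uses \emph{elsewhere}, namely in Lemma \ref{lemma:large_period} for the large-period part of a $\Z$-action, where the topological Rokhlin lemma of \cite{HirshbergWu} (Lemma \ref{lemma:Rokhlin}) supplies the towers and the assembly runs through Lemma \ref{lemma:join}. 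The paper's actual proof of this theorem (Theorem \ref{theorem:main_result_free}, via Theorem \ref{theorem:free_case}) is softer: it quotes \cite[Corollary 7.5 and Remark 8.7]{SzaboWuZacharias} to assert that the induced action on $C(X)$ already has finite Rokhlin dimension with commuting towers, builds a separable, $\mathcal{Q}$-stable, invariant subalgebra $B \subset (\bar{A}^u)^\omega \cap (\bar{A}^u)'$ containing $C(X)$ in its center (so the restricted action $\beta$ inherits finite Rokhlin dimension), and then invokes the model-action absorption theorem \cite[Corollary 5.1]{RokhlinDimension} to get $\beta \simeq_{\mathrm{cc}} \beta \otimes \left(\otimes_{\N} \op{Ad}(\nu)\right)$, which yields the equivariant matrix embedding after a reindexation. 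Your route, if completed, is more self-contained and would treat finite and infinite groups uniformly; the paper's route avoids redoing the substantial tiling theory for virtually nilpotent groups (that is precisely the content of the quoted results of \cite{SzaboWuZacharias}), but in exchange must handle finite $G$ by a separate argument based on the Gardella--Hirshberg fiberwise theorem (Theorem \ref{theorem:GH_rewrite}), with an explicit orbit-permuting embedding.

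Three points in your sketch need more care. First, the final assembly is not ``routine bookkeeping with functional calculus'': passing from finitely many approximately equivariant, approximately pairwise commuting c.p.c.\ order zero maps that approximately sum to the unit to a genuine unital equivariant $*$-homomorphism requires the join construction ($M_n \hookrightarrow M_n^{\star m}$; Lemmas \ref{lemma:join_basic} and \ref{lemma:join}); this is known technology, but it is a real ingredient that must be invoked. Second, since $G$ is only countable, your tiling input exists only for its finitely generated subgroups, so the reduction to finitely generated subgroups (a reindexation/$\e$-test argument, as at the start of the paper's proof of Theorem \ref{theorem:main_result_free}) has to be made explicit; your quantification over finite $F \ssubset G$ is compatible with this, but as stated the polynomial-growth F{\o}lner argument does not literally apply to, say, locally finite $G$, where all the relevant subgroups are finite. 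Third, a small misattribution: the fibres are isomorphic to $\mathcal{R}$ (hence McDuff) because $A$ is nuclear, by \cite{Ozawa} together with Connes' theorem, not because of strict comparison or $\mathcal{Z}$-stability of $A$.
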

Together with Theorem \ref{theorem:intro_main1} above, this theorem provides further partial verification of the conjecture that equivariant $\mathcal{Z}$-stability is automatic for actions of amenable groups on classifiable $C^*$-algebras (cfr.\ \cite[Conjecture A]{EquivariantSI}). 
The proof of Theorem B relies on the results of \cite{SzaboWuZacharias, RokhlinDimension}. A crucial ingredient is the existence of an equivariant inclusion $C(\partial_e T(A)) \subset A^\omega\cap A'$, which allows one to perform an averaging argument using Rokhlin towers originating from the system $G \acts C(\partial_e T(A))$ to take a map $M_n \rightarrow A^\omega\cap A'$ and construct from it a map $M_n \rightarrow (A^\omega \cap A')^{\alpha^\omega}$. 

If one is interested in $\mathcal{Z}$-stability of the crossed product $A \rtimes_\alpha G$ (a weaker conclusion) instead of equivariant $\mathcal{Z}$-stability, then the recent work \cite{GGNV} shows that some of the assumptions in Theorem B above can be weakened. Taking inspiration from the present work, the authors of \cite{GGNV} show how the uniform Rohklin property of $G \acts \partial_eT(A)$ (first introduced in \cite{Niu}) allows one to deduce $\mathcal{Z}$-stability of $A \rtimes_\alpha G$ in the context of Theorem B, but for general countable amenable groups, without needing the finite-dimensionality assumption on $\partial_e T(A)$ in case of free minimal $\Z^d$-actions.

In the general case of single automorphisms, combining the ideas from the proof of Theorem B with the results of Gardella--Hirshberg in \cite{GardellaHirshberg} allows us to derive Theorem \ref{theorem:intro_main1}. By an observation that seems to originate in \cite{HirshbergWinterZacharias}, instead of finding one unital $*$-homomorphism $M_n \rightarrow (A^\omega \cap A')^{\alpha^\omega}$ it suffices to find two pairwise commuting c.p.c.\ order zero maps $\Psi_i\colon M_n \rightarrow (A^\omega \cap A')^{\alpha^\omega}$ for $i \in {1,2}$ that are jointly unital, in the sense that $\Psi_1(1) + \Psi_2(1)= 1$. Alternatively, the approximate version of this statement implies that it suffices to find c.p.c.\ order zero maps $\psi_1$ and $\psi_2$ from $M_n$ into the uniform tracial closure $\bar{A}^u$ (see Definition \ref{definition:uniform_tracial_closure}) satisfying the above properties approximately. This uniform tracial closure $\bar{A}^u$ is the $C^*$-algebra obtained from $A$ by adding limit points of bounded sequences that are Cauchy in the uniform tracial 2-norm. A lot of information about $A$ gets lost in this process, such as the $K$-theory, but $\bar{A}^u$ still captures the tracial properties of $A$. Moreover, when $A$ satisfies the assumptions in Theorem \ref{theorem:intro_main1}, the work of Ozawa \cite{Ozawa} shows that $\bar{A}^u$ has the structure of a trivial $W^*$-bundle with base space $\partial_e T(A)$ and fibers isomorphic to the hyperfinite II$_1$ factor $\mathcal{R}$. More specifically, as a $W^*$-bundle it is isomorphic to $C_\sigma(\partial_e T(A), \mathcal{R})$, the norm bounded and strong operator continuous functions from $\partial_e T(A)$ to $\mathcal{R}$. Thus, what really lies at the core of proving Theorem \ref{theorem:intro_main1}, is solving a  problem about automorphisms on the trivial bundle $\bar{A}^u \cong C_\sigma(\partial_e T(A), \mathcal{R})$. In order to obtain our result we take advantage of the specific way in which $\Z$ acts on the base space $\partial_e T(A)$, which allows us to divide this extremal boundary into two subsets: a set $X_1$ where all points have orbit size at most some fixed $N \in \N$, and its complement $X_2 = \partial_e T(A)\setminus X_1$. We will construct $\psi_1$ and $\psi_2$ such that they are approximately unital on the fibers corresponding to $X_1$ and $X_2$, respectively. This idea of dividing the space into a part with small and large periods to merge two different methods of proof draws inspiration from the proof of the main result in \cite{HirshbergWu}. The map $\psi_1$ can be obtained from first restricting the bundle $\bar{A}^u$ to $X_1$, then using the results of Gardella--Hirshberg (since the action induced on the base space of this restricted bundle has orbits uniformly bounded in size) and lifting this back to a c.p.c.\ order zero map into $\bar{A}^u$. For the map $\psi_2$ we apply a Rokhlin-type result from topological dynamics to the system $\Z \acts X_2$. Via the inclusion $C_0(X_2) \subset \mathcal{Z}(\bar{A}^u)$ into the center of the $W^*$-bundle we are able to obtain Rokhlin towers which allow us to average an approximate $*$-homomorphism $M_n \rightarrow \bar{A}^u$ in such a way that it lands approximately in the fixed point algebra of $\alpha$, at least if $N$ was chosen large enough. 

With some extra work, the same methods can be used to prove that  strongly outer $\Z$-actions have finite Rokhlin dimension with commuting towers. Rokhlin dimension is one of the various Rokhlin-type properties for group actions studied by $C^*$-algebraists, motivated by the importance of the Rokhlin lemma in ergodic theory and Rokhlin-type results in the classification of amenable group actions on von Neumann algebras. The Rokhlin property for $C^*$-algebras first appeared in the work of Herman--Jones \cite{HermanJones} and Herman--Ocneanu \cite{HermanOcneanu} about actions of cyclic groups on UHF-algebras, and its applications to classification for single automorphisms were later studied in greater generality by Kishimoto and various collaborators \cite{BKRS, Kishimoto95,BEK,Kishimoto96,EvansKishimoto,Kishimoto98,Kishimoto98b,EEK,BratelliKishimoto,Nakamura}.   
 The finite group case was studied by Izumi in \cite{IzumiI,IzumiII} (for unital $C^*$-algebras; the non-unital case was studied for example in \cite{GardellaSantiago}). As his research shows, aside from other possible obstructions like a lack of projections in the $C^*$-algebra, in the case of finite groups there are obstructions of K-theoretic nature that withhold a lot of actions from having the Rokhlin property. This led researchers to study two types of weakened versions of the property:

The first is the tracial Rokhlin property, 
 introduced by Phillips for finite groups in \cite{Phillips}, which allows that the sum of the projections appearing in the property has a small leftover in trace. This property has been used among other examples in the work of Phillips and his collaborators where they studied the structure of crossed products of irrational rotation algebras by certain actions of cyclic groups \cite{ELPW}. Since this version of the property is still of no use in the absence of non-trivial projections, a weakened version of this property, called the weak (tracial) Rokhlin property, was considered in \cite{Sato, HirshbergOrovitz, MatuiSatoZstability,MatuiSatoZstabilityII, GardellaHirshbergSantiago,GardellaHirshbergVaccaro}. In particular, its relation with strong outerness was studied further by Matui--Sato \cite{MatuiSatoZstability,MatuiSatoZstabilityII} and Gardella--Hirshberg--Vaccaro \cite{GardellaHirshbergVaccaro}.
 
The second is the notion of Rokhlin dimension, originally introduced by Hirshberg, Winter and Zacharias in \cite{HirshbergWinterZacharias} for finite groups and actions of $\Z$. Their definition allows for multiple towers instead of just the one as in the Rokhlin property, making the property of having finite Rokhlin dimension more common and flexible to work with. The definition was later extended to residually finite groups by Szab\'o--Wu--Zacharias \cite{SzaboWuZacharias}. In \cite{Liao,Liao17}, Liao showed that strongly outer $\Z^m$-actions on simple, separable, nuclear, $\mathcal{Z}$-stable $C^*$algebras have finite Rokhlin dimension under the assumption that the tracial state space is a Bauer simplex with a finite-dimensional extremal boundary that is fixed by the action. This was generalized by Gardella--Hirshberg--Vaccaro \cite{GardellaHirshbergVaccaro} to actions of amenable groups, keeping the same assumptions on the trace simplex except that the induced action on the extremal boundary is allowed to have finite orbits with uniformly bounded cardinality.

 From the beginning, Hirshberg, Winter and Zacharias also introduced a stronger version of the notion, namely with commuting towers. Here we prove the following result, which in the unique trace case is a special case of \cite[Theorem 2.14]{ActionsTorsionFree}:
\begin{theoremintro}\label{theorem:intro_main2}
Let $A$ be an algebraically simple, separable, nuclear, $\mathcal{Z}$-stable $C^*$-algebra. Suppose that $T(A)$ is a non-empty Bauer simplex and that $\partial_e T(A)$ is finite-dimensional. Then any strongly outer action $\alpha\colon \Z \acts A$ satisfies $\op{dim}_\mathrm{Rok}^c(\alpha) \leq 2$.
\end{theoremintro}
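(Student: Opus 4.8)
The plan is to reuse, almost verbatim, the machinery developed for Theorem~\ref{theorem:intro_main1}, upgrading the two jointly unital commuting c.p.c.\ order zero maps $\psi_1,\psi_2\colon M_2\to\bar A^u$ to genuine Rokhlin tower systems of arbitrary height. First I would record, as in Section~\ref{section:reduct_1}, a characterisation of $\op{dim}_{\mathrm{Rok}}^c(\alpha)\le 2$ purely in terms of the uniform tracial central sequence algebra: it suffices to produce, for every $p\in\N$ and every $\e>0$, positive contractions $f^{(l)}_j\in A^\omega\cap A'$ indexed by colours $l\in\{0,1,2\}$ and $j\in\Z/p\Z$ such that $\alpha^\omega(f^{(l)}_j)=f^{(l)}_{j+1}$, such that $f^{(l)}_jf^{(l)}_{j'}=0$ for $j\ne j'$, such that all the $f^{(l)}_j$ commute with one another, and such that $\sum_{l,j}f^{(l)}_j=1$. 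By the $W^*$-bundle techniques and the equivariant inclusion $C(\partial_e T(A))\subset A^\omega\cap A'$, this reduces to constructing, up to small error in the uniform $2$-norm, three commuting $\bar\alpha$-cyclic towers of positive contractions inside the trivial bundle $\bar A^u\cong C_\sigma(X,\mathcal R)$, where $X=\partial_e T(A)$ and $\bar\alpha$ is the induced automorphism.

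As in the proof of Theorem~\ref{theorem:intro_main1}, I would fix a large $N\in\N$ (depending on $p$) and split $X$ into the closed $\bar\alpha$-invariant set $X_1$ of points whose orbit has size at most $N$ and its open $\bar\alpha$-invariant complement $X_2$, together with a central cutoff function separating the two. On the finite-orbit part $X_1$ I would invoke the Gardella--Hirshberg results: since $\bar\alpha$ restricted to $X_1$ has finite orbits and $\bar\alpha$ is outer on every fibre by strong outerness, their finite-orbit Rokhlin machinery produces height-$p$ towers that are permuted cyclically and live, continuously over $X_1$, in the fibres $\mathcal R$. On the large-orbit part $X_2$ the fibrewise von Neumann Rokhlin lemma is no help, so instead I would apply a topological Rokhlin lemma to the system $\Z\acts X_2$, valid because $X$ is finite-dimensional and the orbits on $X_2$ are long; following the scheme used for Theorem~B and the results of \cite{SzaboWuZacharias, RokhlinDimension} this yields Rokhlin towers living entirely in $C_0(X_2)\subset\mathcal Z(\bar A^u)$. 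Being central, these towers automatically commute with everything, so the commuting-towers requirement is free on $X_2$.

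Finally I would assemble the two constructions. Weighting the fibrewise towers on $X_1$ and the central towers on $X_2$ by the chosen central cutoff makes their supports essentially disjoint in $X$, so the combined family commutes and its colours may be amalgamated into at most three, yielding $\op{dim}_{\mathrm{Rok}}^c(\alpha)\le 2$; taking $N$ large relative to $p$ forces the contribution of the transition region to be negligible in the uniform $2$-norm. I expect the main obstacle to be exactly this gluing across the boundary between $X_1$ and $X_2$: one must arrange that the cyclic-shift relation $\bar\alpha(f^{(l)}_j)=f^{(l)}_{j+1}$ and the normalisation $\sum_{l,j}f^{(l)}_j=1$ survive the patching while keeping the towers commuting and the colour count down to three, and one must generalise the finite-orbit construction of \cite{GardellaHirshberg} from the jointly unital $M_2$-picture to honest height-$p$ towers. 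Controlling all these errors uniformly over every trace simultaneously, which is precisely what the $W^*$-bundle formalism is designed to do, is where the real work lies.
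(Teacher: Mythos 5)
Your geometric skeleton --- split $\partial_e T(A)$ into a small-period part $X_1$ and a large-period part $X_2$, use Gardella--Hirshberg on $X_1$, a topological Rokhlin lemma on $X_2$, and glue in the uniform $2$-norm --- is indeed the paper's strategy. But you apply it to the wrong algebraic object: you try to build \emph{towers} (copies of $C(\Z/p)$ with the shift) directly, whereas the paper builds \emph{matrix absorption}, i.e.\ unital equivariant $*$-homomorphisms $(M_n,\op{Ad}(\nu))\rightarrow (A^\omega\cap A',\alpha^\omega)$ for arbitrary unitary representations $\nu$ (Theorem \ref{theorem:main}), and only converts these into towers at the very last step (Theorem \ref{theorem:reduction_rokhlin_dimension}). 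This difference is not cosmetic; it is exactly what makes the bound $\op{dim}^c_{\op{Rok}}(\alpha)\leq 2$ attainable, and your proposal breaks at the two points where the distinction matters. First, your opening ``characterisation'' is unjustified: $\op{dim}^c_{\op{Rok}}$ is defined via $F_\omega(A)$, the \emph{norm} central sequence algebra, while your towers live in the uniform tracial central sequence algebra $A^\omega\cap A'$. Lifting along the $\sigma$-ideal $\mathcal{J}_A$ only gives towers whose sum is $1$ modulo $\mathcal{J}_A$, i.e.\ in trace; repairing this to an exact partition of unity in $F_\omega(A)$ is precisely where equivariant property (SI) enters, and the known (SI) mechanism (Matui--Sato \cite{MatuiSatoStrictComparison}, here Theorem \ref{theorem:reduction_rokhlin_dimension} via the universal dimension drop algebra $Z^U_{N,N+1}$ and \cite[Section 2]{ActionsTorsionFree}) needs the matrix structure $\phi(e_{1,1})$, $s^*s=1-\phi(1)$, $\phi(e_{1,1})s=s$; it does not run on bare commutative tower data. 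You never invoke (SI) at all.

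Second, and more fundamentally, your final step ``its colours may be amalgamated into at most three'' has no mechanism behind it, and none can exist at the level of towers. The topological Rokhlin lemma (\cite[Lemma 4.2]{HirshbergWu}, used in Lemma \ref{lemma:large_period}) inherently produces $2d+3$ colours, $d=\dim\partial_e T(A)$, so together with the $X_1$-part you hold $2d+4$ commuting approximate towers; merging commuting jointly-unital towers into fewer towers is obstructed: for $p\geq 2$ there is no unital equivariant $*$-homomorphism from $\left(C(\Z/p),\text{shift}\right)$ into the join $C(\Z/p)\star C(\Z/p)$ with the diagonal shift, since the spectrum of the join is the connected space $\Z/p\star\Z/p$, so any unital $*$-homomorphism corresponds to a constant map to the discrete space $\Z/p$, which cannot be equivariant for a free shift. (This is the whole reason Rokhlin \emph{dimension} is weaker than the Rokhlin property.) The paper evades this because for matrix algebras the analogous merging \emph{is} possible: Lemma \ref{lemma:join_basic} embeds $(M_n,\op{Ad}(\nu))$ equivariantly into $(M_n^{\star m},\op{Ad}(\nu)^{\star m})$ using the path $\exp(ita)$ implementing the flip, so the $2d+4$ approximately commuting order zero maps of Lemmas \ref{lemma:large_period} and Corollary \ref{corollary:small_periods} can be fused by Lemma \ref{lemma:join} into a single exact embedding of $(M_n,\op{Ad}(\nu))$, with the dimension-dependence of the colour count disappearing there; the final count of three colours then comes out of the $Z^U_{N,N+1}$-construction, independently of $d$. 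Your route, even if the gluing were carried out, would at best yield a Rokhlin dimension bound growing with $\dim\partial_e T(A)$, not the asserted bound $2$. To repair the proposal you would have to abandon towers as the carrier of the bundle argument and redo it for $(M_n,\op{Ad}(\nu))$ --- which is the paper's proof.
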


The link with Theorem \ref{theorem:intro_main1} becomes apparent when one realizes that in presence of equivariant property (SI), having finite Rokhlin dimension can also be translated to a more tractable tracial statement, see Section \ref{section:reduct_2}. More precisely, in order to show that an action $\alpha\colon \Z \acts A$ has finite Rokhlin dimension it suffices to prove that for each unitary representation $\nu\colon \Z \rightarrow \mathcal{U}(M_n)$ there exists a unital equivariant embedding
$(M_n, \op{Ad}(\nu)) \rightarrow (A^\omega \cap A', \alpha^\omega)$. The strategy to obtain these embeddings is completely similar to the strategy explained above for finding maps $M_n \rightarrow (A^\omega \cap A')^{\alpha^\omega}$. From the results of \cite{RokhlinDimension}, which established the connection between finite Rokhlin dimension with commuting towers and the absorption of strongly self-absorbing model actions, we obtain the following corollary: 
\begin{corollaryintro}\label{corollary:intro}
Let $A$ be an algebraically simple, separable, nuclear, $\mathcal{Z}$-stable $C^*$-algebra. Suppose that $T(A)$ is a non-empty Bauer simplex and that $\partial_e T(A)$ is finite-dimensional. Let $\mathcal{D}$ be a strongly self-absorbing $C^*$-algebra such that $A \cong A \otimes \mathcal{D}$. Then for any strongly outer action $\alpha\colon \Z \acts A$ and any $\delta\colon \Z \acts \mathcal{D}$ it holds that $\alpha \simeq_{\mathrm{cc}} \alpha \otimes \delta$.
\end{corollaryintro}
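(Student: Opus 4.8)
The plan is to derive Corollary C directly from Theorem C by invoking the abstract absorption machinery of \cite{RokhlinDimension}. The whole point of Theorem C is that the action $\alpha: \Z \acts A$ has finite Rokhlin dimension with commuting towers, and the main result of \cite{RokhlinDimension} establishes precisely the equivalence between this property and the tensorial absorption of strongly self-absorbing model actions. So the argument is essentially a citation-driven reduction rather than a fresh construction. First I would recall that under the standing hypotheses---$A$ algebraically simple, separable, nuclear, $\mathcal{Z}$-stable with $T(A)$ a Bauer simplex and $\partial_e T(A)$ finite-dimensional---Theorem C applies to any strongly outer $\alpha: \Z \acts A$ and yields $\op{dim}_\mathrm{Rok}^c(\alpha) \leq 2 < \infty$.

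Next I would set up the strongly self-absorbing framework. We are given a strongly self-absorbing $C^*$-algebra $\mathcal{D}$ with $A \cong A \otimes \mathcal{D}$, and an arbitrary action $\delta: \Z \acts \mathcal{D}$. The key structural input from \cite{RokhlinDimension} is that a $\Z$-action with finite Rokhlin dimension with commuting towers on a separable, $\mathcal{D}$-stable $C^*$-algebra absorbs \emph{every} action on $\mathcal{D}$ of the form $\delta$, in the sense that $\alpha \simeq_{\mathrm{cc}} \alpha \otimes \delta$. The mechanism behind this is that finite Rokhlin dimension with commuting towers for $\Z$ is equivalent to the action being cocycle conjugate to its tensor product with the model $\Z$-action on the universal UHF (or appropriate strongly self-absorbing) building block, and a McDuff-type/intertwining argument then upgrades this to absorption of $\delta$. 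Since $A$ is already $\mathcal{D}$-stable by hypothesis, all the separability and regularity conditions needed to invoke this equivalence are in place.

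The final step is to assemble these pieces: Theorem C guarantees the commuting-tower Rokhlin dimension is finite, $\mathcal{D}$-stability of $A$ is assumed, and the absorption theorem of \cite{RokhlinDimension} then directly yields $\alpha \simeq_{\mathrm{cc}} \alpha \otimes \delta$ for every $\delta: \Z \acts \mathcal{D}$. I would write this out as a short paragraph checking that the hypotheses of the relevant theorem in \cite{RokhlinDimension} are met and then quoting its conclusion.

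\textbf{The main obstacle} I anticipate is not in the logical flow but in correctly matching hypotheses across the two frameworks: one must verify that the precise notion of ``finite Rokhlin dimension with commuting towers'' established in Theorem C coincides with the version used in \cite{RokhlinDimension}, and that their absorption theorem is stated for the generality needed here (a single automorphism, i.e.\ $G = \Z$, with $\mathcal{D}$ strongly self-absorbing and $\delta$ an arbitrary---not necessarily strongly self-absorbing model---action on $\mathcal{D}$). If their result is phrased only for strongly self-absorbing model actions $\delta$, a small additional argument would be needed to reduce the general $\delta$ to that case, perhaps using that any action on $\mathcal{D}$ is absorbed once one knows absorption of the relevant building-block action together with $\mathcal{D} \cong \mathcal{D} \otimes \mathcal{D}$. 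Apart from this bookkeeping, the corollary should follow immediately from Theorem C.
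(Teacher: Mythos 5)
Your first step coincides with the paper's: Theorem \ref{theorem:intro_main2} gives $\op{dim}^c_{\op{Rok}}(\alpha) \leq 2$, and then \cite[Corollary 5.1]{RokhlinDimension} (with \cite[Theorem 3.10]{SzaboWuZacharias} supplying one extra hypothesis; this is Theorem \ref{theorem:finite_Rokhlin_dimension} in the paper) yields absorption. But that absorption theorem applies only when $\delta$ is a \emph{strongly self-absorbing} action of $\Z$ on $\mathcal{D}$, whereas the corollary allows $\delta: \Z \acts \mathcal{D}$ to be completely arbitrary. Your second paragraph asserts that \cite{RokhlinDimension} gives absorption of \emph{every} action on $\mathcal{D}$; that is not what it proves, so the obstacle you flag at the end is real rather than hypothetical, and it is exactly where the remaining content of the corollary lies.

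The repair you sketch --- that absorption of a suitable building-block action ``together with $\mathcal{D} \cong \mathcal{D} \otimes \mathcal{D}$'' handles general $\delta$ --- does not work as a formal argument. Knowing $\alpha \simeq_{\mathrm{cc}} \alpha \otimes \delta^0$ for some strongly self-absorbing $\delta^0$ does not force $\delta^0$ to absorb every $\delta$: for instance $\op{id}_{\mathcal{Z}}$ is a strongly self-absorbing action, yet it absorbs no strongly outer $\delta: \Z \acts \mathcal{Z}$, since $\op{id}_{\mathcal{Z}} \otimes \delta$ is strongly outer while $\op{id}_{\mathcal{Z}}$ is not, and strong outerness is a cocycle conjugacy invariant. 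The paper's actual argument chooses the intermediary carefully and then invokes a genuine classification theorem: take $\delta^0$ to be the noncommutative Bernoulli shift on $\bigotimes_{\Z} \mathcal{D} \cong \mathcal{D}$, which is both strongly self-absorbing \emph{and strongly outer}; the absorption theorem gives $\alpha \simeq_{\mathrm{cc}} \alpha \otimes \delta^0$; since $\delta^0 \otimes \delta$ is again a strongly outer $\Z$-action on $\mathcal{D} \otimes \mathcal{D} \cong \mathcal{D}$, the uniqueness of strongly outer $\Z$-actions on $\mathcal{D}$ up to cocycle conjugacy (\cite[Corollary 3.3]{ActionsTorsionFree}) gives $\delta^0 \simeq_{\mathrm{cc}} \delta^0 \otimes \delta$, and chaining these equivalences yields $\alpha \simeq_{\mathrm{cc}} \alpha \otimes \delta^0 \simeq_{\mathrm{cc}} (\alpha \otimes \delta^0) \otimes \delta \simeq_{\mathrm{cc}} \alpha \otimes \delta$. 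That uniqueness result is a substantive external input, not bookkeeping, and it is the ingredient missing from your proposal.
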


A previous result of this type appeared in \cite[Theorem 3.1]{ActionsTorsionFree}, where this was proved for a more general class of torsion-free elementary amenable groups under the assumption that $A$ has unique trace. This was used to prove that for any group $G$ in this class and any strongly self-absorbing $C^*$-algebra $\mathcal{D}$, there exists a unique strongly outer $G$-action on $\mathcal{D}$ up to cocycle conjugacy \cite[Corollary 3.3]{ActionsTorsionFree}. The result obtained here may prove useful to further study the structural properties of more general strongly outer actions. By choosing the right model action, one can for example obtain that these actions have the weak Rokhlin property mentioned above. As suggested by the role played by the Rokhlin type lemma in the work of Ocneanu \cite{Ocneanu}, when an action has the weak Rokhlin property this could prove a useful tool to classify the induced action on the uniform tracial closure. Furthermore, the result could also serve as a basis for obtaining Rokhlin-like regularity properties for strongly outer actions of more general amenable groups in future work.
 
This paper is structured as follows: we start with a preliminary section to recall some of the basic concepts, particularly ultrapowers and the uniform tracial completion of a given $C^*$-algebra. In Sections \ref{section:reduct_1} and \ref{section:reduct_2} we show that in order to prove Theorems \ref{theorem:intro_main1} and \ref{theorem:intro_main2} it suffices to solve a more tractable tracial problem. In Section \ref{section:syst_order_zero} we explain how the problem can be reduced further to finding systems of order zero maps with pairwise commuting ranges. The major technical work is done in Section \ref{section:W*-bundles}, where we prove the main results in the more general and abstract $W^*$-bundle setting. As an application, we derive the main results as they are stated above in Section \ref{section:main}.   
\section{Preliminaries}

\begin{notation}Throughout this paper, we will use the following notations and conventions:
\begin{itemize}[itemsep=2pt]
\item $\omega$ denotes a fixed free ultrafilter on $\N$. 
\item $G$ denotes a countable discrete group.
\item $A$ and $B$ denote $C^*$-algebras. The symbols $\alpha$ and $\beta$ will be used to denote single automorphisms or actions on $C^*$-algebras.
\item $\mathcal{M}(A)$ denotes the multiplier algebra of the $C^*$-algebra $A$.
\item If $\alpha\colon G \acts A$ is an action, then $A^\alpha$ denotes the fixed-point algebra. The orbit of $x \in A$ will be denoted by $G \cdot x$. 
\item If $F$ is a finite subset inside another set $M$, we often denote this by $F \ssubset M$.
\item $\mathcal{Z}$ is the standard notation for the so-called Jiang--Su algebra \cite{JiangSu}. We will assume the reader is familiar with this $C^*$-algebra. 
\item We will also assume the reader is familiar with c.p.c.\ order zero maps, see \cite{TomsWinter}.
\end{itemize}
\end{notation}

\begin{remark}
In most statements in this paper, the relevant $C^*$-algebras are assumed to be algebraically simple. This is automatically the case for unital simple $C^*$-algebras, but not for non-unital ones. If $A$ is an algebraically simple $C^*$-algebra, it does not admit any non-trivial unbounded tracial weights on $A_+$. Therefore, it makes sense to only consider tracial states, which we will refer to simply as \emph{traces}. This also explains the reason behind our algebraic simplicity assumption, since in the presence of unbounded traces it is unclear what the correct framework to treat the problems studied in this paper would be.  The collection of all traces will be denoted by $T(A)$ and will be equipped with the weak-* topology it gets from $A^*$.  
Furthermore we will always assume $T(A)$ to be compact. Again, this assumption is redundant for unital $C^*$-algebras, but is a genuine assumption in the non-unital case. Compactness of $T(A)$ guarantees the tracial state space has the structure of a Choquet simplex. We will denote the extremal boundary by $\partial_eT(A)$.
\end{remark}

We also recall some necessary definitions:

\begin{definition}\label{def:cc}
Let $\alpha\colon G \acts A$ be an action of a countable discrete group on a $C^*$-algebra.
\begin{enumerate}
\item A map $w\colon G \rightarrow \mathcal{U}(\mathcal{M}(A))$ is called an \emph{$\alpha$-1-cocycle}, if it satisfies $w_g \alpha_g(w_h) =w_{gh}$ for all $g, h \in G$. In this case, it induces a new action $\alpha^w\colon G \acts A$ defined by $\alpha_g^w = \op{Ad}(w_g) \circ \alpha_g$. This action is called a \emph{cocycle perturbation} of $\alpha$. 
\item Let $\beta\colon G \acts B$ be another action on a $C^*$-algebra. The actions $\alpha$ and $\beta$ are called \emph{cocycle conjugate} (denoted by $\alpha \simeq_{\mathrm{cc}} \beta$) if there exists an isomorphism $\phi\colon A \rightarrow B$ such that $\phi \circ \alpha \circ \phi^{-1}$ is a cocycle perturbation of $\beta$. 
\end{enumerate}

\begin{definition}\label{def:ssa}
Let $\mathcal{D}$ be a separable, unital $C^*$-algebra. 
\begin{itemize}
\item \cite[Definition 1.3]{TomsWinter} $\mathcal{D}$ is called \emph{strongly self-absorbing} if $\mathcal{D} \ncong \C$ and $\op{id}_\mathcal{D} \otimes 1_\mathcal{D}\colon \mathcal{D} \rightarrow \mathcal{D} \otimes \mathcal{D}$ is approximately unitarily equivalent to an isomorphism.\footnote{As in \cite{TomsWinter}, strong self-absorption is a priori defined using the minimal $C^*$-algebraic tensor product. As is shown there, strong self-absorption automatically implies nuclearity of the $C^*$-algebra, so the choice of tensor product turns out to be irrelevant afterwards.}
\item \cite[Definition 5.3]{CategoricalFramework} An action $\gamma\colon G \acts \mathcal{D}$ is called \emph{strongly self-absorbing} if there exists a sequence of unitaries $(u_n)_{n \in \N} \in \mathcal{D} \otimes \mathcal{D}$ such that the sequence of maps $\op{Ad}(u_n)\circ (\op{id}_\mathcal{D} \otimes 1_\mathcal{D} )\colon \mathcal{D} \rightarrow \mathcal{D}\otimes \mathcal{D}$ converges pointwise to a $*$-isomorphism (i.e.\ $\op{id}_\mathcal{D} \otimes 1_\mathcal{D}$ is approximately unitarily equivalent to an isomorphism) and the map
 \[G \rightarrow \mathcal{U}(\mathcal{D} \otimes \mathcal{D})\colon g \mapsto u_n (\gamma \otimes \gamma)_g (u_n^*)\] converges pointwise to a cocycle.
\end{itemize}
\end{definition}
\begin{remark}
Strongly self-absorbing actions were originally defined differently by Szab\'o in \cite{SsaDynSyst}. The definition was changed in his later work \cite{CategoricalFramework}, after it became clear that the notion above was more natural. This new definition agrees with what where originally called ``semi-strongly self-absorbing" actions in references \cite{SsaDynSyst3}--\cite{EquivariantSI}.
\end{remark} 
\begin{remark}From Definition \ref{def:ssa} we see the following:
\begin{itemize}
\item
If $\gamma\colon G \acts \mathcal{D}$ is strongly self-absorbing, then it is cocycle conjugate to $\gamma \otimes \gamma$. Let $\phi\colon \mathcal{D} \rightarrow \mathcal{D} \otimes \mathcal{D}$ denote the isomorphism that is approximately unitarily equivalent to $\op{id}_{\mathcal{D}} \otimes 1_{\mathcal{D}}$, and let $w$ denote the cocycle obtained from the sequence of unitaries realizing the approximate unitary equivalence. Then $\phi \circ \gamma_g \circ \phi^{-1} = \op{Ad}(w_g) \circ (\gamma \otimes \gamma)_g$ for all $g \in G$.
\item The trivial action on a separable, unital $C^*$-algebra is strongly self-absorbing if and only if the $C^*$-algebra itself is strongly self-absorbing. 
\end{itemize}
\end{remark}

\begin{definition} Let $A$ be a separable $C^*$-algebra and $\mathcal{D}$ a strongly self-absorbing $C^*$-algebra.
\begin{itemize}
\item $A$ is called \emph{$\mathcal{D}$-stable} if $A \cong A\otimes \mathcal{D}$. 
\item \cite[Remark following Theorem 2.6]{SsaDynSyst2} Let $\alpha \colon G \acts A$ be an action of a countable discrete group $G$ and let $\gamma\colon G \acts \mathcal{D}$ be a strongly self-absorbing action. The action $\alpha$ is called \emph{$\gamma$-absorbing} or \emph{$\gamma$-stable} if $\alpha \simeq_{\mathrm{cc}} \alpha \otimes \gamma$. In the special case that $\gamma$ is the trivial $G$-action, $\alpha$ is called \emph{equivariantly $\mathcal{D}$-stable}.
\end{itemize}
\end{definition}
 
\end{definition}
\begin{definition}
Let $A$ be a $C^*$-algebra. Each non-empty set $X \subset T(A)$ gives rise to a seminorm $\|\cdot\|_{2,X}$ on $A$, defined by
\[\|a\|_{2,X} := \sup_{\tau \in X}\tau (a^*a)^{1/2}.\]
The seminorm $\|\cdot\|_{2,T(A)}$ is also denoted by $||\cdot\|_{2,u}$. This is a norm if and only if for all non-zero $a \in A$ there exists some $\tau \in T(A)$ such that $\tau(a^*a) >0$, so in particular if $A$ is algebraically simple with non-empty trace space. 
\end{definition}
\begin{definition}[{\cite[Definition 1.1]{Kirchberg}, \cite[Definition 4.3]{KirchbergRordam}, \cite[1.3]{CETWW}}]\label{definition:ultrapowers}
Let $A$ be a $C^*$-algebra with an action $\alpha\colon G \acts A$ of a countable discrete group.
\begin{enumerate}
\item The \textit{ultrapower} of $A$ is defined as
\[A_\omega := \ell^\infty(A)/\{(a_n)_{n \in \N} \in \ell^\infty(A) \colon \lim_{n \rightarrow \omega} \|a_n\| = 0\}.\]
\item Pointwise application of $\alpha$ on representing sequences induces an action on the ultrapower, which we will denote by $\alpha_\omega\colon G \acts A_\omega$.
\item There is a natural inclusion $A \subset A_\omega$ by identifying an element of $A$ with its constant sequence. Define
\[A_\omega \cap A' := \{x \in A_\omega\mid [x,A]=0\} \quad \text{and}\quad A_\omega \cap A^\perp := \{x \in A_\omega \mid xA=Ax=0\}.\]
The quotient
\[F_\omega(A) := (A_\omega \cap A')/(A_\omega \cap A^\perp).\]
is called the \emph{(corrected) central sequence algebra}. For unital $A$ this equals $A_\omega \cap A'$.
\item Since $A$ is $\alpha_\omega$-invariant, so are $A_\omega \cap A'$ and $A_\omega \cap A^\perp$. Thus, $\alpha_\omega$ induces an action on $F_\omega(A)$, which we will denote by $\tilde{\alpha}_\omega\colon G \acts F_\omega(A)$.
\item When $A$ has no unbounded tracial weights and $T(A) \neq \emptyset$, the \emph{trace-kernel ideal} is defined by (abusing notation using representative sequences in $\ell^\infty(A)$ to denote elements in $A_\omega$)
\[J_A := \{ (a_n)_{n \in \N} \in A_\omega \mid \lim_{n \rightarrow \omega} \|a_n\|_{2,T(A)}= 0\}.\]
The \emph{uniform tracial ultrapower} is defined as
\[A^\omega := A_\omega/ J_A.\]
Whenever $\|\cdot\|_{2,T(A)}$ defines a norm on $A$, there also exists a canonical embedding of $A$ into $A^\omega$. Then $A^\omega \cap A'$ is called the \emph{uniform tracial central sequence algebra}. 
\item As $J_A$ is $\alpha_\omega$-invariant, there is an induced action on the uniform tracial ultrapower, which we will denote by $\alpha^\omega\colon G \acts A^\omega$. 
\end{enumerate}
\end{definition}

\begin{definition}
A sequence of traces $(\tau_n)_{n=1}^\infty$ on $A$ defines a trace on $A_\omega$ via
\[(a_n)_{n \in \N} \mapsto \lim_{n \rightarrow \omega} \tau_n(a_n).\]
A trace of this form is called a \emph{limit trace}. Clearly, every limit trace vanishes on $J_A$ and hence also induces a trace on $A^\omega$. We will denote the collection of these limit traces on both $A_\omega$ and $A^\omega$ by $T_\omega(A)$. Note that 
\[J_A =\{ x \in A_\omega \colon \|x\|_{2,T_\omega(A)} = 0\},\]
so in particular $\|\cdot\|_{2,T_\omega(A)}$ defines a norm on $A^\omega$. 
\end{definition}
\begin{prop}[{\cite[Proposition 1.1]{CETW_upG}}]\label{prop:tracial_ultrapower_unital}
Let $A$ be a separable $C^*$-algebra with $T(A)$ compact and non-empty. Then $A^\omega$ is unital. Moreover, if $(e_n)_{n\in \N}$ is an approximate unit for $A$, then $\|e_n - 1_{A^\omega}\|_{2, T_\omega(A)} \rightarrow 0$.
\end{prop}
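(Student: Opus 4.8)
The plan is to exhibit an explicit unit and then to reduce everything to a single uniform estimate over the trace space, where compactness of $T(A)$ enters through Dini's theorem. Since $A$ is separable it is $\sigma$-unital, so I fix an increasing approximate unit $(g_m)_m$ with $0\le g_m\le 1$. Each $\tau\in T(A)$ extends to a tracial state $\widetilde\tau$ on the unitization $\widetilde A$ with $\widetilde\tau(1)=1$, and since $(g_m)$ is an approximate unit one has $\tau(g_m)\nearrow 1$ pointwise. The first and crucial step is to upgrade this to uniform convergence: the functions $\tau\mapsto\tau(g_m)$ are weak-$*$ continuous on the compact space $T(A)$ and increase pointwise to the constant function $1$, so Dini's theorem yields $\sup_{\tau\in T(A)}\tau(1-g_m)\to 0$. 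This uniform bound is the heart of the argument and is exactly where compactness of $T(A)$ is used; without it the statement can fail.

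Next I would verify that $u:=[(g_m)_m]\in A^\omega$ is a unit. Fix $x=[(x_m)_m]\in A^\omega$ with $\sup_m\|x_m\|\le M$ and a limit trace $\lambda\in T_\omega(A)$ represented by $(\tau_m)$. Since $ux-x$ is represented by $(-(1-g_m)x_m)_m$, combining $0\le(1-g_m)^2\le 1-g_m$, the trace identity, and $x_mx_m^*\le M^2 1$ gives for each $m$
\[
\tau_m\big(x_m^*(1-g_m)^2x_m\big)\le\tau_m\big((1-g_m)^{1/2}x_mx_m^*(1-g_m)^{1/2}\big)\le M^2\,\tau_m(1-g_m).
\]
Passing to the limit along $\omega$ and using the uniform bound from the previous step yields $\lambda((ux-x)^*(ux-x))\le M^2\lim_{m\to\omega}\sup_\tau\tau(1-g_m)=0$. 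Taking the supremum over all limit traces gives $\|ux-x\|_{2,T_\omega(A)}=0$, so $ux=x$, and symmetrically $xu=x$; hence $u=1_{A^\omega}$ and $A^\omega$ is unital. The same uniform bound shows $\lambda(1_{A^\omega})=\lim_{m\to\omega}\tau_m(g_m)=1$, so every limit trace is a genuine tracial state on the unital algebra $A^\omega$.

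Finally, for the \emph{moreover} assertion let $(e_n)_n$ be an arbitrary approximate unit. The key is once more the uniform estimate $\sup_\tau\tau(1-e_n)\to 0$, which I would obtain by comparison with $(g_m)$: given $\e>0$, Dini provides $m_0$ with $\tau(g_{m_0})>1-\e$ for all $\tau$; since $1-g_{m_0}\ge 0$ in $\widetilde A$ one has $\tau(e_n(1-g_{m_0}))=\tau(e_n^{1/2}(1-g_{m_0})e_n^{1/2})\ge 0$, whence
\[
\tau(e_n)\ge\tau(e_ng_{m_0})\ge\tau(g_{m_0})-\|e_ng_{m_0}-g_{m_0}\|\ge 1-\e-\|e_ng_{m_0}-g_{m_0}\|
\]
uniformly in $\tau$; letting $n\to\infty$ and then $\e\to 0$ gives $\sup_\tau\tau(1-e_n)\to 0$. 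As each $\lambda\in T_\omega(A)$ is a state on $A^\omega$ and $e_n^2\le e_n$, and using $\lambda(e_n)\ge\inf_\tau\tau(e_n)$,
\[
\lambda\big((1_{A^\omega}-e_n)^2\big)=1-2\lambda(e_n)+\lambda(e_n^2)\le 1-\lambda(e_n)\le\sup_\tau\tau(1-e_n),
\]
so $\|e_n-1_{A^\omega}\|_{2,T_\omega(A)}^2\le\sup_\tau\tau(1-e_n)\to 0$, as required. The only genuine obstacle is obtaining uniformity over $T(A)$; once the Dini-type estimates are in place, everything else is routine trace manipulation (order preservation under conjugation, $b^2\le b$ for $0\le b\le 1$, and the trace property).
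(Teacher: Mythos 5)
Your proof is correct: Dini's theorem on the compact space $T(A)$ gives the uniform estimate $\sup_{\tau \in T(A)}\tau(1-g_m)\to 0$, the trace manipulations showing that $[(g_m)_m]$ acts as a unit on $A^\omega$ (using that $\|\cdot\|_{2,T_\omega(A)}$ is a genuine norm on $A^\omega$) are valid, and the comparison argument transferring the uniformity to an arbitrary approximate unit $(e_n)$ via $\tau(e_n)\ge \tau(e_n g_{m_0})\ge \tau(g_{m_0})-\|e_n g_{m_0}-g_{m_0}\|$ is sound. Note that the paper under review does not prove this proposition at all but imports it from \cite{CETW_upG}; your argument is essentially the standard proof from that reference, which rests on exactly this Dini-type uniformity over the compact trace space followed by routine trace estimates.
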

\begin{remark}\label{remark:limit_traces_F(A)}
Given a $C^*$-algebra $A$ as in the previous proposition, the Cauchy--Schwarz inequality implies that for every approximate unit $(e_n)_{n \in \N}$ for $A$ 
\[\lim_{n \rightarrow \infty} \sup_{\tau \in T_\omega(A)}\lvert \tau(e_n x) - \tau(x)\rvert = 0 \text{ for all } x \in A_\omega.\]
This means in particular that all limit traces vanish on $A_\omega \cap A^\perp$, and that $A_\omega \cap A^\perp \subset J_A$.
\end{remark}

\begin{definition}[cf.\ {\cite{Ozawa}}]\label{definition:uniform_tracial_closure}
Given an algebraically simple $C^*$-algebra $A$ with $T(A)$ non-empty and compact, its \emph{uniform tracial completion} is defined as
\[\bar{A}^u= \frac{\{(a_n)_{n \in \N} \in \ell^\infty(A)\colon (a_n)_{n \in \N} \text{ is } \|\cdot\|_{2,T(A)}\text{-Cauchy} \}}{\{(a_n)_{n \in \N} \in \ell^\infty(A)\colon \lim_{n \rightarrow \infty}\|a_n\|_{2,T(A)} = 0\}}.\]
In other words, it is the $C^*$-algebra obtained by completing the closed unit ball of $A$ with respect to the $\|\cdot\|_{2,T(A)}$-norm and then taking all scalar multiples. If $A$ is separable, Proposition \ref{prop:tracial_ultrapower_unital} guarantees that $\bar{A}^u$ is unital, since an approximate unit $(e_n)_{n \in \N}$ for $A$ will be a $\|\cdot\|_{2,T(A)}$-Cauchy sequence. All traces on $A$ extend to traces on $\bar{A}^u$. As a consequence, $\|\cdot\|_{2,T(A)}$ still defines a valid norm on $\bar{A}^u$.\footnote{It is natural to view the object $\bar{A}^u$ in the category of tracially complete $C^*$-algebras, which will be introduced and clarified in the upcoming paper \cite{CCEGSTW} (currently still in preparation).}
The ultrapower of $\bar{A}^u$ is defined as
\[\left(\bar{A}^u\right)^\omega=  \ell^\infty(\bar{A}^u)/{\{(a_n)_{n \in \N} \in \ell^\infty(\bar{A}^u)\colon \lim_{n \rightarrow \omega}\|a_n\|_{2,T(A)} = 0\}}.\] 
As the unit ball of $A$ is $\|\cdot\|_{2,T(A)}$-dense in the unit ball of $\bar{A}^u$, we get $\left(\bar{A}^u\right)^\omega \cong A^\omega$. 
\end{definition}

Let $A$ be a $C^*$-algebra with tracial state $\tau$. Suppose that $\alpha$ is an automorphism on $A$ that satisfies $\tau \circ \alpha = \tau$. Then $\alpha$ extends uniquely to a trace-preserving automorphism of $\pi_\tau(A)''$, where $\pi_\tau$ denotes the GNS representation of $A$ associated to $\tau$.
\begin{definition}\label{def:strongly_outer}
Let $A$ be an algebraically simple $C^*$-algebra with $T(A) \neq \emptyset$. We say that an automorphism $\alpha$ on $A$ is \emph{strongly outer} if for every $\tau \in T(A)$ with $\tau \circ \alpha = \tau$, the automorphism induced by $\alpha$ on $\pi_\tau(A)''$ is outer. An action $\alpha\colon G \acts A$ of a countable discrete group $G$ is called \emph{strongly outer} if $\alpha_g$ is strongly outer for all $g \in G \setminus\{e\}$.\footnote{In particular, although each automorphism $\alpha$ corresponds to an action of $\Z$, the notion of strongly outer for an action of $\Z$ is much stronger than the corresponding notion for a single automorphism, since it asks that $\alpha^n$ is strongly outer for each $n \neq 0$ instead of just asking for $\alpha$ to be strongly outer.}
\end{definition}
\section{Reducing equivariant $\mathcal{Z}$-stability to a tracial property}\label{section:reduct_1}
In this section we show that --- under the right assumptions --- equivariant $\mathcal{Z}$-stability of an action on a $C^*$-algebra is equivalent to a property at the level of the uniform tracial central sequence algebra of that $C^*$-algebra. This result allows us to reduce the equivariant $\mathcal{Z}$-stability problem to a more manageable tracial one that will be dealt with in later sections. More precisely, we prove the following equivalence:

\begin{theorem}\label{theorem:reduction_tracial_property}
Let $A$ be an algebraically simple, separable, nuclear, $\mathcal{Z}$-stable $C^*$-algebra with $T(A)$ non-empty and compact and let $\alpha\colon G \acts A$ be an action of a countable discrete amenable group $G$. Then the following are equivalent: 
\begin{enumerate}
\item $\alpha \simeq_{\mathrm{cc}}\alpha \otimes \op{id}_\mathcal{Z}$;
\item There exists a unital $*$-homomorphism $M_n \rightarrow (A^\omega \cap A')^{\alpha^\omega}$ for all $n \geq 2$.
\end{enumerate} 
\end{theorem}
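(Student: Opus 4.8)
The statement is an equivalence, and I would prove the two implications by quite different means: $(1)\Rightarrow(2)$ is a soft argument exploiting central sequences coming from the Jiang--Su tensor factor, whereas $(2)\Rightarrow(1)$ is the substantial direction and is an equivariant incarnation of the Matui--Sato method, whose engine is equivariant property (SI). Throughout I would freely use that, since $A$ is algebraically simple with $T(A)$ compact and non-empty, the uniform tracial ultrapower $A^\omega$ is unital (Proposition \ref{prop:tracial_ultrapower_unital}) and $A_\omega \cap A^\perp \subset J_A$ (Remark \ref{remark:limit_traces_F(A)}), so that the quotient map $A_\omega \to A^\omega$ induces a natural unital $*$-homomorphism $F_\omega(A) \to A^\omega \cap A'$ intertwining $\tilde\alpha_\omega$ with $\alpha^\omega$.

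For $(1)\Rightarrow(2)$ I would first observe that condition (2) depends only on the cocycle conjugacy class of $\alpha$. If $\phi\colon A\to B$ is an isomorphism with $\phi\circ\alpha_g\circ\phi^{-1}=\op{Ad}(w_g)\circ\beta_g$ for a $\beta$-cocycle $w$, then $\phi$ induces an isomorphism $A^\omega\cap A'\cong B^\omega\cap B'$; since every element of the uniform tracial central sequence algebra commutes with all unitary multipliers, $\op{Ad}(w_g)$ acts trivially there, so $\alpha^\omega$ and $\beta^\omega$ have the same fixed-point algebra on it. Hence (2) transfers across $\simeq_{\mathrm{cc}}$, and it suffices to verify (2) for $\alpha\otimes\op{id}_\mathcal{Z}$ on $A\otimes\mathcal{Z}$. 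Because $\mathcal{Z}$ is $\mathcal{Z}$-stable, $F_\omega(\mathcal{Z})$ contains a unital copy of $M_n$; tensoring by $1_A$ and composing with the inclusion $F_\omega(\mathcal{Z})\hookrightarrow F_\omega(A\otimes\mathcal{Z})$ gives a unital $*$-homomorphism $M_n\to F_\omega(A\otimes\mathcal{Z})$ whose image is fixed by the induced action, precisely because the action on the $\mathcal{Z}$-factor is trivial. Post-composing with the natural map into $(A\otimes\mathcal{Z})^\omega\cap(A\otimes\mathcal{Z})'$ produces the required unital embedding into the fixed-point algebra.

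For $(2)\Rightarrow(1)$ the plan is to promote the hypothesised matrix embedding to a unital $*$-homomorphism $\mathcal{Z}\to F_\omega(A)^{\tilde\alpha_\omega}$, which by the equivariant McDuff-type characterisation of equivariant $\mathcal{Z}$-stability (see \cite{EquivariantSI}) is equivalent to (1). This proceeds in two stages. First I would lift the unital map $M_n\to(A^\omega\cap A')^{\alpha^\omega}$ to a c.p.c.\ order zero map $M_n\to F_\omega(A)$ that is approximately multiplicative and approximately $\tilde\alpha_\omega$-fixed in the uniform $2$-norm, using projectivity of the cone over $M_n$ together with nuclearity of $A$, and then making it genuinely $\tilde\alpha_\omega$-fixed by a reindexing argument. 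Second --- and this is the heart --- I would invoke equivariant property (SI): since $A$ is simple, separable, nuclear and $\mathcal{Z}$-stable it has strict comparison, and for amenable $G$ the relevant inclusion into the fixed-point algebra satisfies equivariant (SI). Exactly as in Matui--Sato \cite{MatuiSatoStrictComparison}, (SI) allows me to compare the ``small'' and ``large'' positive elements arising from the order zero map and to supply the connecting element needed to assemble the generators of a prime dimension-drop, hence Jiang--Su, structure inside $F_\omega(A)^{\tilde\alpha_\omega}$.

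I expect the main obstacle to be this second stage: establishing equivariant property (SI) in the present non-unital, algebraically simple setting with possibly infinite trace orbits, and then running the Matui--Sato gluing entirely inside the fixed-point algebra while keeping all estimates uniform over $T(A)$. A secondary but genuine technical point is the passage between the uniform tracial central sequence algebra $A^\omega\cap A'$, in which the hypothesis is phrased, and the corrected central sequence algebra $F_\omega(A)$, in which $\mathcal{Z}$-stability is detected; care is needed so that the lift of the matrix embedding remains equivariantly fixed after reindexing.
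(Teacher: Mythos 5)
Your direction $(1)\Rightarrow(2)$ contains a genuine error at its key step. The preliminary reduction is fine: condition (2) is indeed invariant under cocycle conjugacy, because central sequences in $B^\omega\cap B'$ commute with the canonical image of $\mathcal{M}(B)$ in the (unital) tracial ultrapower $B^\omega$, so cocycle perturbations act trivially there. But the claim that $F_\omega(\mathcal{Z})=\mathcal{Z}_\omega\cap\mathcal{Z}'$ contains a unital copy of $M_n$ is false. Projections in a norm ultrapower lift to sequences of projections, and $\mathcal{Z}$ is projectionless; hence the only projections in $\mathcal{Z}_\omega$, and a fortiori in $F_\omega(\mathcal{Z})$, are $0$ and $1$, so there is no unital $*$-homomorphism $M_n\to F_\omega(\mathcal{Z})$ for any $n\geq 2$. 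This is not a repairable slip within your argument as written: the absence of matrix units in \emph{norm} central sequence algebras of $\mathcal{Z}$-stable algebras is exactly the phenomenon the Matui--Sato framework is designed around --- matrix units exist only \emph{tracially}, e.g.\ in $\mathcal{Z}^\omega\cap\mathcal{Z}'\cong\mathcal{R}^\omega\cap\mathcal{R}'$. The paper instead uses that (1) is equivalent to the existence of a unital $*$-homomorphism $\mathcal{Z}\to F_\omega(A)^{\tilde{\alpha}_\omega}$ (\cite[Corollary 3.8]{SsaDynSyst}), composes with the quotient onto $(A^\omega\cap A')^{\alpha^\omega}$, extends to the weak closure $\pi_{\tau_\mathcal{Z}}(\mathcal{Z})''\cong\mathcal{R}$ using $\|\cdot\|_{2,T_\omega(A)}$-completeness of the unit ball and Kaplansky density, and only then composes with a unital embedding $M_n\to\mathcal{R}$. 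Your argument could be repaired in the same spirit by replacing $F_\omega(\mathcal{Z})$ with the tracial central sequence algebra of $\mathcal{Z}$ and mapping $1\otimes(\mathcal{Z}^\omega\cap\mathcal{Z}')$ into $(A\otimes\mathcal{Z})^\omega\cap(A\otimes\mathcal{Z})'$, but some passage to the von Neumann level is unavoidable.

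Your direction $(2)\Rightarrow(1)$ follows the paper's route in outline (equivariant order zero lift to $F_\omega(A)$, then equivariant property (SI) to produce the element $s$ with $s^*s=1-\phi(1)$ and $\phi(p)s=s$, then R{\o}rdam--Winter to get $Z_{n,n+1}$ and hence $\mathcal{Z}$ unitally into $F_\omega(A)^{\tilde{\alpha}_\omega}$), but the lifting step is not justified as you state it. Projectivity of the cone over $M_n$ gives \emph{some} order zero lift $\phi\colon M_n\to F_\omega(A)$, yet its failure of equivariance, $\tilde{\alpha}_{\omega,g}(\phi(x))-\phi(x)$, lies only in the kernel ideal $\mathcal{J}_A=(J_A\cap A')/(A_\omega\cap A^\perp)$: it is small in the uniform $2$-norm but not in the operator norm of $F_\omega(A)$, so there is no norm-approximate equivariance on which a reindexing or $\varepsilon$-test argument could operate. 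What closes this gap in the paper is that $\mathcal{J}_A$ is a $G$-$\sigma$-ideal (Example \ref{example:sigma_ideal}), so Proposition \ref{prop:sigma_ideal} yields an \emph{exactly} equivariant order zero splitting; amenability of $G$ enters precisely in the $\sigma$-ideal property (via approximately invariant quasicentral approximate units), not in any reindexing. Finally, note that the ingredient you flag as the ``main obstacle'' --- equivariant (SI) in the algebraically simple, possibly non-unital setting --- is not an obstacle at all: it is available at exactly this generality from \cite[Corollary 4.3]{EquivariantSI}, in the form recorded in Proposition \ref{prop:equivalent_statement_SI}, and the paper simply cites it.
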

This result is folklore and implicit from existing papers, but has never been explicitly stated in this level of generality. Its proof is included here for the sake of completeness and will occupy the rest of this section. It essentially boils down to an equivariant version of a well-known argument due to Matui--Sato \cite{MatuiSatoStrictComparison}. The original argument relies on \emph{property (SI)}, a property for $C^*$-algebras introduced and studied by Matui--Sato in \cite{Sato, MatuiSatoZstability, MatuiSatoStrictComparison, MatuiSatoZstabilityII}, which allows one to derive ordinary $\mathcal{Z}$-stability of a separable $C^*$-algebra $A$ from the existence of unital $*$-homomorphisms $M_n \rightarrow A^\omega\cap A'$. In the case of unital $C^*$-algebras, an equivariant version of property (SI) already appeared implicitly in \cite{Sato} for automorphisms and later in \cite{MatuiSatoZstability} for more general group actions. This was extended in a rather ad hoc way to include certain non-unital $C^*$-algebras by Nawata \cite{Nawata}. The first fully general extension to all simple, nuclear $C^*$-algebras without extra assumptions on the trace space appeared in the work of Szab\'o \cite{EquivariantSI}. There it was also shown that a large class of $C^*$-algebras including the ones in Theorem \ref{theorem:reduction_tracial_property} automatically have property (SI) relative to actions of amenable groups \cite[Corollary 4.3]{EquivariantSI}.

\begin{definition}[special case of {\cite[Definition 2.7]{EquivariantSI}}, see subsequent remark]\label{definition:equivariantSI}
Let $A$ be an algebraically simple, separable $C^*$-algebra with $T(A)$ non-empty and compact, and let $\alpha\colon G \acts A$ be an action by a countable discrete group. We say that $A$ has \emph{equivariant property (SI)} relative to $\alpha$ if the following holds: 

Let $x \in F_\omega(A)^{\tilde{\alpha}_\omega}$  be a positive contraction such that 
\begin{equation}\label{eq:cond_x1}
\tau(ax) =0 \quad \text{for all } a \in A_+\setminus\{0\},\, \tau \in T_\omega(A),
\end{equation}
 and let $y \in F_\omega(A)^{\tilde{\alpha}_\omega}$  be a positive contraction such that for all non-zero $b \in A_+$ there exists a constant $\kappa = \kappa(y,b) >0$ such that \begin{equation}\label{eq:cond_y1}
 \inf_{k \in \N} \tau(by^k) \geq \kappa \tau(b)\quad \text{for all } \tau \in T_\omega(A).
\end{equation}  
Then there exists $s \in F_\omega(A)^{\tilde{\alpha}_\omega}$ such that $s^*s=x$ and $ys=s$. 
\end{definition}

\begin{remark}\label{remark:reduction_definition_SI}
The definition as it is stated in \cite{EquivariantSI} is valid for more general non-unital $C^*$-algebras. It imposes a condition on the elements $x$ and $y$ that is formally stronger than the one in the definition above. However, in case $A$ is  algebraically simple with $T(A)$ compact,  the trace space $T(A)$ is a so-called \emph{compact generator} of the cone of lower semi-continuous tracial weights on $A$.\footnote{This means that $T(A)$ is compact and $\R^{>0} \cdot T(A)$ is equal to the entire cone of such tracial weights except for the trivial zero weight and the weight taking value $\infty$ on all non-zero positive elements.} In this case the original definition can be reduced to the definition given above by \cite[Lemma 2.10 and the subsequent Remark 2.11]{EquivariantSI}. Moreover, it follows from \cite[Proposition 2.4]{EquivariantSI} and simplicity of $A$ that in order to verify condition \eqref{eq:cond_x1} or \eqref{eq:cond_y1} it is enough to check if it holds for an arbitrary single element $a$ or $b$ in $A_+ \setminus\{0\}$ instead of all positive elements. 

Note that in case $G=\{1\}$ or when $\alpha$ is the trivial action, the definition of equivariant property (SI) reduces to the original definition of property (SI) for the $C^*$-algebra.
\end{remark}
We show that under the assumption that the $C^*$-algebra is algebraically simple with compact trace space, Definition \ref{definition:equivariantSI} is equivalent to a statement that agrees with the original definition for actions on unital $C^*$-algebras first appearing in the statement of \cite[Proposition 4.5]{MatuiSatoZstabilityII}: 
\begin{prop}\label{prop:equivalent_statement_SI}
Let $A$ be an algebraically simple, separable $C^*$-algebra with $T(A)$ non-empty and compact, and let $\alpha\colon G \acts A$ be an action by a countable discrete group. Then $A$ has equivariant property (SI) relative to $\alpha$ if and only if the following holds: Let $x,y \in F_\omega(A)^{\tilde{\alpha}^\omega}$ be two positive contractions such that 
\begin{equation}\label{eq:cond_xy}
\tau(x) =0 \quad \text{for all } \tau \in  T_\omega(A) \quad  \text{and} \quad \inf_{k \in \N}\inf_{ \tau  \in T_\omega(A)} \tau(y^k) >0.
\end{equation}
Then there exists an $s \in F_\omega(A)^{\tilde{\alpha}_\omega}$ such that $s^*s=x$ and $ys=s$. 
\end{prop}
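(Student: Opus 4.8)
The plan is to observe that the conclusion---the existence of $s \in F_\omega(A)^{\tilde{\alpha}_\omega}$ with $s^*s = x$ and $ys = s$---is literally identical in Definition \ref{definition:equivariantSI} and in the statement of the proposition. Hence it suffices to prove that the hypotheses imposed on the pair $(x,y)$ in the two formulations are equivalent: that condition \eqref{eq:cond_x1} on $x$ is equivalent to $\tau(x) = 0$ for all $\tau \in T_\omega(A)$, and that condition \eqref{eq:cond_y1} on $y$ is equivalent to $\inf_k \inf_\tau \tau(y^k) > 0$ as in \eqref{eq:cond_xy}. Throughout I would fix an approximate unit $(e_m)_m$ of $A$ and use Proposition \ref{prop:tracial_ultrapower_unital} in the form $\delta_m := \|1 - e_m\|_{2, T_\omega(A)} \to 0$, together with the Cauchy--Schwarz estimate $|\tau(zw)| \le \|z^*\|_{2,\tau}\|w\|_{2,\tau}$ for limit traces, which are well defined on $F_\omega(A)$ by Remark \ref{remark:limit_traces_F(A)}.

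For the condition on $x$, the implication $\tau(x) = 0 \Rightarrow \tau(ax) = 0$ is immediate from the trace property, since for $a \in A_+$ one has $\tau(ax) = \tau(x^{1/2} a x^{1/2}) \le \|a\|\,\tau(x) = 0$ while $\tau(ax) \ge 0$. Conversely, assuming \eqref{eq:cond_x1}, I would apply it to $a = e_m \in A_+\setminus\{0\}$ and estimate $|\tau(x) - \tau(e_m x)| = |\tau((1 - e_m) x)| \le \delta_m \|x\|_{2,\tau} \le \delta_m$ uniformly over $\tau \in T_\omega(A)$ by Cauchy--Schwarz; letting $m \to \infty$ forces $\tau(x) = 0$ for all $\tau$.

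The heart of the argument is the equivalence of the two conditions on $y$, and the direction I expect to be the \emph{main obstacle} is deriving the comparison estimate \eqref{eq:cond_y1} from the cleaner bound $c := \inf_k\inf_\tau \tau(y^k) > 0$, because a direct estimate of $\tau(by^k)$ against $\tau(b)$ breaks down at traces where $\tau(b)$ is small while $\|b\|$ stays fixed. To circumvent this I would invoke Remark \ref{remark:reduction_definition_SI}: by simplicity of $A$ it is enough to verify \eqref{eq:cond_y1} for a single nonzero $b \in A_+$. Choosing $b = e_m$ with $\delta_m < c/2$, and using that $y$ commutes with $e_m$ so that $(1-e_m)y^k = y^{k/2}(1-e_m)y^{k/2} \ge 0$, one gets $\tau(e_m y^k) = \tau(y^k) - \tau((1-e_m)y^k) \ge c - \delta_m \ge c/2 \ge (c/2)\,\tau(e_m)$ for all $k$ and all $\tau$, since $\tau(e_m) \le 1$; thus \eqref{eq:cond_y1} holds with $\kappa = c/2$ for this particular $b$, and Remark \ref{remark:reduction_definition_SI} upgrades it to all $b$. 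For the reverse direction, assuming \eqref{eq:cond_y1}, I would apply it to $b = e_m$ with $\delta_m < 1/2$, obtaining some $\kappa_m > 0$ with $\tau(e_m y^k) \ge \kappa_m \tau(e_m)$; then $(1-e_m)y^k \ge 0$ yields $\tau(y^k) \ge \tau(e_m y^k)$, while $\tau(e_m) \ge 1 - \delta_m \ge 1/2$ uniformly in $\tau$, so that $\tau(y^k) \ge \kappa_m/2$ for all $k$ and $\tau$, giving $\inf_k\inf_\tau \tau(y^k) \ge \kappa_m/2 > 0$. Combining the two equivalences with the identical conclusion then proves the proposition.
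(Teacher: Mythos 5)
Your proposal is correct and follows essentially the same route as the paper's proof: reduce to the equivalence of the hypotheses on $(x,y)$, handle $x$ by a direct trace estimate, and handle $y$ by testing against an approximate-unit element $e_m$ close to $1_{A^\omega}$ in $\|\cdot\|_{2,T_\omega(A)}$, invoking Remark \ref{remark:reduction_definition_SI} to reduce \eqref{eq:cond_y1} to this single element. The only (cosmetic) difference is that you bound $\tau((1-e_m)y^k)$ by Cauchy--Schwarz, whereas the paper bounds it by $\bigl(\tau(y)-\tau(e y)\bigr)\,\|y^{k-1}\|$ using commutation and positivity; both estimates serve the same purpose.
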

\begin{proof}
To prove the equivalence, it is certainly enough to prove that two positive contractions $x,y \in F_\omega(A)^{\tilde{\alpha}_\omega}$ satisfy the conditions \eqref{eq:cond_x1} and \eqref{eq:cond_y1} if and only if they satisfy \eqref{eq:cond_xy}. 
So, take positive contractions $x,y \in F_\omega(A)^{\tilde{\alpha}_\omega}$ satisfying \eqref{eq:cond_xy}. 
Pick an arbitrary non-zero positive $a \in A$ and a $\tau \in T_\omega(A)$. As $x$ commutes with $a$ we get that \[\tau(ax) \leq \tau(x)\|a\|=0,\] thus $x$ satisfies condition \eqref{eq:cond_x1}. 
By Remark \ref{remark:limit_traces_F(A)} it is possible to pick a positive contraction $e \in A$ such that 
\[\sup_{\tau \in T_\omega(A)}|\tau(y) - \tau(ey)| < \inf_{k \in \N}\inf_{ \tau  \in T_\omega(A)} \tau(y^k).\] As explained in Remark \ref{remark:reduction_definition_SI} it suffices to verify  condition \eqref{eq:cond_y1} for an arbitrary single element $b \in A_+\setminus\{0\}$, so in particular we can pick $b=e$. Set 
\[\kappa := \inf_{k \in \N}\inf_{ \tau  \in T_\omega(A)} \tau(y^k)- \sup_{\tau \in T_\omega(A)} |\tau(y) - \tau(ey)| >0.\]
Then for any $k \in \N$ and $\tau \in T_\omega(A)$ we have
\begin{align*}
\tau(ey^k)&= \tau(y^k) - \tau(y^k) + \tau(ey^k)\\
 &\geq \tau(y^k)- \left(\tau(y) - \tau(ey)\right)\,\|y^{k-1}\|\\
 &\geq \kappa \geq \kappa \tau(e).
\end{align*} 
This shows that $y$ satisfies \eqref{eq:cond_y1}.

For the other implication, assume $x,y \in F_\omega(A)^{\tilde{\alpha}_\omega}$ are positive contractions satisfying \eqref{eq:cond_x1} and \eqref{eq:cond_y1}. 
Take an approximate unit $(e_n)_{n \in \N}$ for $A$.
By Remark \ref{remark:limit_traces_F(A)} we get ${\tau(x) = \lim_{n \rightarrow \infty}\tau(e_nx) = 0}$ for all $\tau \in T_\omega(A)$. 
By Proposition \ref{prop:tracial_ultrapower_unital} we can pick $n \in \N$ large enough such that $\tau(e_n) >1/2$ for all $\tau \in T_\omega(A)$. 
Hence
\[\tau(y^k) \geq \tau(y^{k/2}e_ny^{k/2}) = \tau(e_ny^k) \geq \kappa(y,e_n)\tau(e_n) > \frac{1}{2}\kappa(y,e_n)>0\]
for all $k \in \N$ and $\tau \in T_\omega(A)$. We conclude that $x$ and $y$ satisfy \eqref{eq:cond_xy}. 
\end{proof}
A second ingredient in the proof of Theorem \ref{theorem:reduction_tracial_property} is a generalization of Kirchberg's concept of a $\sigma$-ideal \cite[Definition 1.5]{Kirchberg} to the equivariant context. For $\Z$-actions, this was already introduced by Liao in \cite[5.5]{Liao}. The definition was generalized to actions of more general countable discrete groups in {\cite[Definition 4.1]{SsaDynSyst2}}. It reduces to Kirchberg's original definition when $G = \{1\}$. 
 \begin{definition}[{\cite[Definition 4.1]{SsaDynSyst2}}]
Let $A$ be a $C^*$-algebra with an action $\alpha\colon G \acts A$ of a countable discrete group. An $\alpha$-invariant ideal $J \subset A$ is called an \emph{$G$-$\sigma$-ideal} if for every separable, $\alpha$-invariant $C^*$-subalgebra $C \subset A$ there exists a positive contraction $e \in (J \cap C')^\alpha$ such that $ec=c$ for all $c \in J \cap C$. 
\end{definition}

These kind of ideals are mainly useful because of the following interesting property they have:
\begin{prop}[{\cite[Proposition 4.5]{SsaDynSyst2}}]\label{prop:sigma_ideal}
Let $A$ be a $C^*$-algebra with an action $\alpha\colon G \acts A$ of a countable discrete group $G$, and let $J \subset A$ be a $G$-$\sigma$-ideal. Let $B=A/J$ and $\beta\colon G \acts B$ the action induced by $\alpha$. Denote by $\pi\colon A \rightarrow B$ the equivariant quotient map. Then
\begin{enumerate}
\item For every $\alpha$-invariant, separable $C^*$-subalgebra $D \subset A$, the induced map $\pi\colon A \cap D' \rightarrow B \cap \pi(D)'$ is surjective and its kernel $J \cap D'$ is a $G$-$\sigma$-ideal in $A \cap D'$.
\item For every $\beta$-invariant, separable $C^*$-algebra $C \subset B$, there exists an equivariant c.p.c.\ order zero map $\psi\colon (C, \beta) \rightarrow (A, \alpha)$ such that $\pi \circ \psi = \op{id}_C$. 
\end{enumerate}  
\end{prop}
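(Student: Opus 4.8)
The plan is to treat the two parts separately, using the defining property of the $G$-$\sigma$-ideal $J$ as essentially the only tool in part (1), and then bootstrapping part (1) into part (2).

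For part (1), first observe that the kernel of the restricted map $\pi|_{A \cap D'}$ is exactly $\{a \in A \cap D' : a \in J\} = J \cap D'$, and that this is an $\alpha$-invariant ideal of $A \cap D'$: it is an ideal since $J$ is an ideal of $A$ and $D'$ is a subalgebra, and it is $\alpha$-invariant because $D$ is $\alpha$-invariant (hence so is $D'$) and $J$ is $\alpha$-invariant by hypothesis. For surjectivity I would take $\bar b \in B \cap \pi(D)'$, choose any lift $a_0 \in A$, and form the separable $\alpha$-invariant subalgebra $C := C^*(D \cup \{\alpha_g(a_0) : g \in G\})$ (separable since $G$ is countable and $D$ is separable). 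Since $\bar b$ commutes with $\pi(D)$ we have $[a_0,d] \in J$ for all $d \in D$, and also $[a_0,d] \in C$, so $[a_0,d] \in J \cap C$. Applying the $G$-$\sigma$-ideal property to $C$ gives a positive contraction $e \in (J \cap C')^\alpha$ with $ec = c$ for all $c \in J \cap C$. Setting $a := a_0 - e a_0 \in A$ and using that $e$ commutes with $D \subseteq C$, one computes $[a,d] = [a_0,d] - e[a_0,d] = [a_0,d] - [a_0,d] = 0$, so $a \in A \cap D'$; since $\pi(e)=0$ we get $\pi(a) = \bar b$, proving surjectivity.

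To see that $J \cap D'$ is itself a $G$-$\sigma$-ideal in $A \cap D'$, I would take an arbitrary separable $\alpha$-invariant $C \subseteq A \cap D'$ and apply the $G$-$\sigma$-ideal property of $J$ in $A$ to the separable $\alpha$-invariant algebra $\tilde C := C^*(C \cup D)$. The resulting invariant positive contraction $e \in (J \cap \tilde C')^\alpha$ commutes with $D$, so $e \in J \cap D'$; it commutes with $C$ and is $\alpha$-fixed, so $e \in (J \cap D' \cap C')^\alpha$; and since $C \subseteq D'$ we have $J \cap D' \cap C = J \cap C \subseteq J \cap \tilde C$, whence $ec = c$ on this set. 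This is exactly the required witness. For part (2) I would reduce to a lifting problem for $*$-homomorphisms out of a cone: by the structure theorem for c.p.c.\ order zero maps, an equivariant c.p.c.\ order zero map $\psi: (C,\beta) \to (A,\alpha)$ with $\pi \circ \psi = \op{id}_C$ is the same datum as an equivariant $*$-homomorphism $\Psi: (C_0((0,1]) \otimes C,\, \op{id} \otimes \beta) \to (A,\alpha)$ lifting the canonical equivariant map $C_0((0,1]) \otimes C \to B$, $f \otimes c \mapsto f(1)\, c$. Since $C$ is separable, I would construct $\Psi$ by an inductive approximation along a countable generating set of $C$: at each finite stage only finitely many order-zero relations must be arranged, and these can be lifted into the relative commutant of the data already built. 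This is where part (1) enters, as it guarantees that these relative commutants again carry the $G$-$\sigma$-ideal structure; the $\alpha$-invariant local units $e$ it supplies absorb the multiplicativity defects coming from $J$ while keeping each correction $\alpha$-fixed, and a suitable choice of tolerances lets the construction converge.

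The main obstacle is part (2): one must simultaneously achieve exact preservation of orthogonality (order-zero-ness), exact equivariance, and exact lifting in the limit, none of which survive a naive averaging procedure, since sums of order zero maps are not order zero. The decisive point is that the definition of a \emph{$G$-}$\sigma$-ideal builds $\alpha$-invariance into the correcting elements $e$ from the outset, so equivariance is maintained at every stage rather than imposed afterwards; together with the stability of the $\sigma$-ideal property under passage to relative commutants established in part (1), this is what turns the approximate construction into an honest equivariant order-zero lift.
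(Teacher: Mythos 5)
First, a point of reference: the paper does not prove this proposition at all --- it is imported from \cite[Proposition 4.5]{SsaDynSyst2} (Szab\'o's equivariant generalization of Kirchberg's result on $\sigma$-ideals). So your proposal can only be compared with that standard proof.

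Your part (1) is correct and is essentially that standard argument: the surjectivity step (lift $\bar b$ to $a_0$, enlarge $D$ by the $G$-orbit of $a_0$ to a separable invariant algebra $C$, take the invariant contraction $e \in (J\cap C')^\alpha$ supplied by the $\sigma$-ideal property, and replace $a_0$ by $(1-e)a_0$) and the stability step (apply the $\sigma$-ideal property of $J$ to $C^*(C\cup D)$) are both complete modulo routine checks.

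Part (2), however, has a genuine gap. Your plan is an inductive approximation in which, at each finite stage, ``finitely many order-zero relations'' are arranged by lifting into relative commutants, with tolerances chosen so that the construction converges. Three problems. (i) Orthogonality relations cannot even be \emph{approximately} arranged by generic lifting: if $c_1c_2=0$ in $C$, arbitrary lifts $d_1,d_2$ satisfy only $d_1d_2\in J$, and $\|d_1d_2\|$ need not be small, so every correction must already pass through the $\sigma$-ideal element; nor can the relations be arranged exactly by abstract projectivity, since cones over separable $C^*$-algebras are not projective in general (an order zero lift of $\op{id}_C$ is in particular a c.p.c.\ splitting, and by Haagerup--Thorbj{\o}rnsen there are extensions of $C^*_r(F_2)$ by the compacts admitting none). (ii) Lifting new generators ``into the relative commutant of the data already built'' is structurally wrong for this problem: the elements of $C$ need not commute with one another, so their lifts cannot be sought in each other's commutants; the relative-commutant statement of part (1) is adapted to lifting \emph{central} elements (as in Example \ref{example:sigma_ideal}), not a general separable $C$. (iii) Even granting approximate stages, you never specify the correction mechanism that makes the limit an exact order zero map, an exact lift, and exactly equivariant; ``a suitable choice of tolerances'' is where the entire proof would have to live.

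The missing idea is that no induction is needed: one application of the $G$-$\sigma$-ideal property produces an exact formula. Choose lifts $x_n\in A$ of a dense sequence in $C$ and set $D:=C^*(\{\alpha_g(x_n): g\in G,\, n\in\N\})$; this is separable, $\alpha$-invariant, and $\pi(D)=C$ (here $\beta$-invariance of $C$ is used). Take a positive contraction $e\in (J\cap D')^\alpha$ with $ec=c$ for all $c\in J\cap D$, and define
\[
\psi(c):=(1-e)^{1/2}\, d\, (1-e)^{1/2}, \qquad d\in D \text{ any lift of } c.
\]
This is well defined, since two lifts differ by some $x\in J\cap D$ and $ex=x$ forces $(1-e)^{1/2}x=0$; it is c.p.c.\ because it is induced by the c.p.c.\ map $d\mapsto (1-e)^{1/2}d(1-e)^{1/2}$ on $D$, which vanishes on $J\cap D$ and hence factors through $D/(J\cap D)\cong C$; it is order zero because $c_1c_2=0$ gives $d_1d_2\in J\cap D$, whence, using $e\in D'$, $\psi(c_1)\psi(c_2)=(1-e)^{1/2}(1-e)d_1d_2(1-e)^{1/2}=0$; it is equivariant because $\alpha_g(e)=e$ and $\alpha_g(d)\in D$ is a lift of $\beta_g(c)$; and $\pi\circ\psi=\op{id}_C$ because $\pi(e)=0$. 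Note in particular that, contrary to your outline, part (1) plays no role in part (2).
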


\begin{example}\label{example:sigma_ideal}
A well-known example occurs when $A$ is a $C^*$-algebra with $T(A)$ non-empty and compact, and $\alpha\colon G \acts A$ is an action of a countable discrete group. 
In this case the trace kernel ideal $J_A$ is a $G$-$\sigma$-ideal of $A_\omega$. 
Indeed, let $C \subset A_\omega$ be a separable, $\alpha_\omega$-invariant $C^*$-subalgebra. 
Then the ideal $J_A \cap C$ admits a countable approximate unit that is approximately $\alpha_\omega$-invariant and quasi-central relative to $C$ by \cite[Lemma 1.4]{Kasparov} and hence, there is a sequence $(e_n)_{n \in \N} \in J_A$ such that 
\begin{align*}
\|e_n - \alpha_{\omega,g}(e_n)\| &\rightarrow 0 \text{ for all }g \in G,\\
\|e_nc-c\| &\rightarrow 0 \text { for all } c \in J_A \cap C \text{, and } \\
\|[e_n,c]\| &\rightarrow 0 \text{ for all }c \in C.\end{align*} Applying Kirchberg's $\e$-test \cite[Lemma A.1]{Kirchberg} to a countable set of conditions corresponding to a countable dense subset of $C$ then yields the required element $e$. 

If $A$ is separable, it follows from applying Proposition \ref{prop:sigma_ideal} to the $G$-$\sigma$-ideal $J_A \subset A_\omega$ that the canonical map 
\[{A_\omega \cap A' \rightarrow A^\omega \cap A'}\] is surjective, and that its kernel $J_A \cap A'$ is again a $G$-$\sigma$-ideal of $A_\omega \cap A'$. Recall from Remark \ref{remark:limit_traces_F(A)} that the ideal $A_\omega \cap A^\perp \subset A_\omega\cap A'$ is also a subset of $J_A \cap A'$. An easy argument using the definition of a $G$-$\sigma$-ideal (see \cite[Proposition 5.17]{EquivariantSI} for a general proof) shows that  
\[\mathcal{J}_A := (J_A \cap A')/(A_\omega \cap A^\perp)\] is a $G$-$\sigma$-ideal in $(A_\omega \cap A')/(A_\omega \cap A^\perp) = F_\omega(A)$. Thus, the kernel of the natural quotient map \[F_\omega(A) \rightarrow A^\omega \cap A'\] is also $G$-$\sigma$-ideal.
\end{example}

\begin{proof}[{Proof of Theorem \ref{theorem:reduction_tracial_property}}]
We first prove the implication $(1) \Rightarrow (2)$. 
Assume that $\alpha$ and $\alpha \otimes \op{id}_\mathcal{Z}$ are cocycle conjugate. 
Since $A$ is separable, this is equivalent to the existence of a unital $*$-homomorphism $\mathcal{Z} \rightarrow F_\omega(A)^{\tilde{\alpha}_\omega}$, see \cite[Corollary 3.8]{SsaDynSyst}. 
Composing with the canonical quotient map gives a unital $*$-homomorphism $\phi\colon \mathcal{Z} \rightarrow (A^\omega \cap A')^{\alpha^\omega}$. The unit ball of $A^\omega$ is complete for the topology induced by the $\|\cdot\|_{2,T_\omega(A)}$-norm (e.g.\ \cite[Lemma 1.6]{CETWW}), and hence the same is true for the unit ball of $(A^\omega \cap A')^{\alpha^\omega}$. 
Let $\tau_\mathcal{Z}$ denote the unique trace on $\mathcal{Z}$. Since $\phi$ is $(\|\cdot\|_{2, \tau_{\mathcal{Z}}}, \|\cdot\|_{2,T_{\omega}(A)})$-contractive, by Kaplansky's density theorem it can be extended to a map from $\pi_{\tau_{\mathcal{Z}}}(\mathcal{Z})''$, the weak closure of $\mathcal{Z}$ under the GNS representation associated to $\tau_\mathcal{Z}$. 
For any $n \in \N$, composing with a unital embedding $M_n \hookrightarrow  \pi_{\tau_{\mathcal{Z}}}(\mathcal{Z})'' \cong \mathcal{R}$ yields a unital $*$-homomorphism $M_n \rightarrow (A^\omega \cap A')^{\alpha^\omega}$.

 To prove the other implication, assume that there exists a unital $*$-homomorphism $M_n \rightarrow (A^\omega \cap A')^{\alpha^\omega}$ for some $n \geq 2$. As explained in Example \ref{example:sigma_ideal}, the ideal 
 \[\mathcal{J}_A = (J_A \cap A')/(A_\omega \cap A^\perp) \subset F_\omega(A)\] is a $G$-$\sigma$-ideal and hence it follows from Proposition \ref{prop:sigma_ideal} that there is a c.p.c.\ order zero map $\phi\colon M_n \rightarrow F_\omega(A)^{\tilde{\alpha}_\omega}$ lifting this map. 
 In particular $1-\phi(1) \in \mathcal{J}_A$. Let $p \in M_n$ denote a rank one projection. Then $\phi(p) - \phi(p)^m \in \mathcal{J}_A$ for all $m \in \N$, so $\phi(p)$ agrees with all its powers on limit traces. By uniqueness of the tracial state on $M_n$, we get
\[\inf_{m \in \N}\inf_{\tau \in T_\omega(A)} \tau(\phi(p)^m)=1/n >0.\]
The assumptions on $A$ guarantee it has equivariant property (SI) relative to all actions of countable discrete amenable groups, see \cite[Theorem B]{EquivariantSI}.  Using the equivalent formulation of equivariant property (SI) from Proposition \ref{prop:equivalent_statement_SI}, we can find an ${s \in F_\omega(A)^{\tilde{\alpha}_\omega}}$ such that $s^*s = 1- \phi(1)$ and $\phi(p)s = s$. By \cite[Proposition 5.1]{RordamWinter} this implies the existence of a unital $*$-homomorphism from the prime dimension drop algebra $Z_{n, n+1} \rightarrow F_\omega(A)^{\tilde{\alpha}_\omega}$. Since we can do this for all $n \geq 2$ and $\mathcal{Z}$ is an inductive limit of such dimension drop algebras, it follows by a standard argument (analogous to the proof of \cite[Proposition 2.2]{TomsWinter08}) that there exists a unital $*$-homomorphism $\mathcal{Z} \rightarrow F_\omega(A)^{\tilde{\alpha}_\omega}$. As already stated, this is equivalent to equivariant $\mathcal{Z}$-stability of $\alpha$. 
\end{proof}

\section{Rokhlin dimension with commuting towers for strongly outer actions}\label{section:reduct_2}
In this section we show that in order to prove the second main result of this paper (Theorem \ref{theorem:intro_main2}) about the Rokhlin dimension with commuting towers of certain strongly outer\footnote{Recall that we call an action $\alpha\colon G \acts A$ \emph{strongly outer} if for every $g \in G\setminus\{e\}$ and every $\tau \in T(A)^{\alpha_g}$, the weak extension of $\alpha_g$ in the GNS representation of $A$ associated to $\tau$ is outer.} $\Z$-actions, it suffices to prove a more manageable tracial result. 
 The notion of Rokhlin dimension with commuting towers was originally introduced by Hirshberg--Winter--Zacharias in \cite{HirshbergWinterZacharias} for actions of the integers and finite groups. It was generalized to actions of residually finite groups by Szab\'o--Wu--Zacharias in \cite{SzaboWuZacharias}:

\begin{definition}{\cite[Definition 10.2]{SzaboWuZacharias}}
Let $\alpha\colon G \acts A$ be an action of a residually finite group $G$ on a separable $C^*$-algebra.
\begin{enumerate}
\item Let $H \subset G$ be a subgroup of finite index. The Rokhlin dimension of $\alpha$ with commuting towers relative to $H$, denoted $\op{dim}^c_{\op{Rok}}(\alpha,H)$, is the smallest natural number $d$ such that there exist equivariant c.p.c.\ order zero maps 
\[\phi^{(0)}, \hdots, \phi^{(d)}\colon (C(G/H),G\text{-shift}) \rightarrow (F_\omega(A), \tilde{\alpha}_\omega)\]
with pairwise commuting ranges such that
\[\phi^{(0)}(1) + \hdots + \phi^{(d)}(1) = 1.\]
\item The Rokhlin dimension of $\alpha$ with commuting towers is defined as 
\[\dim^c_{\op{Rok}}(\alpha) := \sup\{\op{dim}^c_{\op{Rok}}(\alpha,H) \mid H \subseteq G \text{ of finite index}\}\]
\end{enumerate}
\end{definition}

 The following result, which is a direct consequence of \cite[Corollary 5.1]{RokhlinDimension} (using \cite[Theorem 4.10]{SzaboWuZacharias} to verify one additional necessary condition), illustrates the importance of having finite Rokhlin dimension:
\begin{theorem}\label{theorem:finite_Rokhlin_dimension}
Let $\alpha\colon G \acts A$ be an action of a finitely generated, virtually nilpotent group $G$ on a separable $C^*$-algebra $A$. Let $\mathcal{D}$ be a strongly self-absorbing $C^*$-algebra with $A \cong A \otimes \mathcal{D}$. If $\op{dim}^c_{\op{Rok}}(\alpha) < \infty$, then $\alpha \simeq_{\mathrm{cc}} \alpha \otimes \delta$ for all strongly self-absorbing actions $\delta\colon G \acts \mathcal{D}$.
\end{theorem}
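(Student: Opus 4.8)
The plan is to treat Theorem~\ref{theorem:finite_Rokhlin_dimension} not as something to be proved from the ground up, but as an application of the general absorption machinery for strongly self-absorbing actions, so that the real task is to verify the hypotheses of \cite[Corollary 5.1]{RokhlinDimension}. The conceptual backbone is the equivariant McDuff-type characterization underlying that machinery: for a separable $A$ with action $\alpha: G \acts A$ and a strongly self-absorbing action $\delta: G \acts \mathcal{D}$ in the sense of Definition~\ref{def:ssa}, one has $\alpha \simeq_{\mathrm{cc}} \alpha \otimes \delta$ exactly when there is a \emph{unital equivariant} $*$-homomorphism $(\mathcal{D},\delta) \to (F_\omega(A), \tilde{\alpha}_\omega)$ (the analogue of the criterion $\mathcal{Z} \to F_\omega(A)^{\tilde{\alpha}_\omega}$ used via \cite[Corollary 3.8]{SsaDynSyst} in the trivial-action case). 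Thus the entire statement reduces to producing such an equivariant embedding for each $\delta$, and this is precisely what finite Rokhlin dimension with commuting towers is engineered to deliver.

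First I would record the inputs at our disposal: $G$ finitely generated and virtually nilpotent, hence residually finite (so that $\op{dim}^c_{\op{Rok}}(\alpha)$ is even well-defined) and of polynomial growth; $A$ separable with $A \cong A \otimes \mathcal{D}$; and $\op{dim}^c_{\op{Rok}}(\alpha) \leq d < \infty$. Since $A$ is $\mathcal{D}$-stable, the ordinary non-equivariant McDuff theorem already supplies a unital $*$-homomorphism $\mathcal{D} \to F_\omega(A)$, which however ignores the $G$-action. The function of the commuting Rokhlin towers is to upgrade this to an \emph{equivariant} embedding: for each finite-index subgroup $H$ the maps $\phi^{(0)}, \dots, \phi^{(d)}: (C(G/H), G\text{-shift}) \to (F_\omega(A), \tilde{\alpha}_\omega)$ with pairwise commuting ranges and $\phi^{(0)}(1) + \dots + \phi^{(d)}(1) = 1$ furnish a partition of unity by order-zero tower maps that can be interlaced with the $\mathcal{D}$-copy; the commuting-towers hypothesis is exactly what allows the $\mathcal{D}$-embedding and the tower maps to coexist inside $F_\omega(A)$ and to be averaged, along a suitable exhaustion of $G$ by finite-index subgroups, into an honestly $\delta$-equivariant map in the limit.

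With this setup in place I would invoke \cite[Corollary 5.1]{RokhlinDimension}, whose running hypotheses are finiteness of $\op{dim}^c_{\op{Rok}}(\alpha)$ together with $\mathcal{D}$-stability of $A$, plus one structural condition on the pair $(G,\delta)$ ensuring that the model action assembled from the Rokhlin data is cocycle conjugate to $\delta$ itself. This last item is the ``additional necessary condition,'' and I expect it to be discharged by \cite[Theorem 3.10]{SzaboWuZacharias}, which for the class of groups at hand provides the requisite structural fact about finite-index subgroups (e.g.\ a cofinal sequence of finite-index normal subgroups realizing the Rokhlin dimension, or the identification needed to match the model action to $\delta$). Since ``finitely generated virtually nilpotent'' is precisely the polynomial-growth class, I anticipate this class is exactly the one for which that verification goes through cleanly, which explains the hypothesis on $G$. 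Feeding all of this into the corollary yields $\alpha \simeq_{\mathrm{cc}} \alpha \otimes \delta$ for every strongly self-absorbing $\delta: G \acts \mathcal{D}$.

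The main obstacle is conceptual rather than computational, and it sits entirely in the construction of the equivariant embedding: reconciling the non-equivariant $\mathcal{D}$-absorption coming from $A \cong A \otimes \mathcal{D}$ with the $G$-action. This is the one step where commuting towers, as opposed to merely finite Rokhlin dimension, are indispensable, since without commutation one controls only nuclear-dimension-type invariants of the crossed product and cannot run the averaging argument that lands the embedding equivariantly. Because this construction is precisely the content packaged by \cite[Corollary 5.1]{RokhlinDimension}, the only genuine work remaining on our side is the bookkeeping needed to check its hypotheses for finitely generated virtually nilpotent $G$, and that is where I would concentrate the effort.
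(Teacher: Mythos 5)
Your proposal matches the paper's proof in both structure and substance: the paper deduces the theorem directly from \cite[Corollary 5.1]{RokhlinDimension}, using \cite[Theorem 3.10]{SzaboWuZacharias} to verify the one additional hypothesis of that corollary, exactly as you propose. The only slight imprecision is your guess about what that extra hypothesis is --- it is a coarse-geometric condition on $G$ alone, namely that $G$ admits a box space of finite asymptotic dimension (which is what \cite[Theorem 3.10]{SzaboWuZacharias} provides for finitely generated, virtually nilpotent groups), rather than a compatibility condition between the Rokhlin data and the model action $\delta$.
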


The next theorem shows exactly how the problem of proving that a certain $\Z$-action has finite Rokhlin dimension with commuting towers can be reduced to a tracial problem:

\begin{theorem}\label{theorem:reduction_rokhlin_dimension}
Let $A$ be an algebraically simple, separable, nuclear, $\mathcal{Z}$-stable $C^*$-algebra with $T(A)$ non-empty and compact. Fix an action $\alpha\colon \Z \acts A$. Assume that for each unitary representation $\nu\colon \Z \rightarrow \mathcal{U}(M_n)$ there exists a unital equivariant $*$-homomorphism $(M_n, \op{Ad}(\nu)) \rightarrow (A^\omega \cap A', \alpha^\omega)$. Then $\op{dim}^c_{\op{Rok}}(\alpha) \leq 2$. 
\end{theorem}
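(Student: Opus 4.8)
The plan is to verify the bound for each finite-index subgroup separately. The only finite-index subgroups of $\Z$ are the $p\Z$, and $\Z/p\Z \cong \{0,\dots,p-1\}$ with the cyclic shift, so it suffices to produce, for each $p \in \N$, three equivariant c.p.c.\ order zero maps $\phi^{(0)},\phi^{(1)},\phi^{(2)}\colon (C(\Z/p\Z),\text{shift})\to(F_\omega(A),\tilde\alpha_\omega)$ with pairwise commuting ranges and $\phi^{(0)}(1)+\phi^{(1)}(1)+\phi^{(2)}(1)=1$. Throughout I would work with the equivariant surjection $F_\omega(A)\to A^\omega\cap A'$ from Example \ref{example:sigma_ideal}, whose kernel $\mathcal{J}_A$ is a $G$-$\sigma$-ideal; this is the device that transports tracial ($A^\omega\cap A'$-level) data to genuine central-sequence ($F_\omega(A)$-level) data, exactly as $\mathcal{J}_A$ was used in the proof of Theorem \ref{theorem:reduction_tracial_property}.

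First I would feed the hypothesis the cyclic shift: let $u\in\mathcal{U}(M_p)$ be the permutation unitary implementing $l\mapsto l+1$, so that the diagonal subalgebra $\op{diag}\cong C(\Z/p\Z)$ is $\op{Ad}(u)$-invariant and the restriction of $\op{Ad}(u)$ to it is precisely the shift. Applying the assumption to $\nu(1)=u$ yields a unital equivariant $*$-homomorphism $(M_p,\op{Ad}(u))\to(A^\omega\cap A',\alpha^\omega)$, and restricting to the diagonal gives a unital equivariant $*$-homomorphism $\bar\Phi\colon(C(\Z/p\Z),\text{shift})\to(A^\omega\cap A',\alpha^\omega)$. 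In other words, at the level of the uniform tracial ultrapower the full (dimension-zero) Rokhlin tower is available for free. I would then invoke Proposition \ref{prop:sigma_ideal}(2) for the $G$-$\sigma$-ideal $\mathcal{J}_A$ to lift $\bar\Phi$ to an equivariant c.p.c.\ order zero map $\phi^{(0)}\colon(C(\Z/p\Z),\text{shift})\to(F_\omega(A),\tilde\alpha_\omega)$ with $\pi\circ\phi^{(0)}=\bar\Phi$. Since $\phi^{(0)}$ lifts a unital map, the defect $x:=1-\phi^{(0)}(1)$ is a positive, $\tilde\alpha_\omega$-fixed contraction lying in $\mathcal{J}_A$, hence $\tau(x)=0$ and so $\tau(x^j)=0$ for every $j\geq 1$ and every limit trace $\tau\in T_\omega(A)$.

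The remaining task is to fill this trace-zero defect $x$ with shift-compatible Rokhlin tower structure, and this is where equivariant property (SI) enters. Recall that $A$ has equivariant property (SI) relative to every action of a countable amenable group by \cite[Corollary 4.3]{EquivariantSI}, and $\Z$ is amenable. Taking $y=\phi^{(0)}(1)$ (which is $\tilde\alpha_\omega$-fixed with $\inf_k\inf_\tau\tau(y^k)=1$) together with $x$ in the formulation of Proposition \ref{prop:equivalent_statement_SI} produces a fixed element $s\in F_\omega(A)^{\tilde\alpha_\omega}$ with $s^*s=x$ and $ys=s$, implementing the absorption of the defect into the support of $\phi^{(0)}(1)$. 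The subtle point is that I need not merely to absorb $x$ but to organize the absorbing data into genuine \emph{equivariant} order zero towers commuting with $\phi^{(0)}$; to do so I would apply the hypothesis a second time to a suitably chosen unitary representation $\nu'\colon\Z\to\mathcal{U}(M_{n'})$ carrying a height-$p$ tower, lift the resulting tracial tower through $\mathcal{J}_A$, and use $s$ to insert it into the hereditary subalgebra cut down by $x$. Because a single such insertion is naturally ``interval-shaped'' while $\Z/p\Z$ is cyclic, two overlapping charts are needed to wrap the tower around, which produces the two additional maps $\phi^{(1)},\phi^{(2)}$ and accounts for the bound $2$. Finally I would note that all relations involved (order zero, commuting ranges, equivariance, and $\sum_j\phi^{(j)}(1)=1$) are finite and approximate in nature, so Kirchberg's $\e$-test \cite[Lemma A.1]{Kirchberg} upgrades the construction to exact maps into $F_\omega(A)$, yielding $\op{dim}^c_{\op{Rok}}(\alpha,p\Z)\le 2$ for every $p$ and hence $\op{dim}^c_{\op{Rok}}(\alpha)\le 2$.

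I expect the main obstacle to be precisely this last organizational step: converting the abstract (SI)-absorption of the trace-zero defect into three concretely commuting, equivariant, order zero towers summing to the identity, while controlling that exactly two extra towers suffice. Everything before it --- the tracial tower, the $\sigma$-ideal lift, and the trace-zero estimate on the defect --- is essentially bookkeeping parallel to the proof of Theorem \ref{theorem:reduction_tracial_property}; the genuine work lies in the shift-equivariant cyclic gluing that produces the commuting-tower structure and pins down the dimension bound.
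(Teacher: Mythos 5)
There is a genuine gap, and you have in fact located it yourself: the entire content of the theorem lies in the step you defer to the end, and your proposed mechanism for it would not work as described. Everything up to and including the (SI)-absorption is indeed bookkeeping parallel to Theorem \ref{theorem:reduction_tracial_property}, but the passage from the fixed contraction $s$ (with $s^*s=x$ and $ys=s$) to \emph{three pairwise commuting, equivariant, order zero towers summing to $1$} is never constructed. Note also that (SI) gives you no usable handle here: conjugation $a \mapsto s^*as$ moves elements from the hereditary subalgebra of $ss^*$ into that of $x$, but nothing forces such conjugates to be order zero images of $C(\Z/p\Z)$, to commute with the range of $\phi^{(0)}$, or to be permuted correctly by $\tilde\alpha_\omega$ --- $s$ is merely a fixed contraction, and ``two overlapping charts to wrap the tower around'' is a hope, not an argument. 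Your opening move (applying the hypothesis to the cyclic shift on $M_p$ and restricting to the diagonal) also does not feed into any mechanism that finishes the proof.

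The paper closes exactly this gap by a change of viewpoint: instead of trying to build the towers inside $F_\omega(A)$ by hand, it builds them once and for all inside a model. By \cite[Proposition 5.1]{RordamWinter}, the prime dimension drop algebra $Z_{N,N+1}$ is the universal unital $C^*$-algebra $Z^U_{N,N+1}$ generated by a c.p.c.\ order zero map $\psi\colon M_N\to Z^U_{N,N+1}$ and a contraction $v$ with $1-\psi(1)=v^*v$ and $\psi(e_{1,1})v=v$, and any unitary representation $\nu\colon\Z\to\mathcal{U}(M_{N-1})$ induces an action $\delta^\nu$ on it. The key imported input --- which your sketch is missing and which your ``charts'' intuition is gesturing at --- is \cite[Section 2]{ActionsTorsionFree}: for every $\e>0$ and $n\geq 2$ there are $N$ and $\nu$ such that $(Z^U_{N,N+1},\delta^\nu)$ already contains three pairwise commuting families of positive contractions, each internally orthogonal, summing exactly to $1$, and approximately cyclically shifted by $\delta^\nu$. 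Given that, the hypothesis applied to $\op{Ad}(1\oplus\nu)$ on $M_N$, the $\sigma$-ideal lift, and a \emph{single} application of (SI) produce precisely the generators $(\phi, s)$ and relations of $Z^U_{N,N+1}$ inside $F_\omega(A)$; the universal property then yields a unital $*$-homomorphism $\chi\colon Z^U_{N,N+1}\to F_\omega(A)$, automatically equivariant because it is so on generators, and pushing forward the model towers gives the approximate Rokhlin data needed for Kirchberg's $\e$-test. So the correct repair of your proof is not to organize the (SI) data directly, but to recognize that the (SI) data \emph{is} an equivariant copy of the dimension drop model, and to quote (or reprove) the model-tower construction of \cite[Section 2]{ActionsTorsionFree}.
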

\begin{proof}
This result follows from a similar argument as in the proof of \cite[Theorem 2.14]{ActionsTorsionFree}, but since the context there differs a bit from ours we provide a proof here for clarity. By Kirchberg's $\e$-test (\cite[Lemma A.1]{Kirchberg}) it suffices to show that for any fixed $\e>0$ and $n \geq 2$, there exist pairwise commuting positive contractions $a_j,b_j,c_j \in F_\omega(A)$ for $j \in 0, \hdots, n-1$ such that
\begin{enumerate}[itemsep=1ex,topsep=1ex]
\item\label{firstitem} $1=\sum_{j=0}^{n-1} (a_j + b_j + c_j)$;
\item the collections $\{a_j\}_{j=0}^{n-1}, \{b_j\}_{j=0}^{n-1}$ and $\{c_j\}_{j=0}^{n-1}$ each consist of pairwise orthogonal elements;
\item $\|\tilde{\alpha}_\omega(a_j) - a_{j+1 \bmod n}\| < \e;$
\item $\|\tilde{\alpha}_\omega(b_j) - b_{j+1 \bmod n}\| < \e;$ and
\item\label{lastitem} $\|\tilde{\alpha}_\omega(c_j) - c_{j+1 \bmod n}\| < \e$.
\end{enumerate}  
It follows from \cite[Proposition 5.1]{RordamWinter} that for every $N \geq 2$, the prime dimension drop algebra $Z_{N,N+1}$ is isomorphic to the universal unital $C^*$-algebra $Z^U_{N,N+1}$ generated by a contraction $v \in Z^U_{N,N+1}$ and the image of a c.p.c.\ order zero map $\psi\colon M_N \rightarrow Z^U_{N,N+1}$ satisfying $1- \psi(1) = v^*v$ and $\psi(e_{1,1})v=v$, where $e_{1,1}$ denotes the rank one projection in $M_N$ with a one in the upper left corner. When $\nu\colon \Z \rightarrow \mathcal{U}(M_{N-1})$ is a unitary representation, we denote by $\delta^\nu\colon \Z \acts Z_{N,N+1}^U$ the action determined by $\delta^\nu_z(v)=v$ and $\delta_z^\nu \circ \psi = \psi \circ \op{Ad}(1 \oplus \nu_z)$ for all $z \in \Z$.
 Fix $\e>0$ and $n \geq 2$. It was shown in \cite[Section 2]{ActionsTorsionFree} that there exists an $N \geq 2$ dependent on $\e$ and $n$ and a unitary representation $\nu\colon \Z \rightarrow \mathcal{U}(M_{N-1})$ such that the $C^*$-algebra $Z^U_{N,N+1}$ contains elements with the necessary  properties \ref{firstitem}-\ref{lastitem} above (replacing the action $\tilde{\alpha}_\omega$ by $\delta^\nu$). As a consequence, it suffices to prove that given $N \geq 2$ and $\nu\colon \Z \rightarrow \mathcal{U}(M_{N-1})$, there exists a unital equivariant $*$-homomorphism from $(Z^U_{N,N+1}, \delta^\nu)$ to $(F_\omega(A), \tilde{\alpha}_\omega)$. This is precisely what we will do.
 
By assumption there exists a unital equivariant $*$-homomorphism 
\[(M_N, \op{Ad}(1 \oplus \nu)) \rightarrow (A^\omega \cap A',\alpha^\omega).\] Recall from Example \ref{example:sigma_ideal} that the ideal 
 \[\mathcal{J}_A = (J_A \cap A')/(A_\omega \cap A^\perp) \subset F_\omega(A)\] is a $\Z$-$\sigma$-ideal. Therefore, it follows from Proposition \ref{prop:sigma_ideal} that there is an equivariant c.p.c.\ order zero map $\phi\colon (M_N, \op{Ad}(1 \oplus \nu)) \rightarrow (F_\omega(A),{\tilde{\alpha}_\omega})$ lifting this map. In particular it holds that $1-\phi(1) \in \mathcal{J}_A$. Since $e_{1,1} \in M_N$ is invariant under $\op{Ad}(1\oplus \nu)$, we get $\phi(e_{1,1}) \in F_\omega(A)^{\tilde{\alpha}_\omega}.$ Moreover, since $e_{1,1}$ is a projection it also holds that $\phi(e_{1,1}) - \phi(e_{1,1})^m \in \mathcal{J}_A$ for all $m \in \N$, so $\phi(e_{1,1})$ agrees with all its powers on limit traces. By uniqueness of the tracial state on $M_N$, we get
\[\inf_{m \in \N}\inf_{\tau \in T_\omega(A)} \tau(\phi(e_{1,1})^m)=1/N >0.\]
The assumptions on $A$ imply that it has equivariant property (SI) relative to all actions of countable amenable groups, see \cite[Corollary 4.3]{EquivariantSI}. By the equivalent formulation of property (SI) from Proposition \ref{prop:equivalent_statement_SI}, this means we can find a positive contraction ${s \in F_\omega(A)^{\tilde{\alpha}_\omega}}$ such that $s^*s = 1- \phi(1)$ and $\phi(e_{1,1})s = s$. By definition of the universal $C^*$-algebra $Z_{N,N+1}^U$ this implies the existence of a unique unital $*$-homomorphism $\chi\colon Z^U_{N, N+1} \rightarrow F_\omega(A)$ such that $\chi(v) = s$ and $\chi \circ \psi =\phi$. In particular $\chi$ must also be equivariant with respect to $\delta^\nu$ and $\tilde{\alpha}_\omega$. This concludes the proof.
\end{proof}
\section{Systems generated by ranges of pairwise commuting order zero maps}\label{section:syst_order_zero}
In the previous two sections we demonstrated that in order to prove the main results of this paper about automorphisms $\alpha  \in \op{Aut}(A)$ or the corresponding actions $\alpha\colon \Z \acts A$, it suffices to find unital equivariant $*$-homomorphisms $(M_n, \op{Ad}(v)) \rightarrow (A^\omega \cap A',{\alpha^\omega})$ for all kinds of unitaries $v \in M_n$.  This problem can still be reduced a bit further: it suffices to find equivariant pairwise commuting c.p.c.\ order zero maps $\phi_i\colon (M_n, \op{Ad}(v)) \rightarrow (A^\omega \cap A',{\alpha^\omega})$ for $i=1, \hdots, m$ such that $\sum_{i=1}^m \phi_i(1) = 1$. This result is probably well known to experts, but does not appear explicitly in the literature in this form. We will prove it here for the sake of completeness. In the next section we will build on this and prove a slightly adapted, approximate version better suited to our needs (Lemma \ref{lemma:join}).

The following construction and observations are taken from \cite[Section 3]{RokhlinDimension} 
and have their origins in \cite[Section 5]{HirshbergWinterZacharias}:
\begin{notation}
Let $D_1, \hdots, D_m$ be unital $C^*$-algebras. For $t \in [0,1]$ and $j=1, \hdots, m$ we denote
\[D_j^{(t)} := \begin{cases}
D_j & \text{if } t >0,\\
\C \cdot 1_{D_j} & \text{if } t=0.
\end{cases}\]
For a tuple $\vec{t} = (t_1, \hdots, t_m) \in [0,1]^m$, we define
\[D^{(\vec{t})} := D_1^{(t_1)} \otimes_\textup{max} \hdots \otimes_\textup{max} D_m^{(t_m)}.\]
Consider the simplex 
\[\Delta^{(m)}:= \{\vec{t} \in [0,1]^m \mid t_1 + \hdots + t_m = 1\}\]
and define
\[\mathcal{E}(D_1, \hdots, D_m) := \{f \in C(\Delta^{(m)}, D_1 \otimes_{\textup{max}} \hdots \otimes_{\textup{max}} D_m) \mid f(\vec{t}) \in D^{(\vec{t})}\}.\]
For every $j = 1,  \hdots, m$ there is a canonical c.p.c.\ order zero map 
\[\eta_j \colon D_j \rightarrow \mathcal{E}(D_1, \hdots, D_m)\]
given by
\[\eta_j(d_j)(\vec{t}) = t_j \cdot (1_{D_1} \otimes \hdots \otimes 1_{D_{j-1}} \otimes d_j \otimes 1_{D_{j+1}} \otimes \hdots \otimes 1_{D_m}).\]
It is easy to check that the ranges of these $\eta_j$ generate $\mathcal{E}(D_1, \hdots D_m)$ as a $C^*$-algebra.

Let $G$ be a countable discrete group. Given actions $\delta^{(1)}\colon G \acts D_1, \hdots, \delta^{(m)}\colon G \acts D_m$, the action on $C(\Delta^{(m)}, D_1 \otimes_{\max} \hdots \otimes_{\max} D_m)$ that is defined fiberwise by $\delta^{(1)} \otimes \hdots \otimes \delta^{(m)}$ restricts to a well-defined action 
 \[\mathcal{E} \left(\delta^{(1)}, \hdots, \delta^{(m)}\right) \colon G \acts \mathcal{E}(D_1, \hdots , D_m).\]
\end{notation}

\begin{prop}\label{prop:universal_property}
Let $D_1, \hdots, D_m$ be $m$ unital $C^*$-algebras with actions ${\delta^{(1)}\colon G \acts D_1,\hdots,}$ ${\delta^{(m)}\colon G \acts D_m}$ of a countable discrete group $G$. Then $\mathcal{E}(D_1, \hdots D_m)$ and $\mathcal{E}\left(\delta^{(1)}, \hdots \delta^{(m)}\right)$ together with the c.p.c.\ order zero maps $\eta_j$ satisfy the following universal property:
If $B$ is a unital $C^*$-algebra with action $\beta\colon G \acts B$ and $\psi_j\colon \left(D_j,\delta^{(j)}\right) \rightarrow (B,\beta)$ for $j=1,\hdots, m$ are equivariant c.p.c.\ order zero maps with pairwise commuting ranges such that
\[\psi_1(1_{D_1}) + \hdots +  \psi_m(1_{D_m}) = 1_B,\]
then there exists a unique unital equivariant $*$-homomorphism \[\Psi\colon\left(\mathcal{E}(D_1, \hdots D_m), \mathcal{E}\left(\delta^{(1)}, \hdots, \delta^{(m)}\right)\right) \rightarrow (B,\beta)\] such that $\Psi \circ \eta_j = \psi_j$ for $j=1,\hdots, m$. 
\end{prop}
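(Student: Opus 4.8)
The plan is to construct the map $\Psi$ by using the universal property of the maximal tensor product together with the defining relations of $\mathcal{E}(D_1,\dots,D_m)$. The key structural observation is that $\mathcal{E}(D_1,\dots,D_m)$ is generated by the ranges of the order zero maps $\eta_j$, and these satisfy exactly the relations one needs: pairwise commuting ranges and $\eta_1(1) + \dots + \eta_m(1) = 1$ (since $\sum_j t_j = 1$ on $\Delta^{(m)}$). So $\mathcal{E}(D_1,\dots,D_m)$ should be thought of as the universal $C^*$-algebra generated by $m$ order zero copies of the $D_j$ with commuting ranges summing to a shared unit, and the whole statement is essentially that this universal description is correct and compatible with the $G$-actions.

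\textbf{Step 1: Construct $\Psi$ via the structure theorem for order zero maps.} By the Wolff/Winter--Zacharias structure theorem for c.p.c.\ order zero maps, each $\psi_j: D_j \to B$ corresponds to a $*$-homomorphism $\pi_j: D_j \to \mathcal{M}(C^*(\psi_j(D_j)))$ together with a positive central element $h_j = \psi_j(1_{D_j})$, via $\psi_j(d) = h_j \pi_j(d)$. The hypothesis that the ranges pairwise commute forces the $\pi_j$ and $h_j$ to commute across different indices (after passing into the multiplier algebra of $C^*(\psi_1(D_1),\dots,\psi_m(D_m))$), so the $\pi_j$ assemble into a single $*$-homomorphism $\pi: D_1 \otimes_{\max} \dots \otimes_{\max} D_m \to \mathcal{M}(B)$ by the universal property of $\otimes_{\max}$. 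I would then define $\Psi$ on a generic element $f \in \mathcal{E}(D_1,\dots,D_m)$ by integrating $\pi$ against the positive elements $h_j$: concretely, since $f$ is a limit of sums of products $\eta_{j_1}(d_{j_1})\cdots\eta_{j_k}(d_{j_k})$, one sets $\Psi(\eta_j(d)) = \psi_j(d) = h_j\pi_j(d)$ and extends multiplicatively. The point is that $\Psi$ is forced to be a unital $*$-homomorphism because the defining relations of $\mathcal{E}$ (encoded in the condition $f(\vec t) \in D^{(\vec t)}$, which collapses the $j$-th tensor factor to scalars when $t_j = 0$) match precisely the relations satisfied by the $\psi_j$; the joint unitality hypothesis $\sum_j\psi_j(1) = 1_B$ gives $\Psi(1_{\mathcal{E}}) = 1_B$.

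\textbf{Step 2: Uniqueness and well-definedness.} Uniqueness is immediate: the ranges of the $\eta_j$ generate $\mathcal{E}(D_1,\dots,D_m)$, and any homomorphism satisfying $\Psi \circ \eta_j = \psi_j$ is determined on these generators, hence everywhere. For well-definedness one must check that the prescription on generators respects all the relations in $\mathcal{E}$; I expect this to follow by verifying it fiberwise over $\Delta^{(m)}$, where the fiber $D^{(\vec t)}$ is an honest tensor product of those $D_j$ with $t_j > 0$, and the corresponding fiber of $\Psi$ is just the functional-calculus-weighted version of $\pi$.

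\textbf{Step 3: Equivariance.} Finally I would verify $\Psi \circ \mathcal{E}(\delta^{(1)},\dots,\delta^{(m)})_g = \beta_g \circ \Psi$. Since both sides are $*$-homomorphisms it suffices to check equality on the generating ranges $\eta_j(D_j)$, where it reduces to the equivariance of each $\psi_j$: the action $\mathcal{E}(\delta^{(1)},\dots,\delta^{(m)})$ is defined fiberwise by $\delta^{(1)}\otimes\dots\otimes\delta^{(m)}$ and fixes the base $\Delta^{(m)}$, so $\mathcal{E}(\delta^{(1)},\dots,\delta^{(m)})_g(\eta_j(d)) = \eta_j(\delta^{(j)}_g(d))$, and applying $\Psi$ then $\psi_j$-equivariance closes the loop. \textbf{The main obstacle} I anticipate is Step 1: making precise how the pairwise-commuting-ranges hypothesis lets the supporting $*$-homomorphisms $\pi_j$ be glued into a single $\otimes_{\max}$-homomorphism and simultaneously guarantees the $h_j$ are central for the combined algebra. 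This is where the maximality of the tensor product in the definition of $\mathcal{E}$ is essential — it is exactly what accommodates arbitrary commuting representations without imposing any nuclearity or independence assumption — and it is the step where one must be careful that passing to the multiplier algebra of the generated subalgebra does not destroy the order zero structure.
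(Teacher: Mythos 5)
First, a remark on the comparison itself: the paper does not actually prove this proposition --- it is imported from \cite[Section 3]{RokhlinDimension} (with origins in \cite[Section 5]{HirshbergWinterZacharias}) --- so your proposal has to be judged on its own merits rather than against an in-paper argument. Your Steps 2 and 3 are correct but are the easy part: uniqueness and equivariance both follow from the fact that the ranges of the $\eta_j$ generate $\mathcal{E}(D_1,\hdots,D_m)$ and that $\mathcal{E}(\delta^{(1)},\hdots,\delta^{(m)})_g(\eta_j(d))=\eta_j(\delta^{(j)}_g(d))$. The entire difficulty is existence, i.e.\ your Step 1, and there the argument has a genuine gap: the supporting $*$-homomorphisms $\pi_j$ \emph{cannot} be assembled into a $*$-homomorphism $\pi: D_1\otimes_{\max}\cdots\otimes_{\max}D_m \to \mathcal{M}(B)$. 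The Winter--Zacharias structure theorem produces $\pi_j$ only as a map into $\mathcal{M}(C^*(\psi_j(D_j)))$, and multipliers of a $C^*$-subalgebra do not extend to multipliers of the ambient algebra; here no care can fix this, because the conclusion you want is simply false. Concretely, take $m=2$, $D_1=D_2=M_2$, $B=\mathcal{E}(M_2,M_2)$ (unital, so $\mathcal{M}(B)=B$), trivial actions, and $\psi_j=\eta_j$; these satisfy every hypothesis of the proposition. If a unital $*$-homomorphism $\pi: M_2\otimes_{\max}M_2\cong M_4\to B$ existed, evaluating at the vertex $\vec t=(0,1)\in\Delta^{(2)}$, whose fiber is $\C 1\otimes M_2\cong M_2$, would yield a unital $*$-homomorphism $M_4\to M_2$, which is impossible. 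Even the individual extension step fails: an element $g\in\mathcal{E}(M_2,M_2)=\mathcal{M}(C^*(\eta_1(M_2),\eta_2(M_2)))$ implementing $\pi_1(d)$ would satisfy $g\,\eta_1(1)=\eta_1(d)$, forcing $g(\vec t)=d\otimes 1$ whenever $t_1>0$, hence also at $t_1=0$ by continuity, contradicting $g((0,1))\in\C 1\otimes M_2$ for non-scalar $d$.

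This is not a repairable technicality; it is the very content of the proposition. The part of your Step 1 that is fine --- the elements $h_j=\psi_j(1)$ pairwise commute, commute with all ranges, and sum to $1$, so they generate a central copy of $C(Y)$ for some compact $Y\subseteq\Delta^{(m)}$ --- would, combined with a global $\pi$, produce a unital $*$-homomorphism defined on all of $C(\Delta^{(m)})\otimes\bigl(D_1\otimes_{\max}\cdots\otimes_{\max}D_m\bigr)$, whereas the proposition asserts one only on the proper subalgebra $\mathcal{E}(D_1,\hdots,D_m)$, and the example above shows the stronger statement is false. The collapsing condition $f(\vec t\,)\in D^{(\vec t\,)}$ in the definition of $\mathcal{E}$ encodes precisely the phenomenon your argument skips over: the supporting homomorphism of $\psi_j$ exists only ``where $h_j\neq 0$''. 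A correct existence proof must therefore be local in nature: either pass to the cones ($\psi_j$ corresponds to a $*$-homomorphism $C_0((0,1])\otimes D_j\to B$; commuting ranges give a unital $*$-homomorphism out of the maximal tensor product of the unitized cones; one must then identify $\mathcal{E}$ with the quotient of that universal algebra by the relation $\sum_j\eta_j(1)=1$), or argue fiberwise over $Y$, where for $t_j>0$ the rescaled map $t_j^{-1}\psi_j$ becomes a unital $*$-homomorphism into the fiber, and then glue these fiberwise maps by a continuity argument. That identification/gluing is the actual mathematical work behind the proposition, and it is exactly what is missing: your phrases ``extends multiplicatively'' (Step 1) and ``I expect this to follow by verifying it fiberwise'' (Step 2) are placeholders for it.
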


There is another natural way to view the $C^*$-algebra $\mathcal{E}(D_1, \hdots, D_m)$:
\begin{notation} 
Given two unital $C^*$-algebras $D_1$ and $D_2$, one can define a new $C^*$-algebra by
\[D_1 \star D_2 :=\{f \in C([0,1], D_1 \otimes_{\max} D_2 \mid f(0) \in D_1 \otimes 1, f(1) \in 1 \otimes D_2\}.\] 
We call this the \emph{join} of the two $C^*$-algebras. As shown in \cite[Remark 3.5]{RokhlinDimension}, there is a natural isomorphism
\[\mathcal{E}(D_1,D_2) \cong D_1 \star D_2.\]
More generally, it is shown there that for each $m \geq 2$ there is an isomorphism
\[\mathcal{E}(D_1, \hdots, D_{m}) \cong D_1 \star \mathcal{E}(D_2, \hdots, D_{m})\]
that becomes equivariant in the presence of actions $\delta^{(1)}\colon G \acts D_1, \hdots, \delta^{(m)}\colon G \acts D_m$. Thus, it makes sense to view the $C^*$-algebra $\mathcal{E}(D_1, \hdots, D_{m})$ as the $m$-fold join $D_1 \star \hdots \star D_m$. From now on, we will denote
$D^{\star m} := \mathcal{E}(\underbrace{D, \hdots, D}_\text{$m$ times})$ and $\delta^{\star m} = \mathcal{E}(\underbrace{\delta, \hdots, \delta}_\text{$m$ times})$. 
\end{notation}

In the case where all $D_j$ are the same matrix algebra $M_n$, the universal property leads to the following result:
\begin{lemma}\label{lemma:join_basic}
Let $\nu\colon G \rightarrow \mathcal{U}(M_n)$ be a unitary representation of a countable discrete group. For any unital $C^*$-algebra $B$ equipped with an action $\beta\colon G \acts B$ the following holds: Suppose $\psi_j\colon (M_n, \op{Ad}(\nu)) \rightarrow (B,\beta)$ for $j=1, \hdots, m$ are equivariant c.p.c.\ order zero maps with pairwise commuting ranges such that ${\sum_{j=1}^m \psi_j(1) = 1}$. Then there exists a unital equivariant $*$-homomorphism ${\Psi\colon(M_n,\op{Ad}(\nu)) \rightarrow (B,\beta)}$.
\end{lemma}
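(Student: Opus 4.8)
The plan is to deduce the lemma from the universal property of the join (Proposition~\ref{prop:universal_property}). Applying that proposition with $D_1 = \cdots = D_m = M_n$ and $\delta^{(1)} = \cdots = \delta^{(m)} = \op{Ad}(\nu)$, the given maps $\psi_1, \dots, \psi_m$ (equivariant c.p.c.\ order zero, pairwise commuting ranges, with $\sum_j \psi_j(1) = 1$) assemble into a single unital equivariant $*$-homomorphism $\Psi_0 : (M_n^{\star m}, \op{Ad}(\nu)^{\star m}) \to (B, \beta)$. Thus it suffices to produce a unital equivariant $*$-homomorphism $\iota : (M_n, \op{Ad}(\nu)) \to (M_n^{\star m}, \op{Ad}(\nu)^{\star m})$ and set $\Psi = \Psi_0 \circ \iota$. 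In other words, the entire problem reduces to showing that $M_n$ embeds unitally and $\op{Ad}(\nu)$-equivariantly into its own $m$-fold join.

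To build $\iota$ I would induct on $m$ using the equivariant isomorphism $M_n^{\star m} \cong M_n \star M_n^{\star(m-1)}$ recorded above, the case $m=1$ being trivial. Writing the two-factor join as $\{f \in C([0,1], D_1 \otimes_{\max} D_2) : f(0) \in D_1 \otimes 1,\ f(1) \in 1 \otimes D_2\}$ with $D_1 = M_n$ and $D_2 = M_n^{\star(m-1)}$, and letting $\theta_2 : M_n \to D_2$ be the embedding from the inductive step, I want a continuous path of unital $*$-homomorphisms joining $x \mapsto x \otimes 1$ to $x \mapsto 1 \otimes \theta_2(x)$. Both endpoints factor through the equivariant $*$-homomorphism $\op{id}_{M_n} \otimes \theta_2 : M_n \otimes M_n \to D_1 \otimes D_2$ precomposed with the two corner inclusions $x \mapsto x \otimes 1$ and $x \mapsto 1 \otimes x$ of $M_n$ into $M_n \otimes M_n$ (here I use that $\theta_2$ is unital). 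Hence it is enough to connect these two corner inclusions of $M_n$ into $M_n \otimes M_n$ by a continuous, equivariant path of unital $*$-homomorphisms; applying $\op{id}_{M_n} \otimes \theta_2$ then transports such a path into the join, matches the required fiber conditions at the endpoints, and yields $\iota$.

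The heart of the matter, and the step I expect to be most delicate, is the equivariance of this last path. Let $\Sigma \in \mathcal{U}(M_n \otimes M_n)$ be the tensor flip, so that $\Sigma(x \otimes 1)\Sigma^* = 1 \otimes x$; then $x \mapsto w(t)(x \otimes 1)w(t)^*$ realizes the desired homotopy of unital $*$-homomorphisms for any unitary path $w(t)$ from $1$ to $\Sigma$. A direct computation shows this map is equivariant from $(M_n, \op{Ad}(\nu))$ to $(M_n \otimes M_n, \op{Ad}(\nu \otimes \nu))$ precisely when each $w(t)$ commutes with every $\nu_g \otimes \nu_g$. The key observation is that both endpoints already lie in the commutant $\mathcal{C} := \{\nu_g \otimes \nu_g : g \in G\}'$: this is clear for $1$, and holds for $\Sigma$ because the flip commutes with every symmetric tensor $\nu_g \otimes \nu_g$. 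Since $\mathcal{C}$ is a unital finite-dimensional $C^*$-algebra, its unitary group is connected, so a continuous path $w(t)$ from $1$ to $\Sigma$ can be chosen entirely within $\mathcal{C}$. This supplies the equivariant homotopy, closes the induction, and completes the construction of $\iota$ and hence of $\Psi$.
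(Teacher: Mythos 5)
Your proposal is correct and takes essentially the same approach as the paper: both reduce via the universal property (Proposition \ref{prop:universal_property}) to producing a unital equivariant embedding of $(M_n,\op{Ad}(\nu))$ into its $m$-fold join, and both obtain it from the key observation that the tensor flip commutes with every $\nu_g\otimes\nu_g$ and can be joined to $1$ by a continuous unitary path inside that commutant, giving an equivariant path of unital $*$-homomorphisms from $x\mapsto x\otimes 1$ to $x\mapsto 1\otimes x$. The remaining differences are cosmetic: the paper writes the path explicitly as $\exp(ita)$ via functional calculus and runs the induction by functoriality of the join (composing embeddings $M_n^{\star l}\to M_n^{\star(l+1)}$), whereas you invoke connectedness of the unitary group of the finite-dimensional commutant and transport the corner homotopy through $\op{id}\otimes\theta_2$ --- the same underlying facts.
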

\begin{proof}
Suppose there exist c.p.c.\ order zero maps $\psi_j$ for $j=1, \hdots, m$ that satisfy all the properties in the statement of the lemma. By the universal property (Proposition \ref{prop:universal_property}) we get a unital equivariant $*$-homomorphism $(M_n^{\star m},\op{Ad}(\nu)^{\star m}) \rightarrow (B,\beta)$. Hence, to prove the lemma it suffices to prove that there is a unital equivariant embedding of $(M_n,\op{Ad}(\nu))$ into $(M_n^{\star m}, \op{Ad}(\nu)^{\star m})$. 

We start by proving there is a unital equivariant embedding \[(M_n,\op{Ad}(\nu)) \rightarrow (M_n \star M_n, \op{Ad}(\nu)^{\star 2}).\]
Recall that the flip automorphism on $M_n \otimes M_n$ is implemented by conjugation by the unitary $\sum_{i,j=1}^n e_{ij} \otimes e_{ji}$ (where the $e_{ij}$ denote the standard matrix units). By functional calculus there exists a self-adjoint element $a \in M_n \otimes M_n$ such that 
$\sum_{i,j=1}^n e_{ij} \otimes e_{ji}= \exp(ia).$ Then the map
 \[\iota\colon M_n \rightarrow M_n \star M_n\colon x \mapsto [t \mapsto \exp(ita)(x \otimes 1) \exp(ita)^*]\] 
 is a well-defined unital $*$-homomorphism. It is also equivariant: Take $v \in \mathcal{U}(M_n)$ arbitrarily. Since we performed functional calculus in the commutant of the symmetric elementary tensors $a$ commutes with $v\otimes v$ and hence, $\exp(ita)$ also commutes with $v \otimes v$  for all $t \in [0,1]$. So, we get
 \begin{align*}
 \iota(vxv^*)(t) &= \exp(ita)(vxv^* \otimes 1) \exp(ita)^*\\
 &= \exp(ita) (v \otimes v) (x \otimes 1) (v^* \otimes v^*)\exp(ita)^*\\
 &=  (v \otimes v)\exp(ita) (x \otimes 1) \exp(ita)^*(v^* \otimes v^*)\\
 &= (v \otimes v) \iota(x)(t)(v^* \otimes v^*).
 \end{align*}
 This proves equivariance.
In general, given a unital equivariant inclusion \[\phi\colon \left(M_n^{\star (k-1)},\op{Ad}(\nu)^{\star (k-1)}\right)\rightarrow \left(M_n^{\star k},\op{Ad}(\nu)^{\star k}\right)\]  for $k \geq 2$, functoriality of the join construction yields a unital equivariant embedding of $M_n \star M_n ^{\star (k-1)}$ into $ M_n \star M_n^{\star k}$ that is
induced by the identity map in the first variable and $\phi$ in the second variable.
 By induction, we get a unital equivariant map \[\left(M_n^{\star l},\op{Ad}(\nu)^{\star l}\right) \rightarrow \left(M_n^{\star(l+1)},\op{Ad}(\nu)^{\star (l+1)}\right)\] for each $l\in \N$, so in particular for $l=1, \hdots, m-1$. The composition of these maps is a unital equivariant embedding of $M_n$ into $M_n^{\star m}$. 
\end{proof}

\section{McDuff properties for automorphisms on $W^*$-bundles}\label{section:W*-bundles}
In Section 2 and 3 we reduced the two main problems considered in this paper to problems about the uniform tracial central sequence algebra of the $C^*$-dynamics involved. In this section we tackle this tracial problem. Given an algebraically simple $C^*$-algebra $A$ with $T(A)$ non-empty and compact, the central sequence algebras $A^\omega \cap A'$ and $(\bar{A}^u)^\omega \cap (\bar{A}^u)'$ are isomorphic. This means that we are actually interested in proving properties about the uniform tracial closure of the $C^*$-algebra. For the class of $C^*$-algebras we consider, this tracial closure has a natural $W^*$-bundle structure that we will  make use of. 

\subsection{Preliminaries about $W^*$-bundles}
\begin{definition}[{\cite[Section 5]{Ozawa}}]
Let $X$ be a compact metrizable space. A $W^*$-bundle over $X$ is a triple $(\mathcal{M}, X, E)$ such that
\begin{enumerate}[topsep=2pt]
\item $\mathcal{M}$ is a unital $C^*$-algebra;
\item there is a given unital inclusion of $C(X)$ into the center of $\mathcal{M}$\footnote{The choice of inclusion is part of the definition, but we omit it throughout in order to keep the notation lighter.};
\item $E\colon \mathcal{M} \rightarrow C(X)$ is a faithful conditional expectation satisfying $E(ab) = E(ba)$ for all $a,b \in \mathcal{M}$;
\item The unit ball $\{a \in \mathcal{M}\colon \|a\| \leq 1\}$ is complete with respect to the \emph{uniform $2$-norm} defined by $\|a\|_{2,u}= \|E(a^*a)^{1/2}\|_{C(X)}$.
\end{enumerate}
We say the bundle is \emph{strictly separable} if it contains a countable subset that is dense in the uniform 2-norm. When clear from context, the base space $X$ and conditional expectation $E$ will often be omitted from the notation.  
\end{definition}

For $x \in X$, we can define a tracial state $\tau_x$ on $\mathcal{M}$ by $\tau_x = \op{eval}_x \circ E$. Let $\pi_x\colon \mathcal{M} \rightarrow \mathcal{B}(L^2(\mathcal{M}, \tau_x))$ denote the associated GNS representation. Write $\mathcal{M}_x := \pi_x(\mathcal{M})$. Then the \emph{fiber} of $\mathcal{M}$ over $x$ is the $C^*$-algebra $\mathcal{M}_x$ with trace $\tau_x$. This will in fact be a tracial von Neumann algebra (meaning that $\tau$ is a faithful normal trace).

\begin{example}[Trivial $W^*$-bundles]\label{example:W-bundles}
~\begin{itemize}
\item
Let $X$ be a compact metrizable space and let $(M, \tau)$ be a tracial von Neumann algebra. Consider the unital $C^*$-algebra
\[C_\sigma(X,M) := \{f\colon X \rightarrow M\colon f \text{ is  }\|\cdot\|\text{-bounded and } \|\cdot\|_{2,\tau}\text{-continuous}.\}\]
Together with the obvious embedding $C(X) \rightarrow \mathcal{Z}(C_\sigma(X,M))$ and the conditional expectation $E\colon C_\sigma(X,M) \rightarrow C(X)$ defined by $E(f)(x) = \tau(f(x))$ this is a $W^*$-bundle. It is called the \emph{trivial $W^*$-bundle} over $X$ with fiber $M$. 
\item In particular, tracial von Neumann algebras are trivial $W^*$-bundles over a one-point base space.
\item One of the motivations of Ozawa for defining $W^*$-bundles were special cases of these trivial $W^*$-bundles:
When $A$ is an algebraically simple, separable, nuclear $C^*$-algebra for which $T(A)$ is a non-empty Bauer simplex with finite-dimensional extreme boundary, he showed that $\bar{A}^u$ has a natural structure of a $W^*$-bundle over $\partial_eT(A)$ \cite[Theorem 4]{Ozawa}. The conditional expectation $E\colon \bar{A}^u \rightarrow C(\partial_eT(A))$ satisfies $E(a)(\tau) = \tau(a)$ for all $a \in A$ and $\tau \in \partial_eT(A)$.\footnote{Note that in this way, the uniform 2-norm it gets as a $W^*$-bundle is equal to $\|\cdot\|_{2,\partial_eT(A)}$. The Krein--Milman theorem guarantees that this equals $\|\cdot\|_{2,T(A)}$.} In this way, the fiber over $\tau \in \partial_e T(A)$ can be canonically identified with $\pi_\tau(A)''$. Since $A$ is nuclear we have $\pi_\tau(A)'' \cong \mathcal{R}$ for each $\tau \in \partial_e T(A)$. These isomorphisms can be coordinated in such a way that they give rise to a $*$-homomorphism $\pi\colon A \rightarrow C_\sigma (\partial_eT(A) , \mathcal{R})$ satisfying $\op{eval}_\tau \circ \pi = \pi_\tau$ for each $\tau \in \partial_e T(A)$, where $\pi_\tau$ denotes the GNS representation induced by $\tau$, and such that the image of $\pi$ is dense in the uniform 2-norm. Thus, $\bar{A}^u$ is isomorphic to the trivial bundle $C_\sigma (\partial_eT(A) , \mathcal{R})$. 
\end{itemize}
\end{example}

\begin{definition}[Morphisms of $W^*$-bundles, cf.\ {\cite[Definition 2.3]{EvingtonPennig}}]\label{def:morphism_W*}
Let $\mathcal{M}_i$ be a $W^*$-bundle over $X_i$ with conditional expectation $E_i$ for $i=1,2$. A \emph{morphism} is a unital $*$-homomorphism $\phi\colon \mathcal{M}_1 \rightarrow \mathcal{M}_2$ such that $\phi(C(X_1)) \subseteq C(X_2)$ and $\phi \circ E_1 = E_2 \circ \phi$.
\end{definition}

\begin{remark}\label{remark:action_W*bundles}
The above notion of a morphism between $W^*$-bundles is stronger than that of a $*$-homomorphism between the corresponding $C^*$-algebras. For the rest of this paper, whenever we mention a morphism or automorphism on  a $W^*$-bundle $(\mathcal{M},X,E)$, we will always mean this in the more strict sense of Definition \ref{def:morphism_W*} above. When we talk about an action $\gamma\colon G \acts \mathcal{M}$, we will always mean that $\gamma$ acts by automorphisms in this sense as well. In particular, this means that $\gamma_g(C(X)) = C(X)$ and $\gamma_g \circ E = E \circ \gamma_g$ for each $g \in G$.
\end{remark}

\begin{definition}[Quotients of $W^*$-bundles, cf.\ {\cite[Definition 2.8 and Proposition 2.9]{EvingtonPennig}}]\label{defintion:quotient_bundles}
Let $(\mathcal{M},X,E)$ be a $W^*$-bundle and take a closed subset $Y \subseteq X$. We define:
\[I_Y = \{a \in \mathcal{M} \mid E(a^*a)(y) = 0 \text{ for all } y \in Y\}.\]
This is a $\|\cdot\|_{2,u}$-closed, two-sided ideal of $\mathcal{M}$ and $E(I_Y) \subseteq I_Y$.

The quotient $\mathcal{M}/I_Y$ can be given the structure of a $W^*$-bundle over $Y$. Let $q$ denote the canonical quotient map, let $\iota\colon C(X) \rightarrow \mathcal{M}$ denote the unital embedding into the center and let $r\colon C(X) \rightarrow C(Y)$ denote the restriction map. Then there exist a unital central embedding $ \iota_Y\colon C(Y) \rightarrow \mathcal{M}/I_Y$ and a conditional expectation $E_Y\colon \mathcal{M}/{I_Y} \rightarrow C(Y)$ such that $q \circ \iota = \iota_Y\circ r$ and $r \circ E = E_Y \circ q$. The triple $(\mathcal{M}/I_Y, Y, E_Y)$ is a $W^*$-bundle.   
\end{definition}

Let $\gamma\colon G \acts \mathcal{M}$ be an action on a $W^*$-bundle $(\mathcal{M},X,E)$. Since $\gamma_g(C(X)) = C(X)$ for each $g \in G$, the action can be restricted to $C(X)$ and there is an induced  action $G \acts X$ on the base space. Under the assumption that $Y \subseteq X$ is invariant under this induced action, $\gamma$ also induces an action on the quotient bundle $\mathcal{M}/I_Y$.
  
\begin{definition}[Ultrapowers of $W^*$-bundles, cf.\ {\cite[Definition 3.7]{BBSTWW}}]
Given a $W^*$-bundle $\mathcal{M}$ with uniform 2-norm $\|\cdot\|_{2,u}$, the \emph{ultrapower} of $\mathcal{M}$ is defined as
\[\mathcal{M^\omega} := \ell^\infty(\mathcal{M})/\{(a_n)_{n \in \N } \in \ell^\infty(\mathcal{M})\colon \lim_{n \rightarrow \omega} \|a_n\|_{2,u} = 0 \}.\footnote{This object can again be given the structure of a $W^*$-bundle, but for our purposes it is enough to consider only the $C^*$-algebra.}\]
Note that in the special case where $\mathcal{M}$ is the uniform tracial completion $\bar{A}^u$ of an algebraically simple, separable, nuclear $C^*$-algebra $A$ for which $T(A)$ is a non-empty Bauer simplex with finite-dimensional extremal boundary, then the definition above agrees with the ultrapower $\left(\bar{A}^u\right)^\omega$ defined in Definition \ref{definition:uniform_tracial_closure}.
\end{definition}
\begin{notation} Let $\gamma\colon G \acts \mathcal{M}$ be an action of a countable discrete group $G$ on a $W^*$-bundle $\mathcal{M}$. By Remark \ref{remark:action_W*bundles} this action preserves the $\|\cdot\|_{2,u}$-norm and hence induces an action on $\mathcal{M}^\omega$. We will denote the induced action by $\gamma^\omega$. 
\end{notation}
\begin{notation}
Given a $W^*$-bundle $(\mathcal{M},X,E)$, a non-empty subset $Y \subset X$ and $a \in \mathcal{M}$, we will denote $\sup_{x \in Y} (E(a^*a)(x))^{1/2}$ by $\|a\|_{2,Y}$.
\end{notation}
\subsection{Main technical goal and strategy}
The main goal of this section is to prove the following technical theorem:
\begin{theorem}\label{theorem:main_technical}
Let $(\mathcal{M},X,E)$ be a strictly separable $W^*$-bundle over a finite-dimensional compact metrizable space $X$, and suppose that there exists a unital $*$-homomorphism $\mathcal{R} \rightarrow \mathcal{M}^\omega \cap \mathcal{M}'$. Take $\gamma \in \op{Aut}(\mathcal{M})$ and fix $v \in \mathcal{U}(M_n)$ for some $n \in \N$. Consider the action that $\gamma$ induces on $X$. For each $x \in X$ that has finite period, we define $\mathcal{M}_{\bar{x}} := \mathcal{M}/I_{\Z \cdot x}$ and let $\gamma_{\bar{x}} \in \op{Aut}( \mathcal{M}_{\bar{x}})$ denote the automorphism induced on this quotient $W^*$-bundle
 by $\gamma$.\footnote{By the first isomorphism theorem, this quotient is isomorphic to $\pi_{\bar{x}}(\mathcal{M})$.  Here $\pi_{\bar{x}}$ denotes the GNS representation of $\mathcal{M}$ associated to the trace $\sum_{y \in \Z \cdot x}\frac{1}{|\Z \cdot x|} \tau_y$. It follows from the fact that the unit ball of $\mathcal{M}$ is $\|\cdot\|_{2,u}$-complete that this is a von Neumann algebra.} Assume that for all these $x$ with finite periods there exists a unital equivariant $*$-homomorphism \[(M_n, \op{Ad}(v))\rightarrow (\mathcal{M}_{\bar{x}}^\omega \cap \mathcal{M}_{\bar{x}}',\gamma_{\bar{x}}^\omega).\]
Then there exists a unital equivariant $*$-homomorphism \[(M_n, \op{Ad}(v)) \rightarrow (\mathcal{M}^\omega \cap \mathcal{M}',\gamma^\omega).\] 
\end{theorem}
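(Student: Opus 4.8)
The plan is to produce two equivariant c.p.c.\ order zero maps $\psi_1,\psi_2:(M_n,\op{Ad}(v))\to(\mathcal{M}^\omega\cap\mathcal{M}',\gamma^\omega)$ with pairwise commuting ranges and $\psi_1(1)+\psi_2(1)=1$, and then feed them into the join machinery of Lemma~\ref{lemma:join_basic} (or its approximate reindexing counterpart Lemma~\ref{lemma:join}) to obtain the desired unital equivariant $*$-homomorphism. The two maps will be supported over complementary invariant parts of the base. Let $\sigma:\Z\acts X$ be the action generated by $\gamma$. After fixing a tower-height parameter $N$ large enough for the averaging estimate below, set $X_1:=\{x\in X:|\Z\cdot x|\le N_2\}$ and $X_2:=\{x\in X:|\Z\cdot x|>N_1\}$ for suitable $N_1\le N_2$ comparable to $N$. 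Both sets are $\sigma$-invariant, $X_1$ is closed, $X_2$ is open, and $X=X_1\cup X_2$; crucially the two pieces overlap on the band $\{N_1<|\Z\cdot x|\le N_2\}$, which is what gives the room to glue.

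For $\psi_1$ I would work over the restricted bundle $\mathcal{M}/I_{X_1}$ (Definition~\ref{defintion:quotient_bundles}) with induced automorphism $\gamma_1$, on which the induced $\Z$-action has all orbits of size at most $N_2$. On such a finite-orbit bundle the per-orbit embeddings supplied by the hypothesis (one for each quotient $\mathcal{M}_{\bar{x}}$, $x\in X_1$) can be assembled into a single unital equivariant $*$-homomorphism $\Phi_1:(M_n,\op{Ad}(v))\to((\mathcal{M}/I_{X_1})^\omega\cap(\mathcal{M}/I_{X_1})',(\gamma_1)^\omega)$ by the local-to-global argument of Gardella--Hirshberg \cite{GardellaHirshberg}, which is exactly where finite-dimensionality of $X_1\subseteq X$ enters. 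The quotient map $\mathcal{M}\to\mathcal{M}/I_{X_1}$ induces a surjection with $\sigma$-ideal kernel on the relevant central sequence algebras, as in Example~\ref{example:sigma_ideal}, so Proposition~\ref{prop:sigma_ideal}(2) lets me lift $\Phi_1$ to an equivariant c.p.c.\ order zero map into $\mathcal{M}^\omega\cap\mathcal{M}'$ whose unit is $\equiv 1$ on the fibres over $X_1$.

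The map $\psi_2$ is the technical heart, and its construction over $X_2$ is the step I expect to be hardest. Since $X$ is finite-dimensional and every $x\in X_2$ has period $>N_1$, a topological Rokhlin lemma for $\Z\acts X_2$ \cite{SzaboWuZacharias} produces finitely many approximate towers of large heights $p_l$, i.e.\ positive functions $f^{(l)}_j\in C_0(X_2)\subseteq\mathcal{Z}(\mathcal{M})$ with $\gamma(f^{(l)}_j)\approx f^{(l)}_{j+1}$ and $\sum_{l,j}f^{(l)}_j\approx 1$ over $X_2$. Starting from a unital embedding $\rho:M_n\to\mathcal{M}^\omega\cap\mathcal{M}'$ (coming from $\mathcal{R}\to\mathcal{M}^\omega\cap\mathcal{M}'$ and $M_n\hookrightarrow\mathcal{R}$), I average against the representation by setting
\[
\psi_2(a):=\sum_{l}\,\sum_{j=0}^{p_l-1}\big(f^{(l)}_j\big)^{1/2}\,(\gamma^\omega)^{j}\Big(\rho\big((v^{j})^{*}\,a\,v^{j}\big)\Big)\,\big(f^{(l)}_j\big)^{1/2}.
\]
The matrix twist $a\mapsto (v^{j})^{*}a\,v^{j}$ undoes $\op{Ad}(v)^{j}$ while the tower relation $\gamma(f^{(l)}_j)\approx f^{(l)}_{j+1}$ lets the $\gamma^\omega$-shift propagate, so that $\psi_2\circ\op{Ad}(v)\approx\gamma^\omega\circ\psi_2$; the only genuine error comes from the single wrap-around level at the top of each tower, which contributes $O(1/p_l)$ in the uniform $2$-norm and is made negligible by taking the heights $p_l$ (hence $N$) large. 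The delicacy is to control \emph{simultaneously} the order-zero property of $\psi_2$, its approximate equivariance, and the estimate $\psi_2(1)\approx 1$ over $X_2$, all in the uniform $2$-norm and uniformly over the fibres.

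Finally I would combine the two maps. Because $\op{Ad}(v)$ fixes $1$, the units $\psi_1(1)$ and $\psi_2(1)$ are automatically $\gamma^\omega$-invariant central contractions, so their support functions over $X$ are invariant; this sidesteps the absence of an invariant partition of unity (the orbit space need not be Hausdorff) that obstructs more naive approaches. Taking $h:=\psi_2(1)$, which is $\approx 1$ on $X_2$ and $\approx 0$ on $\{|\Z\cdot x|\le N_1\}$, I replace $\psi_1$ by $(1-h)^{1/2}\psi_1(\cdot)(1-h)^{1/2}$; since $1-h$ is central, invariant, and supported inside $X_1$, this stays an equivariant order zero map and now $\psi_1(1)+\psi_2(1)=1$. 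Pairwise commutation of the ranges is arranged by drawing the resources for $\psi_1$ and $\psi_2$ from two commuting copies of $\mathcal{R}$ inside $\mathcal{M}^\omega\cap\mathcal{M}'$ (using $\mathcal{R}\cong\mathcal{R}\otimes\mathcal{R}$), the central cutoffs commuting with everything. With these properties in hand, a reindexing/$\e$-test argument upgrades the approximate finite-stage data to exact pairwise commuting equivariant order zero maps that are jointly unital, and Lemma~\ref{lemma:join_basic} then yields the unital equivariant $*$-homomorphism $(M_n,\op{Ad}(v))\to(\mathcal{M}^\omega\cap\mathcal{M}',\gamma^\omega)$.
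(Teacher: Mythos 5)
Your overall skeleton matches the paper's proof: split the base space into a closed small-period part and its large-period complement, treat the small-period part with the Gardella--Hirshberg local-to-global theorem plus a $\sigma$-ideal lifting (this is exactly Corollary \ref{corollary:small_periods}), treat the large-period part by averaging a tracially approximate copy of $M_n$ against Rokhlin towers in $C_0(X_2)\subseteq\mathcal{Z}(\mathcal{M})$ with the twist $a\mapsto (v^j)^*av^j$, and finish with the join machinery. However, there is a genuine gap in the large-period step: you sum over \emph{all} tower families $l$ and all levels $j$ to produce a \emph{single} map $\psi_2$, and this map is not c.p.c.\ order zero. Within one tower family the levels $f^{(l)}_j$ are pairwise orthogonal, so cross terms vanish and each single-family average is order zero; but for a base space of covering dimension $d\geq 1$ the topological Rokhlin lemma (\cite[Lemma 4.2]{HirshbergWu}, Lemma \ref{lemma:Rokhlin} in the paper) unavoidably produces $2d+3$ tower families whose supports \emph{overlap}. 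For $a,b\geq 0$ with $ab=0$, the cross terms of $\psi_2(a)\psi_2(b)$ indexed by $l\neq l'$ involve $f^{(l)}_j f^{(l')}_{j'}\neq 0$ and products of \emph{different} $\gamma^\omega$-shifts of $\rho$-images of \emph{differently} twisted elements, which have no reason to vanish. You flag controlling the order zero property as a ``delicacy'', but it is not a delicacy to be estimated away: it fails structurally, and this is precisely why the paper keeps one order zero map \emph{per tower family}, ending with $2d+3$ maps over the large-period part plus one map $\psi_0$ over the small-period part, and why it needs the full $m$-fold join lemma (Lemma \ref{lemma:join}, resting on Proposition \ref{prop:universal_property} and Lemma \ref{lemma:join_basic}) with $m=2d+4$ rather than the two-map version you invoke.

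Two secondary points, fixable but worth noting. First, your mechanism for pairwise commutation (``two commuting copies of $\mathcal{R}$'') cannot handle $\psi_1$, whose image comes from the Gardella--Hirshberg hypothesis on the quotients $\mathcal{M}_{\bar x}$ and does not sit inside any copy of $\mathcal{R}$; the paper instead arranges commutation approximately, by lifting $\psi_0$ to a genuine c.p.c.\ order zero map $M_n\to\mathcal{M}$ and then requiring the Rokhlin-averaged maps (built from \emph{approximately central} lifts $\phi_l:M_n\to\mathcal{M}$ with mutually commuting shifted images) to approximately commute with the finite set $\psi_0(S)$. Second, your exact identity $\psi_1(1)+\psi_2(1)=1$ via the cutoff $(1-h)^{1/2}\psi_1(\cdot)(1-h)^{1/2}$ only holds approximately, since $1-\psi_1(1)$ vanishes only over $X_1$ and $1-h$ is only small (not zero) over $X_2$; the paper handles this by proving the one-sided estimate $\bigl|\bigl|\bigl(1-\sum_l\psi_l(1)\bigr)_+\bigr|\bigr|_{2,u}<\e$ (Lemma \ref{lemma:tracial_plus}) and normalizing by $e^{-1/2}$ inside Lemma \ref{lemma:join}, which is the rigorous version of the upgrade you gesture at.
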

In the special case where the action on the base space $X$ induced by $\gamma$ factors through a finite quotient of $\Z$, Theorem \ref{theorem:main_technical} follows from the work of Gardella--Hirshberg \cite{GardellaHirshberg} (they even obtained their result in the more general setting of actions of countable discrete groups instead of automorphisms).\footnote{The article \cite{GardellaHirshberg} is superseded and expanded on by a new article of Gardella--Hirshberg in collaboration with Vaccaro \cite{GardellaHirshbergVaccaro}. There, the $W^*$-bundle techniques are no longer needed and other techniques are used instead. Since we require some specific $W^*$-bundle results only found in the earlier version, we will make references to that one if needed.} This is discussed in more detail in Subsection \ref{subsection:small_periods}. 

At the opposite end, we have the case where the action induced on the base space $X$ is free. In this case the theorem can be proved by a relatively short argument combining previous results from the literature, as shown below. A sketch of this proof was provided to me by my advisor G\'abor Szab\'o, and I am thankful he let me include it here. It is again possible to do this for a more general class of countable discrete groups.

\begin{theorem}\label{theorem:free_case}
Let $(\mathcal{M},X,E)$ be a $W^*$-bundle over a finite-dimensional compact metrizable space $X$ and assume it admits a unital $*$-homomorphism $\mathcal{R} \rightarrow \mathcal{M}^\omega \cap \mathcal{M}'$. Let $G$ be a finitely generated, virtually nilpotent group. Suppose $\gamma\colon G \acts \mathcal{M}$ is an action such that the induced action on $X$ is free. Then for each $n \in \N$ and each unitary representation $\nu\colon G \rightarrow \mathcal{U}(M_n)$ there exists a unital equivariant $*$-homomorphism $(M_n, \op{Ad}(\nu)) \rightarrow (\mathcal{M}^\omega \cap \mathcal{M}', \gamma^\omega)$.  
\end{theorem}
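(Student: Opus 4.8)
The plan is to reduce, by means of the join machinery of Lemma~\ref{lemma:join_basic}, to the construction of finitely many equivariant c.p.c.\ order zero maps $\psi_0, \dots, \psi_d : (M_n, \op{Ad}(\nu)) \to (\mathcal{M}^\omega \cap \mathcal{M}', \gamma^\omega)$ with pairwise commuting ranges satisfying $\sum_{j=0}^d \psi_j(1) = 1$. The two ingredients that feed into this construction are an equivariant Rokhlin tower structure coming from the base space, and a copy of $M_n$ coming from the McDuff hypothesis. For the first, note that $C(X) \subset \mathcal{Z}(\mathcal{M})$ and $\gamma$ restricts to the induced (dual) action on $C(X)$; passing to sequences, the central embedding yields an equivariant inclusion $C(X)^\omega \hookrightarrow \mathcal{M}^\omega \cap \mathcal{M}'$, whose image automatically commutes with the constant copy of $\mathcal{M}$.

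Next I would supply the Rokhlin towers. Since $G$ is finitely generated and virtually nilpotent, its box spaces have finite asymptotic dimension, so by \cite{SzaboWuZacharias} the free action $G \acts X$ on the finite-dimensional compact metrizable space $X$ has finite Rokhlin dimension with commuting towers, say of value $d$. Transporting the resulting towers along $C(X)^\omega \hookrightarrow \mathcal{M}^\omega \cap \mathcal{M}'$ produces, for every finite-index subgroup $H \leq G$, equivariant c.p.c.\ order zero maps $\phi^{(0)}, \dots, \phi^{(d)} : (C(G/H), G\text{-shift}) \to (\mathcal{M}^\omega \cap \mathcal{M}', \gamma^\omega)$ with pairwise commuting ranges and $\sum_j \phi^{(j)}(1) = 1$. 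In particular this already shows $\dim^c_{\op{Rok}}(\gamma) \leq d$, and since $G$ is residually finite these towers may be taken arbitrarily fine.

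It remains to combine the towers with a matrix copy. The hypothesis $\mathcal{R} \hookrightarrow \mathcal{M}^\omega \cap \mathcal{M}'$ together with $M_n \subset \mathcal{R}$ gives a unital embedding $\theta : M_n \to \mathcal{M}^\omega \cap \mathcal{M}'$, which a standard reindexing (McDuff) argument lets us arrange to commute with the ranges of all the $\phi^{(j)}$. Following the averaging technique of \cite{RokhlinDimension}, one then builds each $\psi_j$ by spreading a twisted, $\gamma^\omega$-translated copy of $\theta$ over the $j$-th tower, schematically
\[
\psi_j(x) = \sum_{gH \in G/H} \phi^{(j)}(1_{gH}) \, \gamma^\omega_g\big(\theta(\op{Ad}(\nu_{g^{-1}})(x))\big),
\]
so that $\psi_j(1) = \phi^{(j)}(1)$ and hence $\sum_j \psi_j(1) = 1$. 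The main obstacle is precisely the bookkeeping of this combination: the twist $\op{Ad}(\nu_g)$ is only well defined on $G/H$ up to $\nu|_H$ and $\theta$ is not itself equivariant, so the naive formula is only \emph{approximately} equivariant and its error is concentrated on the boundary (the ``seam'') of the tower; likewise one must ensure the resulting system has genuinely commuting ranges. Both difficulties are resolved by the mechanism of \cite{RokhlinDimension, SzaboWuZacharias} — freeness of $G \acts X$ supplies towers fine enough to make the seam negligible, and the reindexing against the McDuff copy secures the commutation — so that, after passing to the ultrapower, the $\psi_j$ become bona fide equivariant c.p.c.\ order zero maps. Applying Lemma~\ref{lemma:join_basic} then produces the desired unital equivariant $*$-homomorphism $(M_n, \op{Ad}(\nu)) \to (\mathcal{M}^\omega \cap \mathcal{M}', \gamma^\omega)$.
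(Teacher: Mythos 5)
Your proposal assembles the same two ingredients as the paper --- finite Rokhlin dimension with commuting towers for the induced free action on $C(X)$ via \cite{SzaboWuZacharias}, and copies of $M_n$ in $\mathcal{M}^\omega \cap \mathcal{M}'$ from the $\mathcal{R}$-embedding hypothesis --- but the step where you combine them contains a genuine gap. Your averaging formula
$\psi_j(x) = \sum_{gH} \phi^{(j)}(1_{gH})\, \gamma^\omega_g\bigl(\theta(\op{Ad}(\nu_{g^{-1}})(x))\bigr)$
is only well defined if $\theta$ is equivariant for $\nu|_H$ and $\gamma^\omega|_H$; with the merely non-equivariant $\theta$ that the McDuff hypothesis provides, choosing coset representatives produces an equivariance defect, and your claim that ``fine towers make the seam negligible'' is false in this setting. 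The reason is that the towers live in $C(X) \subset \mathcal{Z}(\mathcal{M})$, and on $C(X)$ the uniform $2$-norm \emph{is} the sup norm (the conditional expectation $E$ restricts to the identity there), so a tower level supported on an arbitrarily small piece of $X$ still has $\|\cdot\|_{2,u}$ equal to $1$. There is no trace-averaging effect as in a single-trace von Neumann algebra, so the seam error is of order one no matter how deep the subgroup $H$ is taken. The mechanisms in the literature that do work are of a different shape: either exactly equivariant coset-indexed towers paired with an $H$-\emph{equivariant} model map (but producing such a map is the same problem over again, for $H$), or tapered, multi-colour tower functions summed over F\o lner-type subsets of the group rather than over cosets, where the Lipschitz taper makes the shift defect small in sup norm and disjointness of supports controls the total error --- this is exactly what the paper's own Lemma \ref{lemma:large_period} does for $\Z$ via Lemma \ref{lemma:Rokhlin}, and it is a genuinely different formula from the one you wrote.

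The paper avoids this bookkeeping entirely, and your appeal to ``the mechanism of \cite{RokhlinDimension, SzaboWuZacharias}'' only becomes legitimate after two adjustments you omit. First, \cite[Corollary 5.1]{RokhlinDimension} (Theorem \ref{theorem:finite_Rokhlin_dimension} here) applies to actions on \emph{separable} $C^*$-algebras, so it cannot be invoked for $\gamma^\omega$ on the nonseparable algebra $\mathcal{M}^\omega \cap \mathcal{M}'$; the paper therefore builds a separable, $\gamma^\omega$-invariant subalgebra $B \subset \mathcal{M}^\omega \cap \mathcal{M}'$ containing $C(X)$ together with nested, mutually commuting, $\gamma^\omega$-translated copies of the universal UHF algebra $\mathcal{Q}$, so that $B$ is $\mathcal{Q}$-stable and $\op{dim}^c_{\op{Rok}}(\gamma^\omega|_B) < \infty$ because the towers from $C(X)$ are central in $B$. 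Second, the model action absorbed cannot be $\op{Ad}(\nu)$ on $M_n$ itself, which is not strongly self-absorbing; one must absorb $\bigotimes_{\N}\op{Ad}(\nu)$ on $\bigotimes_{\N} M_n$ (strongly self-absorbing by \cite[Proposition 6.3]{SsaDynSyst2}), and the hypothesis $B \cong B \otimes \bigotimes_{\N} M_n$ is exactly what the $\mathcal{Q}$-stability of $B$ supplies. With these in place the black box gives $\gamma^\omega|_B \simeq_{\mathrm{cc}} \gamma^\omega|_B \otimes \bigl(\otimes_\N \op{Ad}(\nu)\bigr)$, hence a unital equivariant map $(M_n,\op{Ad}(\nu)) \rightarrow (B_\omega \cap B', (\gamma^\omega|_B)_\omega)$ by \cite[Corollary 3.8]{SsaDynSyst}, and a reindexation trick lands it in $(\mathcal{M}^\omega \cap \mathcal{M}', \gamma^\omega)$; no explicit tower averaging and no join lemma are needed in this argument. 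To repair your proof you should either adopt this separable-subalgebra reduction, or genuinely re-implement the tapered-tower averaging for virtually nilpotent $G$, which is substantially more work than the sketch suggests.
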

\begin{proof}
Fix $(\mathcal{M}, X, E)$ and an action $\gamma\colon G \acts \mathcal{M}$ as in the statement of the proposition. Let $\mathcal{Q}$ denote the universal UHF algebra. First we construct a separable, $\mathcal{Q}$-stable $C^*$-subalgebra $B \subset \mathcal{M}^\omega \cap \mathcal{M}'$ that is invariant under $\gamma^\omega$. By assumption there exists a unital embedding $\mathcal{R} \rightarrow \mathcal{M}^\omega \cap \mathcal{M}'$ and hence we can find a unital embedding ${\phi_1\colon \mathcal{Q} \rightarrow \mathcal{M}^\omega \cap \mathcal{M}'}$. By definition of the $W^*$-bundle we also have a unital inclusion $C(X) \subset \mathcal{Z}(\mathcal{M}^\omega \cap \mathcal{M}')$.
 Let
\[B_1 = C^*(C(X) \cup \{\gamma^\omega_g(\phi_1(\mathcal{Q}))\mid g \in G\}).\]
Choose a unital $*$-homomorphism $\phi_2\colon \mathcal{Q} \rightarrow \mathcal{M}^\omega \cap \mathcal{M}'$ such that its image commutes with $B_1$ and set
\[B_2 = C^*(B_1 \cup\{\gamma^\omega_g(\phi_2(\mathcal{Q}))\mid g \in G\}).\]
Carry on this procedure inductively, i.e.\ given the $C^*$-algebra $B_i$ find a unital inclusion $\phi_{i+1}\colon \mathcal{Q} \rightarrow \mathcal{M}^\omega \cap \mathcal{M}'$ such that its image commutes with $B_i$ and define
\[B_{i+1} =C^*(B_i \cup\{\gamma^\omega_g(\phi_i(\mathcal{Q}))\mid g \in G\}). \]
Then set $B := \overbar{\cup_{i \in \N} B_i}$. This is separable and $\gamma^\omega$-invariant, and by construction we can find a unital inclusion $\mathcal{Q} \rightarrow B_\omega \cap B'$, which implies that $B$ is $\mathcal{Q}$-stable by \cite[7.2.2]{Rordam}. 
Denote the restriction of $\gamma^\omega$ to $B$ by $\beta$. We claim that because the action that $\gamma$ induces on $X$ is free, the restriction of $\gamma^\omega$ to $C(X)$ has finite Rokhlin dimension with commuting towers. When $G$ is finite this follows from \cite[Theorem 4.2]{Gardella} and when $G$ is countably infinite, finitely generated and virtually nilpotent, this follows from \cite[Corollary 8.5, combined with Remark 9.7]{SzaboWuZacharias}. As $C(X)$ is contained in the center of $B$, it is then immediately clear from the definition of Rokhlin dimension that also $\op{dim}_{\op{Rok}}^c(\beta) < \infty$. Now, fix $n \in \N$ and a unitary representation $\nu\colon G \rightarrow \mathcal{U}(M_n)$. By \cite[Proposition 6.3]{SsaDynSyst2}, the action ${\otimes_{\N}\op{Ad}(\nu)\colon G \acts \bigotimes_{\N} M_n}$ is strongly self-absorbing. Theorem \ref{theorem:finite_Rokhlin_dimension} (which followed from a combination of \cite[Theorem 4.10]{SzaboWuZacharias} and \cite[Corollary 5.1]{RokhlinDimension}) then shows that $\beta \otimes \left(\otimes_{\N}\op{Ad}(\nu)\right) \cong_{\mathrm{cc}} \beta$. In particular, we get a unital equivariant inclusion
$(M_n, \op{Ad}(\nu)) \rightarrow (B_\omega \cap B', \beta_\omega)$ by \cite[Corollary 3.8]{SsaDynSyst}. Since $B \subset \mathcal{M}^\omega \cap \mathcal{M}'$, a reindexation trick shows that there is a unital equivariant $*$-homomorphism $(M_n, \op{Ad}(\nu)) \rightarrow (\mathcal{M}^\omega \cap \mathcal{M}', \gamma^\omega)$. This ends the proof.
\end{proof}
By merging the approaches of the proofs for the two subcases of Theorem \ref{theorem:main_technical} in the right way, we can prove the general version of the theorem (restricting to actions of $\Z$). The proof for the free case already illustrates one of the main novel aspects of our approach: via the inclusion $C(X) \subset \mathcal{Z}(\mathcal{M}^\omega \cap \mathcal{M}')$ we will make use of Rokhlin-type properties of the dynamical system $G \acts X$ (or more precisely, a subsystem where the elements have large enough periods such that it bears enough similarity to the free case). This idea of merging the cases of small and large periods draws inspiration from a similar idea used in the proof of the main result of \cite{HirshbergWu}.
 Before the whole strategy can be explained more explicitly, we need another lemma that may be viewed as an approximate version of Lemma \ref{lemma:join_basic} for $W^*$-bundles. We use the following notation:
\begin{notation}
Consider the continuous functions $f^+, f^-\colon \R \rightarrow \R^{\geq 0}$ defined by
\[f^+(t) = \begin{cases}
t & \text{if } t \geq 0,\\
0 & \text{else,}
\end{cases}
\qquad \text{and} \qquad f^-(t) = 
\begin{cases}
-t & \text{if } t \leq 0,\\
0 & \text{else.}
\end{cases}\] 
For a self-adjoint element $a \in A$ we define two positive elements $a_+ := f^+(a)$ and $a_- := f^-(a)$ by functional calculus. Then we get $a = a_+ - a_-$ and $a_+ a_- = 0$.  
\end{notation}

\begin{lemma}\label{lemma:join}
 Let $(\mathcal{M}, X, E)$ be a strictly separable $W^*$-bundle equipped with an action $\gamma\colon G \acts \mathcal{M}$ of a countable discrete group. Let $\nu\colon G \rightarrow \mathcal{U}(M_n)$ be a unitary representation of this group. Fix a finite set $S \ssubset M_n$ generating $M_n$ linearly. Suppose that there exists an $m \in \N$ such that for every $\e >0$, every $F \ssubset G$ and $T \ssubset \mathcal{M}$ there exist $m$ c.p.c.\ order zero maps $\psi_j\colon M_n \rightarrow \mathcal{M}$ for $j=1, \hdots, m$ such that 
\begin{itemize}[itemsep=1ex,topsep=2ex]
\item $\|[\psi_i(s), \psi_j(s')]\|_{2,u} <\e$ for $s,s' \in S$ and $1 \leq i \neq j \leq m$;
\item $\|[\psi_i(s), t]\|_{2,u} <\e$ for  $s \in S$, $t \in T$ and $1 \leq i \leq m$;
\item $\|\psi_i(\op{Ad}(\nu_g)(s))- \gamma_g(\psi_i(s))\|_{2,u} < \e$ for $s \in S$, $g \in F$ and $1 \leq i \leq m$; and
\item $\|\left(1 - \sum_{i=1}^m \psi_i(1)\right)_+\|_{2,u} < \e$.
\end{itemize}
Then there exists a unital equivariant $*$-homomorphism
$\Psi\colon (M_n, \op{Ad}(\nu)) \rightarrow (\mathcal{M}^\omega \cap \mathcal{M}', \gamma^\omega)$.

In case $G$ is finitely generated, it suffices that the above holds for one finite generating set $F \subset G$ to arrive at the same conclusion.
\end{lemma}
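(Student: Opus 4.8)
The plan is to use the hypotheses to produce, for each $\e>0$, $F\ssubset G$ and $T\ssubset\mathcal{M}$, a tuple of order zero maps $\psi_j$ satisfying the four approximate conditions, and then promote these into an \emph{exact} system of maps into the ultrapower $\mathcal{M}^\omega$, at which point Lemma \ref{lemma:join_basic} finishes the job. First I would set up a countable cofinal sequence of test data: since $\mathcal{M}$ is strictly separable, fix a sequence $(T_k)_{k\in\N}$ of finite subsets with $\bigcup_k T_k$ dense in $\|\cdot\|_{2,u}$, a sequence $F_k\nearrow G$ exhausting $G$ (when $G$ is finitely generated I would instead take $F_k=F$ fixed for all $k$), and let $\e_k\to 0$. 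For each $k$, the hypothesis yields $m$ c.p.c.\ order zero maps $\psi_j^{(k)}:M_n\to\mathcal{M}$ satisfying the four bullet conditions with parameters $(\e_k,F_k,T_k)$.

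Next I would assemble these into maps at the ultrapower level. For each $j=1,\dots,m$, the sequence $(\psi_j^{(k)})_{k\in\N}$ defines a c.p.c.\ order zero map $\Psi_j:M_n\to\mathcal{M}^\omega$ by $\Psi_j(x)=[(\psi_j^{(k)}(x))_k]$; this is well-defined since each $\psi_j^{(k)}$ is contractive in operator norm, hence the representing sequences lie in $\ell^\infty(\mathcal{M})$, and c.p.c.\ order zero is preserved under taking such sequences. The four approximate conditions, with $\e_k\to0$ and $F_k,T_k$ cofinal, turn into exact statements about the $\Psi_j$ in $\mathcal{M}^\omega$: the first bullet gives pairwise commuting ranges; the second, that each $\Psi_j$ lands in $\mathcal{M}^\omega\cap\mathcal{M}'$ (using that $\bigcup_k T_k$ is $\|\cdot\|_{2,u}$-dense so commutation extends from $T_k$ to all of $\mathcal{M}$); the third, that each $\Psi_j$ is equivariant with respect to $\op{Ad}(\nu)$ and $\gamma^\omega$ (here the exhaustion $F_k\nearrow G$, or the finite generation argument, covers every group element). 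The fourth bullet is the delicate one: it controls only the positive part $\bigl(1-\sum_j\psi_j(1)\bigr)_+$, giving $\bigl(1-\sum_j\Psi_j(1)\bigr)_+=0$ in $\mathcal{M}^\omega$, i.e.\ $\sum_j\Psi_j(1)\geq 1$. Since each $\Psi_j(1)$ is a contraction, the sum is also $\leq m\cdot 1$ but need not automatically be $\leq 1$; however, $\sum_j\Psi_j(1)\geq 1$ together with each $\Psi_j$ being c.p.c.\ order zero is exactly enough to feed into Lemma \ref{lemma:join_basic} after a normalization.

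The main obstacle I anticipate is precisely this last step: reconciling the one-sided estimate $\sum_j\Psi_j(1)\geq 1$ with the \emph{equality} $\sum_j\psi_j(1)=1$ required by Lemma \ref{lemma:join_basic}. To resolve it I would set $h:=\sum_{j=1}^m\Psi_j(1)$, an invertible positive element with $h\geq 1$ commuting with each $\Psi_j(1)$ (the ranges being pairwise commuting), and rescale by replacing each $\Psi_j$ with $\widetilde{\Psi}_j(x):=h^{-1/2}\Psi_j(x)h^{-1/2}$. One checks that each $\widetilde{\Psi}_j$ remains a c.p.c.\ order zero map with ranges still pairwise commuting, still mapping into $\mathcal{M}^\omega\cap\mathcal{M}'$ and still equivariant, because $h$ is built from the $\Psi_j(1)$ and hence is itself central, fixed by $\gamma^\omega$, and commutes with every $\Psi_i(x)$ in the relevant order zero sense; crucially $\sum_j\widetilde{\Psi}_j(1)=h^{-1/2}\,h\,h^{-1/2}=1$ exactly. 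Verifying that $\widetilde{\Psi}_j$ is genuinely order zero (not merely completely positive contractive) requires using the structure theorem for order zero maps, writing $\Psi_j=\rho_j(\,\cdot\,)\Psi_j(1)$ for a $*$-homomorphism $\rho_j$ commuting with $\Psi_j(1)$, and noting that conjugation by the central $h^{-1/2}$ preserves this form. Once the $\widetilde{\Psi}_j$ are in place, Lemma \ref{lemma:join_basic} applied to $B=\mathcal{M}^\omega\cap\mathcal{M}'$ with $\beta=\gamma^\omega$ produces the desired unital equivariant $*$-homomorphism $\Psi:(M_n,\op{Ad}(\nu))\to(\mathcal{M}^\omega\cap\mathcal{M}',\gamma^\omega)$, completing the proof.
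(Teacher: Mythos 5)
Your proposal is correct and follows essentially the same route as the paper's proof: pass to the ultrapower along a cofinal sequence of test data, observe that the positive-part condition yields $\sum_j\Psi_j(1)\geq 1$ and hence invertibility, rescale by $h^{-1/2}$ (the paper's $e^{-1/2}$) using the order zero structure and $\gamma^\omega$-invariance of $h$ to preserve all properties, and finish with Lemma \ref{lemma:join_basic}. The only cosmetic difference is that you invoke the structure theorem $\Psi_j=\rho_j(\cdot)\Psi_j(1)$ explicitly where the paper cites the cone correspondence, and your word ``central'' for $h$ should be read as ``commutes with $\mathcal{M}$ and with all ranges $\Psi_i(M_n)$,'' which is what you in fact verify.
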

\begin{proof}
Pick an increasing sequence of finite subsets 
$(T_k)_{k \in \N}$ whose union is dense in $\mathcal{M}$ for the $\|\cdot\|_{2,u}$-topology, and an increasing sequence of finite subsets $(F_k)_{k \in \N}$ in $G$ whose union is all of $G$. By  assumption  we can find c.p.c.\ order zero maps $\psi_j^{(k)}\colon M_n \rightarrow \mathcal{M}$ for $1 \leq j \leq m$ and every $k \in \N$ such that
\begin{itemize}[itemsep=1ex,topsep=1ex]
\item $\|[\psi^{(k)}_i(s), \psi^{(k)}_j(s')]\|_{2,u} <1/k$ for $s,s' \in S$ and $1 \leq i \neq j \leq m$;
\item $\|[\psi_i^{(k)}(s), t]\|_{2,u} <1/k$ for  $s \in S$; $t \in T_k$ and $1 \leq i \leq m$;
\item $\|\psi_i^{(k)}(\op{Ad}(\nu_g)(s))- \gamma(\psi_i^{(k)}(s))\|_{2,u} < 1/k$ for $s \in S$, $g \in F_k$ and $1 \leq i \leq m$; and
\item $\|(1 - \sum_{i=1}^m \psi_i^{(k)}(1))_+\|_{2,u} < 1/k$.
\end{itemize}
Define the maps 
\[\Psi_j\colon M_n\rightarrow \mathcal{M}^\omega \cap \mathcal{M}' \colon x \mapsto (\psi_j^{(k)}(x))_k.\]
By our assumptions these are equivariant c.p.c.\ order zero maps with pairwise commuting ranges such that 
\[\left(1-\sum_{j=1}^n \Psi_j(1)\right)_+= 0.\]
Thus, we see that $e := \sum_{j=1}^n \Psi_j(1) \geq 1$, and hence it has an inverse $e^{-1}$ of norm less than or equal to 1.  Note that for each $j=1, \hdots, m$ and $x \in M_n$ we get $[\Psi_j(x),\Psi_j(1)]=0$ by the correspondence between c.p.c.\ order zero maps and $*$-homomorphisms from cones.
Since the $\Psi_j$ are also pairwise commuting, we see that their images all commute with $e$. Moreover, $e$ is fixed by $\gamma$. 
This means that if we define maps 
\[\Psi_j'\colon M_n\rightarrow \mathcal{M}^\omega \cap \mathcal{M}'\colon x \mapsto e^{-1/2}\Psi_j(x)e^{-1/2},\]
 these will be c.p.c.\ order zero maps that are equivariant and  pairwise commuting, and such that $\sum_{j=1}^m \Psi'_j(1) = 1$. By Lemma \ref{lemma:join_basic} these give rise to a unital equivariant $*$-homomorphism $\Psi\colon (M_n,\op{Ad}(\nu)) \rightarrow (\mathcal{M}^\omega \cap \mathcal{M}',\gamma^\omega)$.  
\end{proof}

Lemma \ref{lemma:join} will be crucial to our strategy for proving Theorem \ref{theorem:main_technical}. Given an automorphism $\gamma$ of the $W^*$-bundle $\mathcal{M}$ and a unitary $v \in M_n$ as in the theorem, it suffices to find a fixed $m \in \N$ such that for each $\e >0$ there exist $m$ approximately central, approximately pairwise commuting and approximately equivariant c.p.c.\ order zero maps $\psi_j\colon M_n \rightarrow \mathcal{M}$ for $j=1, \hdots, m$
such that 
\[\left|\left|\left(1 - \sum_{i=1}^m \psi_j(1)\right)_+\right|\right|_{2,u} < \e.\]
This allows us to split the base space into two parts, $X_1$ and $X_2$, and construct the maps $\psi_j$ such that 
$ \|1-\psi_1(1)\|_{2,X_1} < \e$  and $\|1-\sum_{i=2}^m\psi_i(1)\|_{2,X_2} < \e$. In other words, this means that $\psi_1(1)$ and $\sum_{i=2}^m\psi_i(1)$ lie uniformly close to 1 on the fibers above $X_1$ and $X_2$, respectively. In particular, $X_1$ and $X_2$ can be chosen such that the behaviour of the action induced by $\gamma$ on these two sets is similar  enough to the respective two subcases already mentioned before. This allows us to obtain the maps $\psi_j$ by using similar ideas as in the proofs of these subcases.

 More precisely, we can let $X_2$ be the subset of $X$ for which the elements have periods greater than some fixed $N \in \N$ for the action induced by $\gamma$. If this $N$ is large enough, a known Rokhlin-type theorem from topological dynamics allows us to find approximate Rokhlin towers in $C_0(X_2)$. The strategy to obtain the maps $\psi_2, \hdots \psi_m$ then resembles the one used to prove the theorem in case the induced action on the base space is free. Recall from the proof of Theorem \ref{theorem:free_case} that in the free case the induced action on $C(X)$ has finite Rokhlin dimension. This by definition implies the existence of Rokhlin towers in $C(X)$, which then by the arguments in \cite{RokhlinDimension} leads to the required result. The Rokhlin towers we obtain here in $C_0(X_2)$ will play a similar role as those in the proof of \cite{RokhlinDimension}.  A more detailed explanation and proof is given in Subsection \ref{subsection:large_periods}.

The other set, $X_1 \subset X$ will consist of the elements with period at most $N$. Restricted to this set, the action of $\Z$ factors through a finite quotient. We will show that the map $\psi_1$ can be obtained from the results of Gardella--Hirshberg \cite{GardellaHirshberg}. This is what we will start with in the next subsection.  

\subsection{Traces with period $\leq N$}\label{subsection:small_periods}
In \cite{GardellaHirshberg}, Gardella and Hirshberg used $W^*$-bundle techniques to prove that equivariant $\mathcal{Z}$-stability holds automatically for actions of amenable groups on unital classifiable $C^*$-algebras whose tracial state space is a Bauer simplex with finite-dimensional extremal boundary, under the additional assumption that the induced action on the extremal boundary has finite orbits and Hausdorff orbit space. This is in particular the case if the action on the extremal boundary factors through a finite quotient group. We can use their main technical result (Theorem \ref{theorem:fiberwise_mcduff} below) to deal with the part of the $W^*$-bundle over the points of the base space whose periods are bounded above by some $N \in \N$.
Before we can state this, we need the notion of equivariant $W^*$-bundles:
\begin{definition}[{\cite[Definition 2.4]{GardellaHirshberg}}]
Let $G$ be a countable discrete group and let $X$ be a compact metrizable space. An \emph{equivariant $W^*$-bundle over $X$} is a quadruple $(\mathcal{M}, X, E, \gamma)$ such that $(\mathcal{M}, X, E)$ is a $W^*$-bundle and $\gamma\colon G \acts \mathcal{M}$ is an action satisfying $\gamma_g \circ E = E = E \circ \gamma_g$ for all $g \in G$. Note that this implies that the action restricted to $C(X)$ is the trivial one. It is easy to check that in this case $\gamma$ induces an action $\gamma_x\colon G \acts \mathcal{M}_x$ on each fiber that makes $\pi_x$, the GNS representation associated to $\tau_x$, equivariant. For this reason $\gamma$ is also called a \emph{fiberwise} action. 
\end{definition}

The following is a combination of {\cite[Theorem 2.12 and 2.20]{GardellaHirshberg}}:
\begin{theorem}\label{theorem:fiberwise_mcduff}
Let $\mathcal{M}$ be a strictly separable $W^*$-bundle over a finite-dimensional compact metrizable space $X$, let $G$ be a countable discrete group, let $\nu\colon G \rightarrow \mathcal{U}(M_n)$ be a unitary representation and let $\gamma\colon G \acts \mathcal{M}$ be a fiberwise action. Then the following are equivalent:
\begin{enumerate}
\item There exists a unital equivariant $*$-homomorphism $(M_n, \op{Ad}(\nu)) \rightarrow (\mathcal{M}^\omega \cap \mathcal{M}', \gamma^\omega)$;
\item For all $x \in X$ there exists a unital equivariant $*$-homomorphism {$(M_n, \op{Ad}(\nu)) \rightarrow (\mathcal{M}_x^\omega \cap \mathcal{M}_x', \gamma_x^\omega)$.}
\end{enumerate}
\end{theorem}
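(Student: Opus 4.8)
The plan is to treat the two implications separately, the first being essentially formal and the second carrying all the content. For $(1)\Rightarrow(2)$ I would use that evaluation at $x$ gives an equivariant morphism of $W^*$-bundles $\pi_x:\mathcal{M}\to\mathcal{M}_x$, where $\mathcal{M}_x=\mathcal{M}/I_{\{x\}}$ in the sense of Definition \ref{defintion:quotient_bundles}; note that $\{x\}$ is automatically invariant precisely because $\gamma$ is fiberwise, hence trivial on the base. Since $\pi_x\circ\gamma_g=\gamma_{x,g}\circ\pi_x$ and $\pi_x$ is unital and $\|\cdot\|_{2,u}$-contractive, it induces an equivariant unital $*$-homomorphism $\mathcal{M}^\omega\to\mathcal{M}_x^\omega$ carrying $\mathcal{M}^\omega\cap\mathcal{M}'$ into $\mathcal{M}_x^\omega\cap\mathcal{M}_x'$: an element commuting with $\mathcal{M}$ maps to one commuting with the weakly dense subalgebra $\pi_x(\mathcal{M})$, hence with $\mathcal{M}_x=\pi_x(\mathcal{M})''$ by normality of $\tau_x$. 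Composing with the embedding supplied by $(1)$ yields $(2)$.

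For $(2)\Rightarrow(1)$ I would verify the hypotheses of Lemma \ref{lemma:join} with $m=\dim(X)+1$. Fix $\e>0$, $F\ssubset G$ and $T\ssubset\mathcal{M}$. The key local step is: for each $x\in X$ there is a genuine c.p.c.\ order zero map $\theta^{(x)}:M_n\to\mathcal{M}$ and an open neighbourhood $U_x\ni x$ so that, measured in $\|\cdot\|_{2,U_x}$, the map $\theta^{(x)}$ is approximately unital, approximately multiplicative, approximately $\op{Ad}(\nu)$-equivariant over $F$, and $\e$-central relative to $T$. To obtain this I would represent the fiberwise embedding $(M_n,\op{Ad}(\nu))\to(\mathcal{M}_x^\omega\cap\mathcal{M}_x',\gamma_x^\omega)$ from $(2)$ by a sequence of c.p.c.\ order zero maps $M_n\to\mathcal{M}$ (order zero maps out of the finite-dimensional $M_n$ lift along the quotient), fix one index deep enough that all defining relations hold at the single point $x$ up to $\e$, and then invoke continuity: since $E(c^*c)\in C(X)$ for every $c\in\mathcal{M}$ and only finitely many fixed elements are involved, each relevant function $y\mapsto\|\cdot\|_{2,y}$ is continuous and $<\e$ at $x$, hence $<2\e$ on a neighbourhood $U_x$. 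The fiberwise hypothesis is essential twice here: it makes $\gamma_g$ descend compatibly to the fibers so that fiberwise equivariance lifts, and it guarantees the $U_x$ are not displaced by $\gamma$.

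I would then assemble these local maps using the finite covering dimension of $X$. From $\{U_x\}$ extract a finite subcover and refine it to a finite cover admitting a $(d+1)$-colouring with $d=\dim X$, i.e.\ colour classes $C_0,\dots,C_d$ whose members are pairwise disjoint within each class. Choose a subordinate partition of unity $(h_k)_k\subset C(X)_+$, each $h_k$ supported in some $U_{x_k}$; crucially each $h_k$ is $\gamma$-fixed because $\gamma|_{C(X)}$ is trivial. Setting $\psi_j(z)=\sum_{k\in C_j}h_k^{1/2}\,\theta^{(x_k)}(z)\,h_k^{1/2}$, disjointness within a colour class makes the summands orthogonal, so each $\psi_j$ is again c.p.c.\ order zero; centrality of $C(X)$ and $\gamma$-invariance of the $h_k$ then propagate approximate centrality and approximate equivariance from the local pieces, while $\sum_j\psi_j(1)\approx\sum_k h_k=1$ in $\|\cdot\|_{2,u}$ gives joint unitality. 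Lemma \ref{lemma:join} would then produce the desired unital equivariant $*$-homomorphism into $\mathcal{M}^\omega\cap\mathcal{M}'$.

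The main obstacle I anticipate is the cross-colour approximate commutativity $\|[\psi_i(s),\psi_j(s')]\|_{2,u}<\e$ demanded by Lemma \ref{lemma:join}. The overlapping summands are supported only where $h_k h_{k'}\neq0$, that is on $U_{x_k}\cap U_{x_{k'}}$, so it suffices that $\theta^{(x_k)}(s)$ and $\theta^{(x_{k'})}(s')$ approximately commute in $\|\cdot\|_{2,U_{x_k}\cap U_{x_{k'}}}$. The apparent circularity — each local map should be central relative to the ranges of the others — I would break by constructing the finitely many $\theta^{(x_k)}$ recursively: when building $\theta^{(x_k)}$, enlarge $T$ to include the already-chosen ranges $\theta^{(x_{k'})}(S)$ for $k'<k$, which is possible since each $\theta^{(x_k)}$ comes from a sequence that is asymptotically central over $U_{x_k}$, so a sufficiently deep representative is $\e$-central relative to any prescribed finite set. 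Once this local-to-global bookkeeping is in place, finite-dimensionality bounds the number of colours and the fiberwise structure keeps everything equivariant, so all hypotheses of Lemma \ref{lemma:join} are met and the argument closes.
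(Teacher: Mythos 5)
You are reproving a statement the paper itself does not prove: Theorem \ref{theorem:fiberwise_mcduff} is quoted as a combination of Theorems 2.12 and 2.20 of Gardella--Hirshberg \cite{GardellaHirshberg}, so the only comparison available is with that external proof and with correctness as such. Your direction $(1)\Rightarrow(2)$ is correct: $\pi_x$ is a unital, equivariant, $||\cdot||_{2,u}$-contractive surjection onto $\mathcal{M}_x=\pi_x(\mathcal{M})$, hence induces a unital equivariant $*$-homomorphism $\mathcal{M}^\omega\to\mathcal{M}_x^\omega$ carrying $\mathcal{M}^\omega\cap\mathcal{M}'$ into $\mathcal{M}_x^\omega\cap\mathcal{M}_x'$, and composing with the embedding from $(1)$ gives $(2)$. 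The skeleton of your $(2)\Rightarrow(1)$ --- local maps obtained by lifting the fibre embeddings, continuity of $y\mapsto E(c^*c)(y)$ to get good neighbourhoods, a finite subcover with a $(d+1)$-colouring, a $\gamma$-invariant partition of unity in $C(X)$, and Lemma \ref{lemma:join} --- is also the right kind of architecture, and every estimate you list \emph{except} the cross-colour commutators does go through as you describe.

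The gap is precisely at the cross-colour commutators, and it is the central difficulty of the theorem, not a bookkeeping issue. Your recursion rests on the claim that each $\theta^{(x_k)}$ ``comes from a sequence that is asymptotically central over $U_{x_k}$''. That is not what your construction provides: the lifted sequence $(\theta_m)$ is asymptotically central, unital and equivariant only \emph{at the point} $x_k$, i.e.\ in $||\cdot||_{2,\tau_{x_k}}$, while the neighbourhood $U_{x_k}$ is manufactured afterwards by applying continuity to the finitely many defect elements of \emph{one fixed member} of that sequence; a deeper member has its own, possibly much smaller, good neighbourhood. Consequently, when you re-choose $\theta^{(x_k)}$ to commute with the previously fixed ranges, (i) the commutation you gain holds only near $x_k$, on the new shrunken neighbourhood, not on the overlaps $\op{supp}(h_k)\cap\op{supp}(h_{k'})$ where the cross-terms actually live, and (ii) the cover and partition of unity you extracted beforehand are no longer subordinate to the good neighbourhoods of the maps you end up using, so the estimate $||(1-\sum_j\psi_j(1))_+||_{2,u}<\e$ collapses. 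Re-extracting a cover after the recursion changes the base points and restarts the recursion, and a greedy scheme (pick the next base point outside the good neighbourhoods built so far) need not terminate, since compactness gives no bound on the number of steps when the neighbourhoods shrink with the depth of the recursion. Part of this can be repaired with machinery already in the paper: the kernel $J_x$ of $\mathcal{M}^\omega\to\mathcal{M}_x^\omega$ is a $G$-$\sigma$-ideal (argue as in Example \ref{example:sigma_ideal} and Proposition \ref{prop:isomoprhism_ultrapowers}), so Proposition \ref{prop:sigma_ideal} lifts the fibre embedding to an \emph{equivariant} c.p.c.\ order zero map $\Theta_x:M_n\to\mathcal{M}^\omega\cap\mathcal{M}'$ with $1-\Theta_x(1)\in J_x$; then centrality and equivariance become exact and global, and exact commutation with any prescribed separable invariant subset can be arranged by equivariant reindexation inside the fibre followed by relative lifting. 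But even after this reorganisation the one genuinely local condition, unitality, is still attached to a representative and its a-posteriori neighbourhood, so the circularity between fixing the cover and running the commutation recursion persists. Closing exactly this loop is where Gardella--Hirshberg do real work, and your proposal does not contain the idea needed to close it.
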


With a trick inspired by \cite[Example 2.6]{GardellaHirshberg}, we can prove a different version of the previous theorem where the action on the $W^*$-bundle no longer needs to be fiberwise, but instead it is enough that the action induced on the base space factors through a finite quotient group:

\begin{theorem}\label{theorem:GH_rewrite}
Let $(\mathcal{M},X,E)$ be a strictly separable $W^*$-bundle over a finite-dimensional compact metrizable space $X$. Let $G$ be a countable discrete group with a unitary representation $\nu\colon G \rightarrow \mathcal{U}(M_n)$, and take an action $\gamma\colon G \acts \mathcal{M}$ such that the induced action on $X$ factors through a finite quotient of $G$. For $x \in X$ define $\mathcal{M}_{\bar{x}} := \mathcal{M}/I_{G \cdot x}$ and let $\gamma_{\bar{x}}\colon G \acts \mathcal{M}_{\bar{x}}$ denote the action that $\gamma$ induces on this quotient bundle. Assume that for all $x \in X$ there exists a unital equivariant $*$-homomorphism
\[(M_n, \op{Ad}(\nu)) \rightarrow (\mathcal{M}_{\bar{x}}^\omega \cap \mathcal{M}_{\bar{x}}', \gamma_{\bar{x}}^\omega).\]
Then there exists a unital equivariant $*$-homomorphism \[(M_n, \op{Ad}(\nu)) \rightarrow (\mathcal{M}^\omega \cap \mathcal{M}', \gamma^\omega).\]
\end{theorem}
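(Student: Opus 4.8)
The plan is to reduce the statement to the fiberwise situation of Theorem~\ref{theorem:fiberwise_mcduff} by re-interpreting $\mathcal{M}$ as a $W^*$-bundle over the orbit space of the base action, on which $\gamma$ then acts fiberwise. Let $Q$ be a finite group through which the action $G \acts X$ factors, so that $G$ acts on $X$ via a quotient map $\pi: G \twoheadrightarrow Q$ and every $G$-orbit in $X$ equals a $Q$-orbit. Set $Y := X/Q$, a compact metrizable space, and identify $C(Y)$ with the fixed-point algebra $C(X)^Q \subseteq C(X) \subseteq \mathcal{Z}(\mathcal{M})$. Let $P: C(X) \to C(X)^Q$ be the canonical averaging conditional expectation $P(f) = \frac{1}{|Q|}\sum_{q \in Q} q\cdot f$, and define a new conditional expectation $E' := P \circ E : \mathcal{M} \to C(Y)$.

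First I would check that $(\mathcal{M}, Y, E')$ is again a $W^*$-bundle. Faithfulness and the trace property of $E'$ follow immediately from those of $E$ together with faithfulness and linearity of $P$. For the completeness axiom, the key observation is that the associated uniform $2$-norm $||a||'_{2,u} = ||E'(a^*a)^{1/2}||_{C(Y)}$ is equivalent to the original one; concretely one has $\frac{1}{|Q|}||a||_{2,u}^2 \leq (||a||'_{2,u})^2 \leq ||a||_{2,u}^2$, so completeness of the unit ball transfers from the original bundle and strict separability is preserved. That $\gamma$ is a \emph{fiberwise} action for the new bundle, i.e.\ $\gamma_g \circ E' = E' = E' \circ \gamma_g$, follows because $\gamma$ commutes with $E$ and $P$ is invariant under the $G$-action (which factors through $Q$), so $E'(\gamma_g(a)) = P(\gamma_g(E(a))) = P(E(a)) = E'(a)$, while $E'(a) \in C(X)^Q$ is $\gamma$-fixed. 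The one genuinely topological point here is that $Y = X/Q$ is again finite-dimensional; this is where I would invoke Hurewicz's theorem on closed, finite-to-one maps applied to the quotient map $X \to X/Q$, whose fibers are $0$-dimensional, giving $\dim Y \leq \dim X < \infty$.

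Next I would identify the fibers of the new bundle. For $\bar x \in Y$ the trace $\tau_{\bar x} = \op{eval}_{\bar x} \circ E'$ is precisely the uniform average $\frac{1}{|G\cdot x|}\sum_{y \in G\cdot x}\tau_y$ over the orbit, whose GNS left-kernel equals $I_{G\cdot x}$. Hence the fiber of $(\mathcal{M}, Y, E')$ over $\bar x$ is canonically $\mathcal{M}_{\bar x} = \mathcal{M}/I_{G\cdot x}$ with its induced fiberwise action $\gamma_{\bar x}$, exactly the objects appearing in the hypothesis. Thus the standing assumption supplies, for every $\bar x \in Y$, a unital equivariant $*$-homomorphism $(M_n, \op{Ad}(\nu)) \to (\mathcal{M}_{\bar x}^\omega \cap \mathcal{M}_{\bar x}', \gamma_{\bar x}^\omega)$, which is condition (2) of Theorem~\ref{theorem:fiberwise_mcduff} for the bundle $(\mathcal{M}, Y, E', \gamma)$.

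Finally I would apply Theorem~\ref{theorem:fiberwise_mcduff} to obtain a unital equivariant $*$-homomorphism $(M_n, \op{Ad}(\nu)) \to (\mathcal{M}^\omega \cap \mathcal{M}', \gamma^\omega)$, where the ultrapower is formed with respect to $||\cdot||'_{2,u}$. Since $\mathcal{M}^\omega$, the relative commutant $\mathcal{M}'$ and the induced action $\gamma^\omega$ depend only on the ideal of $\omega$-null bounded sequences, and since the equivalence of $||\cdot||_{2,u}$ and $||\cdot||'_{2,u}$ forces these two null ideals to coincide, the resulting $C^*$-algebra and action are literally the same as those built from the original bundle. This yields exactly the desired map. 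I expect the main obstacle to be the bookkeeping in verifying that $(\mathcal{M}, Y, E')$ is a bona fide $W^*$-bundle with the correct fibers---in particular the finite-dimensionality of $X/Q$ and the precise norm comparison---rather than any deep new idea, since the heavy lifting is done by Theorem~\ref{theorem:fiberwise_mcduff}.
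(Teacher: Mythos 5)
Your proposal is correct and follows essentially the same route as the paper's own proof: re-reading $\mathcal{M}$ as an equivariant (fiberwise) $W^*$-bundle over the orbit space via the averaged conditional expectation $E' = P \circ E$, establishing the equivalence of the two uniform $2$-norms so that completeness, strict separability and the ultrapower transfer, identifying the fibers of the new bundle with $\mathcal{M}_{\bar{x}} = \mathcal{M}/I_{G\cdot x}$, and then applying Theorem \ref{theorem:fiberwise_mcduff}. One minor correction on the only point where you are more careful than the paper (which leaves the finite-dimensionality of the orbit space implicit): the Hurewicz theorem for closed maps with zero-dimensional fibers bounds $\dim X$ by $\dim(X/Q)$, i.e.\ the wrong direction; what you want is either the Hurewicz dimension-raising theorem, giving $\dim(X/Q) \leq \dim X + |Q| - 1 < \infty$ since orbits have at most $|Q|$ points, or the classical fact that quotients by finite group actions on metrizable compacta do not raise dimension.
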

\begin{proof}
Take a $W^*$-bundle $(\mathcal{M},X,E)$ as in the statement of the theorem. In order to be able to apply Theorem \ref{theorem:fiberwise_mcduff} we first turn this into an equivariant $W^*$-bundle with the same trick as used in \cite[Example 2.6]{GardellaHirshberg}. Since the action that $\gamma$ induces on $X$ factors through a finite quotient of $G$, the orbit space $X/G$ is again compact and metrizable, and there is a canonical faithful condition expectation $\mathcal{E}\colon C(X) \rightarrow C(X/G)$ given by
\[\mathcal{E}(f)(G \cdot x) = \frac{1}{|G \cdot x|} \sum_{y \in G \cdot x} f(y).\] 
We show that $\mathcal{M}$ also has a natural structure of an equivariant $W^*$-bundle over $X/G$ with conditional expectation $E' =  \mathcal{E} \circ E$. Since $\gamma_g \circ E' = E' = E' \circ \gamma_g$ for each $g \in G$, it will moreover be an equivariant bundle. Let $N$ denote an upper bound for the orbit size. For each $a \in \mathcal{M}$ it holds that
\[\sup_{x \in X} E(a^*a)(x) \leq N \sup_{G \cdot x \in X/G} E'(a^*a)(G \cdot x) \leq N\sup_{x \in X} E(a^*a)(x).\]
This means that the two possible uniform $2$-norms on $\mathcal{M}$ defined respectively by $E$ and $E'$ are equivalent. In particular, the unit ball of $\mathcal{M}$ is also complete for the $2$-norm induced by $E'$, and it does not matter which $2$-norm we use to form the ultrapower $\mathcal{M}^\omega$, as the two options will yield the same $C^*$-algebra. 

By the first isomorphism theorem, for each $x \in X$ the fiber of $\mathcal{M}$ over $G \cdot x$ is isomorphic to $\mathcal{M}_{\bar{x}} = \mathcal{M}/I_{G \cdot x}$ as defined in the statement of the theorem. Since by assumption there exists a unital equivariant $*$-homomorphism 
\[(M_n, \op{Ad}(\nu)) \rightarrow (\mathcal{M}_{\bar{x}}^\omega \cap \mathcal{M}_{\bar{x}}', \gamma_{\bar{x}}^\omega)\]
for each $x \in X$, the result follows from Theorem \ref{theorem:fiberwise_mcduff}.  
\end{proof}

Consider the case of a $W^*$-bundle $(\mathcal{M},X,E)$ as in the statement of Theorem \ref{theorem:main_technical} with an automorphism $\gamma \in \op{Aut}(\mathcal{M})$ and $v \in \mathcal{U}(M_n)$. The action induced by $\gamma$ on the base space $X$ does not need to factor through a finite quotient of $\Z$ and therefore we cannot immediately apply the previous theorem. However, given some fixed $N \in \N$ we can consider the subset $X_N \subset X$ of points in the base space that have period at most $N$ for this induced action. The quotient bundle $\mathcal{N} :=  \mathcal{M}/I_{X_N}$ with the induced automorphism $\bar{\gamma} \in \op{Aut}(\mathcal{N})$ will satisfy the requirements of Theorem \ref{theorem:GH_rewrite} and hence, we get a unital equivariant map 
 \[(M_n, \op{Ad}(v)) \rightarrow (\mathcal{N}^\omega \cap \mathcal{N}', \bar{\gamma}^\omega).\]
 The next proposition will allow us to lift this map to a c.p.c.\ map into $\mathcal{M}^\omega \cap \mathcal{M}'$. 
\begin{prop}\label{prop:isomoprhism_ultrapowers}
Let $(\mathcal{M},X,E)$ be a strictly separable $W^*$-bundle over a finite-dimensional compact metrizable space $X$ and let $\gamma \in \op{Aut}(\mathcal{M})$. Fix $N \in \N$ and denote by $X_N \subset X$ the points of the base space with period at most $N$ for the action induced by $\gamma$. Define the ideal $J_{X_N} \subset \mathcal{M}^\omega$ as (abusing notation to use representative sequences in $\ell^\infty(\mathcal{M})$ to denote elements in $\mathcal{M}^\omega$)
\[J_{X_N} := \left\{(a_n)_{n \in \N} \in \mathcal{M}^\omega \colon \lim_{n \rightarrow \omega} \|a_n\|_{2,X_N} =0\right\}.\] 
This is a $\Z$-$\sigma$-ideal. Consider the quotient $W^*$-bundle $\mathcal{N} := \mathcal{M}/I_{X_N}$.
 Then as $C^*$-algebras, we get that
\[\mathcal{N}^\omega \cong \mathcal{M}^\omega/J_{X_N}.\]
\end{prop}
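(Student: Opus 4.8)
The plan is to establish the isomorphism $\mathcal{N}^\omega \cong \mathcal{M}^\omega/J_{X_N}$ by constructing a natural surjective $*$-homomorphism and identifying its kernel, after first verifying the $\Z$-$\sigma$-ideal claim. The whole statement splits into two independent parts: that $J_{X_N}$ is a $\Z$-$\sigma$-ideal, and the isomorphism itself. I would treat them separately.

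For the $\Z$-$\sigma$-ideal claim, I would mimic the argument in Example \ref{example:sigma_ideal}. Note first that $\gamma^\omega$ leaves $J_{X_N}$ invariant: since $X_N$ is invariant under the action induced on the base space and $\gamma$ is a $W^*$-bundle automorphism (so $\gamma \circ E = E \circ \gamma$ and $\gamma(C(X)) = C(X)$), the seminorm $\|\cdot\|_{2,X_N}$ is preserved by $\gamma$ up to the base-space homeomorphism permuting $X_N$, hence $J_{X_N}$ is $\gamma^\omega$-invariant. Then, given a separable $\gamma^\omega$-invariant subalgebra $C \subset \mathcal{M}^\omega$, I would produce a positive contraction $e \in (J_{X_N} \cap C')^{\gamma^\omega}$ acting as a unit on $J_{X_N} \cap C$ by combining a quasi-central approximate unit for the ideal $J_{X_N} \cap C$ (via \cite[Lemma 1.4]{Kasparov}, using amenability of $\Z$ to make it approximately invariant) with Kirchberg's $\e$-test \cite[Lemma A.1]{Kirchberg} applied to a countable dense subset of $C$. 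This is essentially a transcription of the argument already given for $J_A$.

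For the isomorphism, I would build the map explicitly on representative sequences. The quotient map $q: \mathcal{M} \to \mathcal{N} = \mathcal{M}/I_{X_N}$ induces a $*$-homomorphism $q_\omega: \mathcal{M}^\omega \to \mathcal{N}^\omega$ by applying $q$ entrywise; this is well-defined because the uniform $2$-norm on $\mathcal{N}$ (coming from $E_{X_N}$) of $q(a)$ is exactly $\|a\|_{2,X_N}$, so sequences that are $\|\cdot\|_{2,X_N}$-null in $\mathcal{M}^\omega$ map to $\|\cdot\|_{2,u}$-null sequences in $\mathcal{N}^\omega$. The kernel of $q_\omega$ is then visibly $J_{X_N}$, giving an injective $*$-homomorphism $\mathcal{M}^\omega/J_{X_N} \hookrightarrow \mathcal{N}^\omega$. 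Surjectivity is the only real content: given a representative sequence $(b_n)$ in $\ell^\infty(\mathcal{N})$, I would lift each $b_n$ to some $a_n \in \mathcal{M}$ with $\|a_n\| \leq \|b_n\| + 1/n$ (using that $q$ is a quotient map, so norm-preserving on the quotient up to arbitrary tolerance), and check that $(a_n)$ defines an element of $\mathcal{M}^\omega$ mapping to $(b_n)$.

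**The main obstacle** I expect is surjectivity, specifically ensuring the lifts $a_n$ assemble into a genuine element of $\mathcal{M}^\omega$ — i.e.\ that $(a_n)$ is norm-bounded and well-defined modulo the $\|\cdot\|_{2,X_N}$-null sequences — rather than merely a bounded sequence. The subtlety is that $q$ is only isometric in the operator norm on the $C^*$-quotient, whereas the ultrapower $\mathcal{N}^\omega$ is formed with respect to the uniform $2$-norm $\|\cdot\|_{2,u}$ on $\mathcal{N}$, which equals $\|\cdot\|_{2,X_N}$ on liftings. Norm-boundedness of the lifts is automatic, but I must confirm that the entrywise lift respects the identification: two representatives of the same element of $\mathcal{N}^\omega$ should produce lifts differing by a $\|\cdot\|_{2,X_N}$-null sequence, which holds precisely because $\|q(a_n - a_n')\|_{2,u} = \|a_n - a_n'\|_{2,X_N}$. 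I would also need completeness of the unit ball of $\mathcal{N}$ in $\|\cdot\|_{2,u}$ (part of the $W^*$-bundle axioms for the quotient, established in Definition \ref{defintion:quotient_bundles}) to guarantee that the lifting procedure lands in the correct ultrapower and that no information is lost in passing between the two $2$-norms.
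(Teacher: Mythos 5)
Your proposal is correct and follows essentially the same route as the paper: both arguments rest on the identity $\|q(a)\|_{2,u} = \|a\|_{2,X_N}$ coming from $E_{X_N} \circ q = r \circ E$, identify $J_{X_N}$ as the kernel (equivalently, the preimage of the null-sequence ideal), obtain surjectivity by lifting bounded sequences through the quotient map, and handle the $\Z$-$\sigma$-ideal claim by the same quasi-central approximate unit plus $\e$-test argument as in Example \ref{example:sigma_ideal}. The only superfluous point is your appeal to $\|\cdot\|_{2,u}$-completeness of the unit ball of $\mathcal{N}$, which is not actually needed since the ultrapower is just a quotient of $\ell^\infty(\mathcal{N})$.
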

\begin{proof}
Recall Example \ref{example:sigma_ideal}, where we sketched a proof of the fact that in presence of an action by a group $G$, the trace kernel ideal of a $C^*$-algebra is a $G$-$\sigma$-ideal of its norm ultrapower. The proof that $J_{X_N}$ is a $\Z$-$\sigma$-ideal for the action induced by $\gamma$ is completely analogous. 

Let $\pi\colon \mathcal{M} \rightarrow \mathcal{N}$ denote the projection map to the quotient bundle. Recall that the conditional expectation $E_\mathcal{N}$ on $\mathcal{N}$ satisfies $E_\mathcal{N} \circ \pi = r \circ E$, where $r\colon C(X) \rightarrow C(X_N)$ denotes the restriction map. Hence, for all $a \in \mathcal{M}$ it holds that
\[\|\pi(a)\|_{2,u}=  \|a\|_{2,X_N}.\]

The quotient map $\pi$ induces a surjective map $\ell^\infty(\mathcal{M}) \rightarrow \ell^\infty(\mathcal{N})$ and the above equality shows that under this map the ideal
$J_{X_N}$ is the preimage of the ideal
\[\{(a_n)_{n\in \N} \in \ell^\infty(\mathcal{N}) \colon \lim_{n \rightarrow \omega}\|a_n\|_{2,u} =0\} \subset \ell^\infty(\mathcal{N}).\]
Hence, 
 \[\mathcal{N}^\omega \cong \mathcal{M}^\omega/J_{X_N}.\]
\end{proof}

\begin{corollary}\label{corollary:small_periods}
Let $\mathcal{M}$ be a strictly separable $W^*$-bundle over a finite-dimensional compact metrizable space $X$ and fix $\gamma \in \op{Aut}(\mathcal{M})$. Take $v \in \mathcal{U}(M_n)$ for some $n \in \N$. For any $x \in X$ that has finite period for the action that $\gamma$ induces on $X$, we define $\mathcal{M}_{\bar{x}} := \mathcal{M}/I_{\Z \cdot x}$ and write $\gamma_{\bar{x}} \in \op{Aut}(\mathcal{M}_{\bar{x}})$ for the automorphism that $\gamma$ induces on this quotient $W^*$-bundle. Assume that for all those $x$ there exists a unital equivariant $*$-homomorphism 
\begin{equation}\label{eq:orbitwise_absorption}
(M_n, \op{Ad}(v))\rightarrow (\mathcal{M}_{\bar{x}}^\omega \cap \mathcal{M}_{\bar{x}}',\gamma_{\bar{x}}^\omega).
\end{equation}
Then, using the same notation as in Proposition \ref{prop:isomoprhism_ultrapowers}, there exists an equivariant c.p.c.\ order zero map \[\Phi\colon (M_n, \op{Ad}(v)) \rightarrow (\mathcal{M}^\omega \cap \mathcal{M}',\gamma^\omega)\] such that $1-\Phi(1) \in J_{X_N}$.
\end{corollary}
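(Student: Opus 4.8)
The plan is to fix $N \in \N$, pass to the quotient bundle over the period-$\leq N$ part of the base space, apply the Gardella--Hirshberg-type result of Theorem \ref{theorem:GH_rewrite} there, and then lift the resulting embedding back to $\mathcal{M}^\omega \cap \mathcal{M}'$ by means of the $\sigma$-ideal identification in Proposition \ref{prop:isomoprhism_ultrapowers}. First I would record the elementary facts about $X_N$. Writing $T \in \op{Homeo}(X)$ for the homeomorphism induced by $\gamma$, we have $X_N = \bigcup_{p=1}^N \{x : T^p x = x\}$, a finite union of closed sets; hence $X_N$ is closed and $T$-invariant, and in particular compact, metrizable and finite-dimensional. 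Setting $m := \op{lcm}(1,\dots,N)$, every point of $X_N$ is fixed by $T^m$, so the action of $\Z$ on $X_N$ factors through the finite quotient $\Z/m\Z$. The quotient bundle $\mathcal{N} := \mathcal{M}/I_{X_N}$ over $X_N$ thus inherits an automorphism $\bar\gamma$ whose induced action on the base factors through a finite quotient of $\Z$, and $\mathcal{N}$ is again strictly separable, since the image under the $2$-norm-contractive quotient map of a countable $2$-norm-dense subset of $\mathcal{M}$ is $2$-norm-dense in $\mathcal{N}$.

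Next, for each $x \in X_N$ the orbit $\Z \cdot x$ lies inside $X_N$, so restricting $\mathcal{M}$ first to $X_N$ and then to $\Z \cdot x$ agrees with restricting directly; this gives an equivariant identification $\mathcal{N}_{\bar{x}} = \mathcal{N}/I_{\Z \cdot x} \cong \mathcal{M}/I_{\Z\cdot x} = \mathcal{M}_{\bar{x}}$ intertwining $\bar\gamma_{\bar{x}}$ with $\gamma_{\bar{x}}$. Consequently the hypothesis \eqref{eq:orbitwise_absorption} supplies, for every $x \in X_N$, a unital equivariant $*$-homomorphism $(M_n, \op{Ad}(v)) \to (\mathcal{N}_{\bar{x}}^\omega \cap \mathcal{N}_{\bar{x}}', \bar\gamma_{\bar{x}}^\omega)$. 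Applying Theorem \ref{theorem:GH_rewrite} to the strictly separable bundle $(\mathcal{N}, X_N, E_{\mathcal{N}}, \bar\gamma)$ with the $\Z$-action then produces a unital equivariant $*$-homomorphism $\rho: (M_n, \op{Ad}(v)) \to (\mathcal{N}^\omega \cap \mathcal{N}', \bar\gamma^\omega)$.

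It remains to lift $\rho$. By Proposition \ref{prop:isomoprhism_ultrapowers}, $J_{X_N}$ is a $\Z$-$\sigma$-ideal of $\mathcal{M}^\omega$ and the induced map $q: \mathcal{M}^\omega \to \mathcal{M}^\omega/J_{X_N} \cong \mathcal{N}^\omega$ satisfies $q(\mathcal{M}) = \mathcal{N}$. The one genuine subtlety --- and the step I expect to be the main obstacle --- is that $\mathcal{M}$ need not be norm-separable, so Proposition \ref{prop:sigma_ideal} cannot be invoked directly with $D = \mathcal{M}$. To circumvent this I would fix a countable $2$-norm-dense subset of $\mathcal{M}$ and let $D_0 \subseteq \mathcal{M}$ be the norm-separable, $\gamma$-invariant $C^*$-subalgebra generated by it together with its $\gamma$-translates; then $D_0$ is $2$-norm-dense in $\mathcal{M}$ and $q(D_0)$ is $2$-norm-dense in $\mathcal{N}$. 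Since $\rho$ takes values in $\mathcal{N}^\omega \cap \mathcal{N}' \subseteq \mathcal{N}^\omega \cap q(D_0)'$, Proposition \ref{prop:sigma_ideal}(1) shows that $q: \mathcal{M}^\omega \cap D_0' \to \mathcal{N}^\omega \cap q(D_0)'$ is surjective with $\Z$-$\sigma$-ideal kernel, and part (2) yields an equivariant c.p.c.\ order zero lift $\Phi: (M_n, \op{Ad}(v)) \to (\mathcal{M}^\omega \cap D_0', \gamma^\omega)$ with $q \circ \Phi = \rho$.

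Finally I would upgrade $\Phi(M_n) \subseteq \mathcal{M}^\omega \cap D_0'$ to $\Phi(M_n) \subseteq \mathcal{M}^\omega \cap \mathcal{M}'$. This relies on the bundle estimates $\|am\|_{2,u} \leq \|a\|\,\|m\|_{2,u}$ and $\|ma\|_{2,u} \leq \|a\|\,\|m\|_{2,u}$, the latter obtained from the trace identity $E(c^*c) = E(cc^*)$, which together show that an element of $\mathcal{M}^\omega$ commuting with the $2$-norm-dense subalgebra $D_0$ automatically commutes with all of $\mathcal{M}$. Hence $\Phi$ lands in $\mathcal{M}^\omega \cap \mathcal{M}'$. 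Since $\rho$ is unital, $q(1 - \Phi(1)) = 1 - \rho(1) = 0$, so $1 - \Phi(1) \in \ker q = J_{X_N}$, which is exactly the asserted property and completes the construction.
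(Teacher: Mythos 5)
Your proof is correct and follows essentially the same route as the paper's: pass to the quotient bundle $\mathcal{N} = \mathcal{M}/I_{X_N}$, apply Theorem \ref{theorem:GH_rewrite} to it, and lift the resulting embedding back using Propositions \ref{prop:sigma_ideal} and \ref{prop:isomoprhism_ultrapowers}. The only difference is that you spell out what the paper compresses into ``since $\mathcal{M}$ is strictly separable'' --- namely the passage to a norm-separable, $2$-norm-dense, invariant subalgebra $D_0$ and the density argument identifying $\mathcal{M}^\omega \cap D_0'$ with $\mathcal{M}^\omega \cap \mathcal{M}'$ --- which is exactly the intended use of that hypothesis.
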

\begin{proof}
Denote the quotient $W^*$-bundle $\mathcal{M}/I_{X_N}$ by $\mathcal{N}$ and the induced automorphism on this bundle by $\bar{\gamma}$. Because of assumption \eqref{eq:orbitwise_absorption} on $\mathcal{M}$, the quotient bundle $\mathcal{N}$ satisfies all the necessary properties for the conclusion of Theorem \ref{theorem:GH_rewrite} to hold, so there must exist a unital equivariant map $(M_n, \op{Ad}(v)) \rightarrow (\mathcal{N}^\omega \cap \mathcal{N}', \bar{\gamma}^\omega)$. Since $\mathcal{M}$ is strictly separable, a combination of Proposition \ref{prop:sigma_ideal} and \ref{prop:isomoprhism_ultrapowers} allows us to lift this map to an equivariant c.p.c.\ order zero map 
\[\Phi\colon (M_n, \op{Ad}(v)) \rightarrow (\mathcal{M}^\omega \cap \mathcal{M}',\gamma^\omega)\] such that $1-\Phi(1) \in J_{X_N}$.
\end{proof}

\subsection{Traces with period $> N$}\label{subsection:large_periods}
Next, we show that we can also take care of traces with sufficiently large periods: 

\begin{lemma}\label{lemma:large_period}
Let $(\mathcal{M},X,E)$ be a $W^*$-bundle over a compact metrizable space $X$ that has covering dimension $d < \infty$. Assume that there exists a unital $*$-homomorphism $\mathcal{R} \rightarrow \mathcal{M}^\omega \cap \mathcal{M}'$. Fix $\gamma \in \op{Aut}(\mathcal{M})$ and $v \in \mathcal{U}(M_n)$ for some $n \in \N$. For each $\e>0$ and $S \ssubset M_n$ there exists an $N \in \N$ such that the following holds: Let $X_N \subset X$ denote those points with period at most $N$ for the action that $\gamma$ induces on $X$. Let $K \subset X \setminus X_N$ be compact. For each $T \ssubset \mathcal{M}$ we can find $2d+3$ c.p.c.\ order zero maps $\psi_l\colon M_n \rightarrow \mathcal{M}$ for $l=0, \hdots 2d+2$ such that for all $t \in T$ and $s,s' \in S$
\begin{itemize}[itemsep=1ex,topsep=1ex]
\item
$\|\psi_l(vsv^*) - \gamma(\psi_l(s))\|_{2,u} < \e \quad \text{for } l=0, \hdots 2d+2;$
\item
$\| [\psi_l(s),\psi_{l'}(s')]\|_{2,u} < \e \quad \text{for } 0 \leq l \neq l' \leq 2d+2; $
\item $\|[\psi_{l}(s), t]\|_{2,u}  < \e \quad \text{for } l=0, \hdots 2d+2$; and
\item $\left|\left|\sum_{l=0}^{2d+2} \psi_l(1) - 1\right|\right|_{2,K} =0.$
\end{itemize}
\end{lemma}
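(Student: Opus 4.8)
The plan is to realize the $2d+3$ maps as weighted sums over the levels of $2d+3$ central Rokhlin towers coming from the base dynamics, each carrying a fixed matricial copy of $M_n$ that is twisted by powers of $\gamma$ and of $v$ so as to become approximately equivariant. Since $\mathcal{R}\to \mathcal{M}^\omega\cap\mathcal{M}'$ is unital, $\mathcal{M}$ is McDuff as a $W^*$-bundle, so I can extract approximately central copies of $M_n$ from the $\mathcal{R}$-structure: after $N$ and $T$ are given I would produce unital $*$-homomorphisms $\theta^{(l)}\colon M_n\to\mathcal{M}$ (one per tower) whose images are mutually approximately commuting and which approximately commute, in $\|\cdot\|_{2,u}$, with the fixed finite set $\bigcup_{0\le j\le 2N}\gamma^{-j}(T)$. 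Enlarging $T$ by these finitely many $\gamma$-translates is harmless once $N$ is fixed, and keeping the $\theta^{(l)}$ honest $*$-homomorphisms (not merely approximate) is what will let the final $\psi_l$ be \emph{exactly} order zero. The value of $N$ is dictated by the next step: it must be large enough that towers of height $\ge N$ give end-of-tower errors below $\e$ for the generators in $S$.

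The geometric heart of the argument is a topological Rokhlin lemma for the homeomorphism induced by $\gamma$ on the open invariant set $X\setminus X_N$ of points of period $>N$. Applying it to the compact set $K\subset X\setminus X_N$ (in the spirit of the free case of Theorem \ref{theorem:free_case}, cf.\ \cite{SzaboWuZacharias}), for $N$ large relative to $\e$ and $S$ I would obtain positive functions $f^{(l)}_j\in C(X)\subset\mathcal{Z}(\mathcal{M})$, indexed by towers $l=0,\dots,2d+2$ and levels $j$ of height $\ge N$, with three features: orthogonality of levels within a tower, $f^{(l)}_j f^{(l)}_{j'}=0$ for $j\ne j'$; the dual action shifts levels, $\|\gamma(f^{(l)}_j)-f^{(l)}_{j+1}\|_{2,u}<\e$; and the whole family is a partition of unity over $K$, i.e.\ $\sum_{l,j} f^{(l)}_j=1$ exactly in $\|\cdot\|_{2,K}$. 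The number $2d+3$ of towers is exactly what the covering dimension $d$ of $X$ forces at this stage. Being central, these functions automatically commute with every matricial copy, so the commuting-tower structure comes for free.

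With these in hand I would set $\psi_l(x)=\sum_j (f^{(l)}_j)^{1/2}\,\gamma^j\big(\theta^{(l)}(v^{-j}xv^{j})\big)\,(f^{(l)}_j)^{1/2}$. Each $\psi_l$ is exactly c.p.c.\ order zero, since the within-tower orthogonality of the central functions kills all cross-level terms while on each level $\theta^{(l)}$ is a $*$-homomorphism conjugated by $v^j$, so orthogonality is preserved. Exact joint unitality on $K$ is then immediate: as $\theta^{(l)}(1)=1$ we get $\sum_l\psi_l(1)=\sum_{l,j} f^{(l)}_j=1$ in $\|\cdot\|_{2,K}$. Approximate centrality with respect to $T$ follows from centrality of the $f$'s and the choice of the $\theta^{(l)}$, after moving $t$ past $\gamma^j$ via $[\gamma^j(\theta^{(l)}(a)),t]=\gamma^j([\theta^{(l)}(a),\gamma^{-j}(t)])$. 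For equivariance I would expand $\gamma(\psi_l(x))$, use the level shift of the $f$'s, and reindex $j\mapsto j+1$; the twist $v^{-j}xv^{j}$ is arranged precisely so that $v^{-(j+1)}(vxv^{-1})v^{j+1}$ reappears, returning $\psi_l(vxv^*)$ up to the two boundary levels of the tower, an error of relative size $O(1/N)<\e$.

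The step I expect to be the main obstacle is the simultaneous control of approximate equivariance and approximate commuting ranges across distinct towers. The $\gamma^j$-spreading that produces equivariance also drags the copies through many automorphisms, so for $l\ne l'$ the elements $\gamma^j(\theta^{(l)}(M_n))$ and $\gamma^{j'}(\theta^{(l')}(M_n))$ must still be made to approximately commute for all the finitely many relevant levels $j,j'$. Here I would lean on the averaging and reindexing machinery of \cite{RokhlinDimension}: one chooses the $\theta^{(l)}$ not merely pairwise commuting but so that a sufficiently large finite set of their $\gamma$-translates is jointly approximately commuting, which is possible because $\mathcal{M}^\omega\cap\mathcal{M}'$ absorbs an arbitrarily large finite tensor power of $\mathcal{R}$. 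Checking that this can be arranged uniformly, together with extracting the topological towers with the exact partition-of-unity property over $K$, is the genuinely technical content; the remaining $\|\cdot\|_{2,u}$-estimates are bookkeeping set up so that Lemma \ref{lemma:join} applies afterwards.
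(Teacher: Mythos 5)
Your proposal is correct and follows essentially the same route as the paper's proof: the paper also applies the Hirshberg--Wu topological Rokhlin lemma to the period-$>N$ part of the base to get $2d+3$ towers of central functions (orthogonal levels, approximate shift, exact partition of unity over $K$), builds unital $*$-homomorphisms $\phi_l\colon M_n\to\mathcal{M}$ from the $\mathcal{R}$-embedding whose finitely many $\gamma$-translates approximately commute across towers and with $T$, and defines $\psi_l(x)=\sum_j \mu_j^{(l)}\gamma^j\bigl(\phi_l(v^{-j}xv^{j})\bigr)$, which is your formula since the tower functions are central. The only cosmetic difference is bookkeeping: the paper's equivariance error is controlled by the approximate-shift defect of the tower functions summed over orthogonal supports (giving $2\eta\|s\|$ with $\eta$ tied to the tower height via the Rokhlin lemma), rather than literally by two boundary levels, but this is the same quantitative mechanism you describe.
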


Before giving the detailed proof, we explain the idea behind it. By the assumptions on $\mathcal{M}$ we can find tracially-approximate $*$-homomorphisms from $M_n$ into $\mathcal{M}$  that are approximately central. The trick is to adjust and average these in the right way so that they become c.p.c.\ order zero maps with the needed properties as specified in the proposition. Since $\mathcal{M}$ is a $W^*$-bundle over $X$, we have a canonical inclusion $C(X) \subset Z(\mathcal{M})$ by definition. This allows us to use results about the dynamical system $X$ equipped with the action it gets from $\gamma$ (or better: the subsystem of elements with large enough period) in order to obtain the right elements to average over. 
More specifically, we need the following Rokhlin-type lemma from the context of topological dynamics:

\begin{lemma}[{\cite[Lemma 4.2]{HirshbergWu}}]\label{lemma:Rokhlin}
Let $X$ be a locally compact space with covering dimension $d < \infty$. Fix $k,m \in \N$ and a compact subset $K \subset X$, and suppose $\alpha\colon \Z \acts X$ is an action by homeomorphisms such that $|\Z \cdot x| > (d+1)(4m+1)$ for any $x \in X$. 
\begin{enumerate}
\item If $m \geq (2d+3)k-\frac{d}{2}-1$, then there exist open subsets $Z^{(0)}, \hdots, Z^{(2d+2)} \subset X$ such that
\begin{enumerate}[itemsep=1ex]
\item for any $l \in {\{0, \hdots, 2d+2\}}$, the sets $\alpha_{-m}(\overbar{Z}^{(l)}), \hdots, \alpha_{0}(\overbar{Z}^{(l)}), \hdots, \alpha_{m}(\overbar{Z}^{(l)})$ are pairwise disjoint;
\item $ K \subset \bigcup_{l=0}^{2d+2} \bigcup_{i=-(m-k)}^{m-k}\alpha_i(Z^{(l)}).$
\end{enumerate}
\item If $\e >0$ satisfies $m \geq (2d+3)k\left\lceil \frac{1}{\e}\right \rceil - \frac{d}{2}-1$, there exist open subsets $Z^{(0)}, \hdots, Z^{(2d+2)} \subset X$ satisfing the above two conditions, as well as compactly supported contractions $\{f_j^{(l)}\}_{l \in \{0, \hdots 2d+2\}; j \in \Z} \subset C(X)_+$ satisfying
\begin{enumerate}[itemsep=1ex]
\item $\textup{supp}(f_j^{(l)}) \subseteq 
\begin{cases}
\alpha_j(Z^{(l)}), & \text{if } |j| \leq m,\\
\emptyset, & \text{if } |j| > m\\
\end{cases}$ for any $l \in \{0, \hdots, 2d+2\};$
\item $\sum_{l=0}^{2d+2} \sum_{j=-m}^m f_j^{(l)}(x) \leq 1$ for all $x \in X$ with equality on $K$;
\item $\|f_j^{(l)} \circ \alpha_i - f_{j-i}^{(l)}\| < \e$ for all $j \in \Z$, for all $i \in \Z \cap [-k,k]$ and for all $l \in \{0,\hdots, 2d+2\}$.
\end{enumerate}
\end{enumerate}
\end{lemma}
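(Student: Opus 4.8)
The plan is to prove the two parts in sequence: first construct the open towers $Z^{(0)},\dots,Z^{(2d+2)}$ asserted in (1), and then obtain the functions $\mu_j^{(l)}$ of (2) from these towers by a trapezoidal averaging along orbits. Since $K$ is compact and the $\mu_j^{(l)}$ are to lie in $C_c(X)$, I would first pass to a relatively compact open neighbourhood of $\bigcup_{|j|\le m}\alpha_j(K)$ and work inside its closure, a compact (hence normal) set, so that all the covers and partitions of unity below are finite and subordinate partitions of unity exist. Note that the orbit hypothesis $|\Z\cdot x|>(d+1)(4m+1)$ makes the action $\ell$-aperiodic for every $\ell\le(d+1)(4m+1)$, i.e.\ $\alpha_j(x)\ne x$ whenever $1\le|j|\le(d+1)(4m+1)$; this is the only way the period bound is used.

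For part (1) I would combine a topological marker argument with a dimension-theoretic colouring. Aperiodicity together with Hausdorffness gives, for each point, a small open neighbourhood $V$ that is $2m$-separated, meaning $\overbar{V}\cap\alpha_j(\overbar V)=\emptyset$ for $1\le|j|\le 2m$, and finitely many such $V$ cover $K$. A marker-lemma construction (greedily thinning these $V$ along orbits, as is standard for aperiodic $\Z$-actions) spreads the bases out in the orbit direction, so that their orbit-thickenings $\bigcup_{|j|\le 2m}\alpha_j(V)$ cover $K$ with multiplicity governed by $\dim X$ rather than by $m$. Feeding the resulting finite cover into the characterisation of covering dimension — any finite open cover of a space with $\dim X\le d$ admits a refinement that splits into $d+1$ subfamilies, each consisting of sets with pairwise disjoint closures — and upgrading ordinary disjointness to the \emph{dynamical} $2m$-separation demanded by (1a) is exactly what forces the number of towers up from $d+1$ to $2(d+1)+1=2d+3$, and what consumes a window of length $\sim(d+1)(4m+1)$ along each orbit. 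Taking $Z^{(l)}$ to be the union of the $l$-th colour class then yields towers with $\alpha_{-m}(\overbar{Z}^{(l)}),\dots,\alpha_m(\overbar{Z}^{(l)})$ pairwise disjoint, which is (1a), while the marker construction guarantees that the inner-window translates $\alpha_i(Z^{(l)})$ with $|i|\le m-k$ still cover $K$, which is (1b).

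For part (2) I would apply part (1) with $\tilde k:=k\lceil 1/\e\rceil$ in place of $k$ — this is legitimate precisely because the hypothesis $m\ge(2d+3)k\lceil1/\e\rceil-\tfrac d2-1$ of (2) is the hypothesis of (1) for $\tilde k$ — obtaining towers whose inner-window translates ($|i|\le m-\tilde k$) already cover $K$, leaving a buffer of width $\tilde k$ at each end of every tower. Choose a sharp partition of unity $\{\rho_j^{(l)}\}$ with $\operatorname{supp}\rho_j^{(l)}\subseteq\alpha_j(Z^{(l)})$ and $\sum_{l,j}\rho_j^{(l)}=1$ on $K$; this exists by normality and the covering from (1), and it satisfies (2a) and (2b) but not the approximate equivariance (2c). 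Averaging, for each fixed $l$, the sequence $(\rho_j^{(l)})_j$ along the orbit against a trapezoidal kernel that is flat on $|j|\le m-\tilde k$ and decays linearly to $0$ across the buffer — implementing the index shift by $\alpha$, i.e.\ forming suitable combinations of the transported pieces $\rho_{j'}^{(l)}\circ\alpha_{j'-j}$ — produces $\mu_j^{(l)}\in C_c(X)_{+,\le 1}$ that retain the support and sum conditions (the flat middle together with the inner-window covering secures equality on $K$) and now also satisfy (2c): a shift by $|i|\le k$ changes the slowly varying trapezoidal weight by at most $|i|/\tilde k\le k/(k\lceil1/\e\rceil)\le\e$, while the base pieces are transported exactly by $\alpha$.

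The main obstacle is part (1), and specifically the tension in its condition (1a): tower disjointness is a \emph{dynamical} separation over a window of length $\sim 4m$, so a naive colouring of the static cover would need a number of colours proportional to $m$ rather than to $d$. Resolving this is the whole reason for interleaving the marker construction — which spreads the bases out so that their orbit-thickenings have multiplicity controlled by $\dim X$ — with the finite-dimensional colouring, and it is where the orbit-length hypothesis is consumed, since the return windows must be long enough to host all $2d+3$ colour classes simultaneously. A secondary technical point, needed both for (1a) and for the clopen decomposition of each $\rho_j^{(l)}$ into its tower levels in part (2), is to secure disjoint \emph{closures} rather than merely disjoint sets, which costs only a harmless shrinking of the towers. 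This strategy is exactly what is carried out in \cite[Lemma 4.2]{HirshbergWu}.
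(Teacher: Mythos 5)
The first thing to note is that the paper does not prove this lemma at all: it is imported verbatim as \cite[Lemma 4.2]{HirshbergWu} and used as a black box in the proof of Lemma \ref{lemma:large_period}, so there is no internal argument to compare yours against. Measured against the cited source, your outline is a faithful pr\'ecis of the actual proof architecture: dynamical $2m$-separation of small base neighbourhoods (using that the orbit bound $|\Z\cdot x|>(d+1)(4m+1)$ rules out periods up to that length), a colouring argument driven by $\dim X\leq d$ to organise the bases into $2d+3$ towers, and for part (2) the reduction of the hypothesis $m\geq(2d+3)k\lceil 1/\e\rceil-\tfrac d2-1$ to the hypothesis of part (1) with $\tilde k=k\lceil 1/\e\rceil$, followed by a trapezoidal reweighting across the buffer of width $\tilde k$. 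Your part (2) is essentially complete: the Lipschitz estimate $k/\tilde k=1/\lceil 1/\e\rceil\leq\e$ is exactly the point, the decomposition of the partition-of-unity pieces into tower levels is legitimate precisely because (1a) gives disjoint \emph{closures} (which you correctly flag), and the flat middle of the trapezoid together with the inner-window covering (1b) yields equality of the sum on $K$.

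The one substantive reservation is that the heart of part (1) is gestured at rather than executed. The step you yourself identify as the main obstacle --- interleaving a marker construction with the $(d+1)$-colour refinement so that the colour classes acquire \emph{dynamical} $2m$-separation with only $2d+3$ towers, and verifying that this consumes exactly an orbit window of length $(d+1)(4m+1)$ while the inner translates $\alpha_i(Z^{(l)})$, $|i|\leq m-k$, still cover $K$ --- is asserted and then outsourced to the citation in your final sentence. As a blind proof this leaves the decisive combinatorial content unproven (for instance, your claim that the marker thinning alone controls the multiplicity of the orbit-thickenings is imprecise: the multiplicity control comes from the dimension-theoretic refinement, the markers only spread bases along orbits). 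Since the paper itself treats the lemma as an external result, this does not put you behind the paper; but be aware that what you have written is a correct and well-aimed outline of the Hirshberg--Wu argument, not a self-contained proof of part (1).
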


\begin{proof}[Proof of Lemma \ref{lemma:large_period}]
Let $(\mathcal{M},X,E)$ be a $W^*$-bundle satisfying the requirements of the proposition, with $d:= \op{dim}(X) < \infty$. Let $\gamma \in \op{Aut}(\mathcal{M})$. Fix some $v \in \mathcal{U}(M_n)$, $\e >0$ and $S \ssubset M_n$. We may assume that $\max_{s \in S}\{\|s\|\} \leq 1$. Let $\delta$ denote the automorphism $\op{Ad}(v)$ on $M_n$.
Choose
\[m \geq(2d+3)\left\lceil\frac{2}{\e} \right\rceil - \frac{d}{2} -1 \text{ and }\]
\[N > (d+1)(4m+1).\]
We show that this $N$ satisfies the requirements of the proposition.

Take $T \ssubset \mathcal{M}$ arbitrarily. 
By the assumptions on $\mathcal{M}$, we can find $2d+3$ unital $*$-homomorphisms ${\Phi_l\colon M_n \rightarrow \mathcal{M}^\omega \cap \mathcal{M}'}$ for $l=0, \hdots, 2d+2$ such that 
\[[\Phi_l(M_n), \gamma^z(\Phi_{l'}(M_n))]=0 \quad \text{for }l \neq l', z \in \Z.\]
By \cite[Proposition 3.11]{BBSTWW} there exists as unital $*$-homomorphism $\iota\colon M_n \rightarrow \mathcal{M}$, and this induces a unital $*$-homomorphism $\iota^\omega\colon M_n \rightarrow \mathcal{M}^\omega$. 
As unital $*$-homomorpisms $M_n \to \mathcal{M}^\omega$ are unique up to unitary equivalence, the maps $\Phi_l$ can be lifted to a sequence of unital $*$-homomorphisms into  $\mathcal{M}$. Indeed, given $l =0, \hdots, 2d+2$, we can find a unital $*$-homomorphism $\theta\colon M_n \rightarrow \mathcal{M}^\omega$ whose image commutes with the images of $\iota^\omega$ and $\Phi_l$ by the assumptions on $\mathcal{M}$. Let $u \in \mathcal{U}(M_n \otimes M_n)$ be a unitary implementing the flip automorphism on $M_n \otimes M_n$. Then 
\[w = (\Phi_l \otimes \theta)(u)(\iota^\omega \otimes \theta)(u) \in \mathcal{U}(\mathcal{M}^\omega)\] satisfies $\mathrm{Ad}(w) \circ \iota^\omega = \Phi_l$. As $w$ has finite spectrum it can be lifted to a sequence of unitaries $(w_n)_{n \in \N}$ in $\mathcal{M}$, and then the sequence of $*$-homomorphisms $(\mathrm{Ad}(w_n) \circ \iota)_{n \in \N}$ lifts $\Phi_l$. In particular, we can find unital $*$-homomorphisms ${\phi_l\colon M_n \rightarrow \mathcal{M}}$ for $l=0, \hdots, 2d+2$ such that for all $s \in S$, $t \in T$ and $i,j = -m,\hdots, m$ it holds that
\begin{align}
\|[\gamma^i(\phi_l(\delta^{-i}(s))), \gamma^j(\phi_{l'}(\delta^{-j}(s)))]\|_{2,u} &< \e \quad \text{ if } l \neq l', \text{ and } \label{eq:commutativity}\\
\|[\gamma^i(\phi_l(\delta^{-i}(s))), t]\|_{2,u} &< \e. \label{eq:centrality}
\end{align}

Let $X'\subset X$ denote the points with period greater than $N$ for the action that $\gamma$ induces on $X$ and take a compact set $K \subset X'$.
 Recall that $N > (d+1)(4m+1)$ and thus, we can apply Lemma \ref{lemma:Rokhlin} to the action that $\gamma$ induces on $X'$ and find compactly supported contractions $\{f_j^{(l)}\} \subset C(X')_+$ for $l=0, \hdots 2d+2$ and $j \in \Z$ satisfying
\begin{align}
\op{supp}\left(f_j^{(l)}\right) &= \emptyset \text{ for } |j| >m; \label{eq:support}\\
\gamma^i\left(f_j^{(l)}\right) \gamma^{i'}\left(f_{j'}^{(l)}\right) &=0 \text{ if } i+j \neq i'+j' \text{ and } i+j-(i'+j') \leq 2m;\label{eq:orthogonality}\\
\sum_{l=0}^{2d+2} \sum_{j=-m}^m f_j^{(l)}(x) &\leq 1 \text{  for all } x \in X', \text{ with equality in } K;\label{eq:joint_unitaliy}\\
\left|\left|\gamma \left( f_j^{(l)} \right) - f_{j+1}^{(l)}\right|\right| &< \e/2 \text{ for } l= 0, \hdots, 2d+2 \text{ and all } j \in \Z \label{eq:action}.
\end{align}
Note that it makes sense to apply $\gamma$ to the elements $f_j^{(l)}$. Since $X'$ is open and all $f_j^{(l)}$ are compactly supported, the $f_j^{(l)}$ can be extended to continuous functions on $X$ by setting them to be zero outside $X'$. In this way they can be considered as elements in $C(X) \subset Z(\mathcal{M})$. Define the maps
$\psi_l\colon M_n \rightarrow \mathcal{M}$ for $l=0, \hdots, 2d +2 $ by
\[\psi_l(z) :=  \sum_{j=-m}^m f_j^{(l)} \gamma^j(\phi_l(\delta^{-j}(z))) \quad \text {for } z \in M_n.\] 
These are clearly c.p.c.\ maps and they are also order zero since $f_i^{(l)}f_j^{(l)}=0$ if $i,j =-m, \hdots, m$ with $i \neq j$ by \eqref{eq:orthogonality}.
Fix $x \in K$. Recall that the trace $\tau_x$ on $\mathcal{M}$ is defined by composing the conditional expectation $E$ with evaluation at $x$. We get 
\begin{align*}\textstyle\|\sum_{l=0}^{2d+2}\psi_l(1) - 1\|_{2,\tau_x}
&= \textstyle\|\sum_{l=0}^{2d+2} \sum_{j=-m}^m f_j^{(l)} - 1\|_{2,\tau_x}\\
\overset{\eqref{eq:joint_unitaliy}}&{=} 0.
\end{align*}
When $l \neq l' $ and $s, s' \in S$ we get
\begin{align*}
\|[\psi_l(s),\psi_{l'}(s')]\|_{2,u}&= \|[ \sum_{j=-m}^m f_j^{(l)} \gamma^j(\phi_l(\delta^{-j}(s))),  \sum_{j=-m}^m f_j^{(l')} \gamma^j(\phi_{l'}(\delta^{-j}(s')))]\|_{2,u}\\
&=\| \sum_{i=-m}^m\sum_{j=-m}^m f_i^{(l)}f_j^{(l')}[ \gamma^i(\phi_l(\delta^{-i}(s))),  \gamma^j(\phi_{l'}(\delta^{-j}(s')))]\|_{2,u}\\
&\leq \sup_{x \in X} \sum_{i=-m}^m\sum_{j=-m}^m f_i^{(l)}(x)f_j^{(l')}(x)\|[ \gamma^i(\phi_l(\delta^{-i}(s))),  \gamma^j(\phi_{l'}(\delta^{-j}(s')))]\|_{2,\tau_x}\\
\overset{\eqref{eq:commutativity}}&{<}\sup_{x\in X} \sum_{i=-m}^m\sum_{j=-m}^m f_i^{(l)}(x)f_j^{(l')}(x) \e \overset{\eqref{eq:joint_unitaliy}}{\leq}\e.
\end{align*}
Similarly, using \eqref{eq:centrality} we also see that $\|[\psi_l(s),t]\|_{2,u} < \e$ for $l = 0, \hdots, 2d+2$, $t\in T$ and $s \in S$. 

Next, we show that these maps are approximately equivariant in the right sense. For $s \in S$ we get
\begin{align*}
\gamma(\psi_l(s)) &= \sum_{j=-m}^m \gamma(f_j^{(l)}) \gamma^{j + 1}(\phi_l(\delta^{-j}(s)))\\
&= \sum_{i=-m+1}^{m+1} \gamma(f_{i-1}^{(l)})\gamma^i(\phi_l(\delta^{-i+1}(s)))\\
\overset{\eqref{eq:support}}&{=} \sum_{i=-m}^{m+1} (\gamma(f_{i-1}^{(l)}) - f_i^{(l)}) \gamma^i(\phi_l(\delta^{-i+1}(s))) +\sum_{i=-m}^m f_i^{(l)}\gamma^i(\phi_l(\delta^{-i+1}(s)))\\
&=\sum_{i=-m}^{m+1} (\gamma(f_{i-1}^{(l)}) - f_i^{(l)}) \gamma^i(\phi_l(\delta^{-i+1}(s))) + \psi_l(\delta^1(s)).\\
\end{align*}
So, we get
\begin{align*}
\|\gamma(\psi_l(s)) -\psi_l(vsv^*)\|_{2,u}&= \|\sum_{i=-m}^{m+1} (\gamma(f_{i-1}^{(l)}) - f_i^{(l)}) \gamma^i(\phi_l(\delta^{-i+1}(s)))\|_{2,u}\\
&\leq  \|\sum_{i=-m}^{0} (\gamma(f_{i-1}^{(l)}) - f_i^{(l)}) \gamma^i(\phi_l(\delta^{-i+1}(s)))\|_{2,u} \\ & \quad+ \|\sum_{i=1}^{m+1} (\gamma(f_{i-1}^{(l)}) - f_i^{(l)}) \gamma^i(\phi_l(\delta^{-i+1}(s)))\|_{2,u}\\
\overset{\eqref{eq:orthogonality}, \eqref{eq:action}}&{<}\e/2+\e/2 = \e.
\end{align*}
As we have shown that the maps $\psi_l$ satisfy all the necessary properties, this ends the proof.
\end{proof}
\subsection{Proof of Theorem \ref{theorem:main_technical}}
Combining the results from the previous two subsections, we can prove the main technical result of this paper. Before doing this, we need one more small lemma: 

\begin{lemma}\label{lemma:tracial_plus}
Let $A$ be a $C^*$-algebra and $\tau \in T(A)$. Suppose $a$ is a self-adjoint element and $b \geq 0$. Then 
\[\|(a-b)_+\|_{2,\tau} \leq \|a\|_{2,\tau}.\]
\end{lemma}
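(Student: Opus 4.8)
The plan is to reduce the whole statement to a single computation with the trace inner product $\langle x,y\rangle_\tau = \tau(y^*x)$. Write $c := a-b$, which is self-adjoint, and decompose it as $c = c_+ - c_-$ with $c_+, c_- \geq 0$ and $c_+ c_- = 0$, exactly as in the Notation preceding the statement. Since $a$ is self-adjoint and $c_+$ is positive, the desired inequality is equivalent to $\tau(c_+^2) \leq \tau(a^2)$. The first step is to record the algebraic identity $c\,c_+ = c_+^2$, which follows immediately from $c_- c_+ = (c_+ c_-)^* = 0$.

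Next I would use $a = c + b$ to compute $a c_+ = c\,c_+ + b\,c_+ = c_+^2 + b\,c_+$, and rearrange this to $c_+^2 = a c_+ - b c_+$. Applying $\tau$ and noting that $\tau(b c_+) = \tau(b^{1/2} c_+ b^{1/2}) \geq 0$ (the trace of a product of two positive elements is nonnegative), I obtain $\tau(c_+^2) \leq \tau(a c_+)$. Here $\tau(a c_+)$ is real, since $a$ and $c_+$ are self-adjoint and $\tau$ is tracial, so $\tau(a c_+) = \overline{\tau((a c_+)^*)} = \overline{\tau(c_+ a)} = \overline{\tau(a c_+)}$.

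Then I would bound $\tau(a c_+)$ via the Cauchy--Schwarz inequality for $\langle\cdot,\cdot\rangle_\tau$, applied to the self-adjoint pair $a, c_+$: this gives $\tau(a c_+) \leq \tau(a^2)^{1/2}\tau(c_+^2)^{1/2}$. Chaining the two estimates yields $\tau(c_+^2) \leq \tau(a^2)^{1/2}\tau(c_+^2)^{1/2}$. If $\tau(c_+^2) = 0$ the claim is trivial; otherwise I divide by $\tau(c_+^2)^{1/2} > 0$ to conclude $\tau(c_+^2)^{1/2} \leq \tau(a^2)^{1/2}$, that is, $||(a-b)_+||_{2,\tau} \leq ||a||_{2,\tau}$.

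There is no serious obstacle in this argument; the only points that require a little care are spotting the identity $c\,c_+ = c_+^2$ (which is what makes the positive part interact cleanly with $a$ after absorbing the orthogonal negative part), applying Cauchy--Schwarz to the correct pair of self-adjoint elements so that the factor $\tau(c_+^2)^{1/2}$ appears on the right, and splitting off the degenerate case $\tau(c_+^2) = 0$ before dividing.
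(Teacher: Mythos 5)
Your proof is correct, but it takes a genuinely different route from the paper's. The paper expands $a^2$ directly using the decomposition $a=(a-b)_+-(a-b)_-+b$ and shows that everything other than $\tau((a-b)_+^2)$ has nonnegative trace by a completion-of-squares argument: using traciality to combine the cross terms into $2\tau\bigl(b^{1/2}(a-b)b^{1/2}\bigr)$, bounding $(a-b)\geq -(a-b)_-$ under conjugation by $b^{1/2}$, and recognizing the result as $\tau\bigl([b-(a-b)_-]^2\bigr)\geq 0$. You instead linearize: the orthogonality identity $(a-b)(a-b)_+=(a-b)_+^2$ gives $\tau(c_+^2)\leq\tau(ac_+)$ after discarding $\tau(bc_+)\geq 0$, and then Cauchy--Schwarz plus the division trick (with the degenerate case $\tau(c_+^2)=0$ split off) finishes the argument. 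Both proofs use traciality in an essential way (yours in $\tau(bc_+)=\tau(c_+^{1/2}bc_+^{1/2})$, the paper's in merging the cross terms), and both hinge on the same structural fact that the negative part is annihilated by the positive part. The paper's argument is purely algebraic and yields the inequality of squares in one pass with no appeal to Cauchy--Schwarz; yours is shorter and more modular, resting on two standard facts about traces, at the mild cost of the extra case distinction before dividing.
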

\begin{proof}
We have
\[a = (a-b)_+ - (a-b)_- +b,\]
so 
\begin{align*} a^2 &= (a-b)^2_+ + (a-b)^2_- +b^2 + b(a-b)_+ - b(a-b)_- +(a-b)_+b - (a-b)_-b\\
&=(a-b)^2_+ + (a-b)^2_- +b^2 + b(a-b) +(a-b)b.
\end{align*}
Since $(a-b) \geq - (a-b)_-$, we also get that 
\begin{align*}
\tau((a-b)^2_- +b^2 + b(a-b) +(a-b)b) &=\tau((a-b)^2_- +b^2 + 2b^{1/2}(a-b)b^{1/2}) \\
&\geq \tau((a-b)^2_- +b^2 - 2b^{1/2}(a-b)_-b^{1/2})\\
&= \tau([b-(a-b)_-]^2) \geq 0.
\end{align*}
Hence, 
\begin{align*}
\|(a-b)_+\|_{2,\tau} &= \tau((a-b)^2_+)\\
& \leq \tau((a-b)^2_+ + (a-b)^2_- +b^2 + b(a-b) +(a-b)b)\\
&= \tau(a^2) \\ &= \|a\|_{2,\tau}.
\end{align*}
\end{proof}

\begin{proof}[Proof of Theorem {\ref{theorem:main_technical}}]
Fix a finite subset $S \ssubset M_n$ generating $M_n$ linearly. Denote $d := \op{dim}(X) < + \infty$. Note that by Lemma \ref{lemma:join} it suffices to show that for  
any $\e >0$ and $T \ssubset \mathcal{M}$ there exists $2d+4$ c.p.c.\ order zero maps $\psi_l\colon M_n \rightarrow \mathcal{M}$ for $l=0, \hdots, 2d+3$ that satisfy
\begin{itemize}[itemsep=1ex, topsep=1ex]
\item $\|[\psi_i(s), \psi_j(s')]\|_{2,u} <\e$ for $s,s' \in S$ and $0 \leq i \neq j \leq 2d+3$;
\item $\|[\psi_i(s), t]\|_{2,u} <\e$ for $s \in S$, $t \in T$ and $i=0, \hdots, 2d+3$;
\item $\|\psi_i(vsv^*)- \gamma(\psi_i(s))\|_{2,u} < \e$ for $i =0, \hdots, 2d+3$; and
\item $\|\left(1 - \sum_{i=1}^m \psi_i(1)\right)_+\|_{2,u} < \e$.
\end{itemize}
So, fix $\e >0$ and $T \ssubset \mathcal{M}$.
Fix the $N \in \N$ from Lemma \ref{lemma:large_period} that corresponds to the tuple $(\e,S)$. By Corollary \ref{corollary:small_periods} (using the same notation as in the statement of that corollary) we know that there is an equivariant c.p.c.\ order zero map 
\[\Psi_0\colon (M_n,\op{Ad}(v)) \rightarrow (\mathcal{M}^\omega \cap \mathcal{M}',\gamma^\omega) \quad \text{such that } 1- \Psi_0(1) \in J_{X_N}.\] 
Since order zero maps from matrix algebras always lift, we can lift $\Psi_0$ to a sequence of maps into $\mathcal{M}$, and going far enough in the sequence allows us to obtain a c.p.c.\ order zero map $\psi_{0}\colon M_n \rightarrow \mathcal{M}$ satisfying
\begin{itemize}[itemsep=1ex,topsep=1ex]
\item $\|[\psi_0(s), t]\|_{2,u} <\e$ for $s \in S$ and $t \in T$; 
\item $\|\psi_0(vsv^*)- \gamma(\psi_0(s))\|_{2,u} < \e$ for $s \in S$; and
\item $\|1 -  \psi_0(1)\|_{2,X_N} < \e$.
\end{itemize}
By continuity we can find an open subset $V \subset X$ containing $X_N$ such that 
\begin{equation}\label{eq:almost_unital_on_V}\|1 -  \psi_0(1)\|_{2,V} < \e.\end{equation} 
Then the complement $X \setminus V$ is compact and all points there have period greater than $N$, so by Lemma \ref{lemma:large_period} we find  $2d+3$ c.p.c.\ order zero maps $\psi_l\colon M_n \rightarrow \mathcal{M}$ for $l=1, \hdots 2d+3$ such that for all  $t' \in (T \cup \psi_0(S))$ and $s,s' \in S$ it holds that
\begin{itemize}[itemsep=1ex,topsep=1ex]
\item $\|\psi_l(vsv^*) - \gamma(\psi_l(s))\|_{2,u} < \e$ for $l=1, \hdots 2d+3$;
\item 
$\| [\psi_l(s),\psi_{l'}(s')]\|_{2,u} < \e$ for  $1 \leq l \neq l' \leq 2d+3$;
\item
$\|[\psi_{l}(s), t']\|_{2,u}  < \e$ for $ 1 \leq l \leq 2d+3$; and
\item
$ \|1 - \sum_{l=1}^{2d+3} \psi_l(1)\|_{2,X \setminus V} < \e.$
\end{itemize}
Combining the last estimate with \eqref{eq:almost_unital_on_V} and applying Lemma \ref{lemma:tracial_plus} (using $a = 1-\psi_0(1)$, $b = \sum_{l=1}^{2d+3} \psi_l(1)$ for traces in $V$ and $a= 1-\sum_{l=1}^{2d+3} \psi_l(1)$, $b= \psi_0(1)$ for traces in $X \setminus V$), we conclude that 
\[ \left|\left|\left(1-\sum_{l=0}^{2d+3} \psi_l(1)\right)_+\right|\right|_{2,u} < \e.\]
All necessary conditions to apply Lemma \ref{lemma:join} are fulfilled. This ends the proof. 
\end{proof}

\section{Main results}\label{section:main}
In this last section, we return to actions on an algebraically simple, separable, nuclear, $\mathcal{Z}$-stable $C^*$-algebra $A$ for which $T(A)$ is a non-empty Bauer simplex with finite-dimensional extremal boundary. In this case, the uniform tracial completion $\bar{A}^u$ is isomorphic to the trivial $W^*$-bundle $C_\sigma(\partial_eT(A), \mathcal{R})$, so we can apply the results of the previous section to obtain structural results about actions on $A$.

\begin{theorem}\label{theorem:main}
Let $A$ be an algebraically simple, separable, nuclear, $\mathcal{Z}$-stable $C^*$-algebra. Assume that $T(A)$ is a non-empty Bauer simplex and that $\partial_e T(A)$ is finite-dimensional, and let $\alpha\colon \Z \acts A$ be an action on $A$. Then the following hold:
\begin{enumerate}
\item For each $n \in \N$, there exists a unital $*$-homomorphism $M_n \rightarrow (A^\omega \cap A')^{\alpha^\omega}$;
\item If $\alpha$ is strongly outer, then for any $n \in \N$ and unitary representation $\nu\colon \Z \rightarrow \mathcal{U}(M_n)$ there exists a unital equivariant $*$-homomorphism $(M_n, \op{Ad}(\nu)) \rightarrow (A^\omega \cap A', \alpha^\omega)$. 
\end{enumerate}
\end{theorem}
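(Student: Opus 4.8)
The plan is to move the whole question into the $W^*$-bundle $\mathcal{M} := \bar{A}^u$ and feed it into Theorem \ref{theorem:main_technical}. Under the standing hypotheses, Example \ref{example:W-bundles} identifies $\mathcal{M}$ with the strictly separable trivial bundle $C_\sigma(X,\mathcal{R})$ over the finite-dimensional compact metrizable space $X := \partial_e T(A)$. Because $\alpha$ permutes $T(A)$, hence $X$, and is $\|\cdot\|_{2,T(A)}$-isometric, it extends uniquely to an automorphism $\gamma \in \op{Aut}(\mathcal{M})$ in the strict sense of Definition \ref{def:morphism_W*}, whose induced action on $X$ is the one $\alpha$ induces on $\partial_e T(A)$. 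As recalled at the start of Section \ref{section:W*-bundles}, the identification $(\bar{A}^u)^\omega \cong A^\omega$ restricts to an isomorphism $A^\omega \cap A' \cong \mathcal{M}^\omega \cap \mathcal{M}'$ intertwining $\alpha^\omega$ and $\gamma^\omega$. Since every fiber of $\mathcal{M}$ is $\mathcal{R}$ and $\mathcal{R}$ is McDuff, a reindexing argument supplies a unital $*$-homomorphism $\mathcal{R} \to \mathcal{M}^\omega \cap \mathcal{M}'$, so all hypotheses of Theorem \ref{theorem:main_technical} are met. Under these identifications, statement (1) is the case $v = 1_{M_n}$ of Theorem \ref{theorem:main_technical} — a unital $*$-homomorphism into the fixed-point algebra is exactly an equivariant map for the trivial model action — while statement (2) is the case $v = \nu(1)$, since $\op{Ad}(\nu)$ is the $\Z$-action $z \mapsto \op{Ad}(\nu(1)^z)$.

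It then remains to verify the fiberwise hypothesis of Theorem \ref{theorem:main_technical}: for each $x \in X$ of finite period $p$ a unital equivariant $*$-homomorphism $(M_n, \op{Ad}(v)) \to (\mathcal{M}_{\bar{x}}^\omega \cap \mathcal{M}_{\bar{x}}', \gamma_{\bar{x}}^\omega)$. Here $\mathcal{M}_{\bar{x}} = \mathcal{M}/I_{\Z\cdot x}$ is a finite von Neumann algebra which, as a bundle over the $p$-point orbit, decomposes as $\bigoplus_{y \in \Z\cdot x}\mathcal{R}$, with $\gamma_{\bar{x}}$ cyclically shifting the $p$ summands and $\gamma_{\bar{x}}^{p}$ acting on each summand $\pi_{\tau_y}(A)'' \cong \mathcal{R}$ as the weak extension of $\alpha^{p}$. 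Passing to central sequences gives $\mathcal{M}_{\bar{x}}^\omega \cap \mathcal{M}_{\bar{x}}' \cong \bigoplus_{y}(\mathcal{R}^\omega \cap \mathcal{R}')$, cyclically shifted by $\gamma_{\bar{x}}^\omega$. A standard induction over this shift reduces the existence of the desired equivariant map to finding, on a single summand, an equivariant embedding $(M_n, \op{Ad}(v^{p})) \to (\mathcal{R}^\omega \cap \mathcal{R}', \theta^\omega)$, where $\theta$ is the automorphism of $\mathcal{R}$ induced by $\gamma_{\bar{x}}^{p}$. For statement (1) we have $v = 1$, so $\op{Ad}(v^p)$ is trivial and one only needs a unital copy of $M_n$ in $(\mathcal{R}^\omega \cap \mathcal{R}')^{\theta^\omega}$; this holds for an arbitrary automorphism $\theta$ by the structure theory of automorphisms of the hyperfinite II$_1$ factor and requires no outerness.

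For statement (2) this is exactly where strong outerness enters, in its stronger $\Z$-action form: the extremal trace $\tau_x$ is $\alpha^{p}$-invariant, so strong outerness forces the weak extension $\theta$ of $\alpha^{p}$ to $\pi_{\tau_x}(A)'' \cong \mathcal{R}$ to be outer. I would then invoke Connes' theorem that outer automorphisms of $\mathcal{R}$ have the Rokhlin property, together with the attendant absorption of finite-dimensional model actions, to realize the prescribed model $(M_n, \op{Ad}(v^{p}))$ equivariantly inside $(\mathcal{R}^\omega \cap \mathcal{R}', \theta^\omega)$; unwinding the induction then yields the equivariant embedding into $\mathcal{M}_{\bar{x}}^\omega \cap \mathcal{M}_{\bar{x}}'$. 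With the fiberwise hypotheses in hand, Theorem \ref{theorem:main_technical} produces the unital equivariant $*$-homomorphism $(M_n, \op{Ad}(v)) \to (\mathcal{M}^\omega \cap \mathcal{M}', \gamma^\omega) \cong (A^\omega \cap A', \alpha^\omega)$, proving both claims. I expect the genuine obstacle to be precisely this von Neumann-algebraic fiber input: producing the equivariant copy of the model action in the central sequence algebra of a shift-type outer automorphism, handling the induced structure across the orbit uniformly, and confirming that strong outerness of $\alpha$ really yields fiberwise outerness of $\gamma_{\bar{x}}^{p}$.
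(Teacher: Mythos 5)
Your proposal is correct in its architecture and that architecture coincides with the paper's proof: identify $\bar{A}^u$ with the trivial bundle $C_\sigma(\partial_e T(A),\mathcal{R})$, use $A^\omega \cap A' \cong \mathcal{M}^\omega\cap\mathcal{M}'$, and reduce everything to the finite-orbit fiber hypothesis of Theorem \ref{theorem:main_technical}. The difference lies in how that hypothesis is verified. The paper never splits the orbit: it forms the averaged trace $\bar{\tau} = \frac{1}{|\Z\cdot\tau|}\sum_{\sigma\in\Z\cdot\tau}\sigma$, identifies $\mathcal{M}/I_{\Z\cdot\tau} \cong \pi_{\bar{\tau}}(A)'' \cong \bigoplus_{\sigma\in\Z\cdot\tau}\mathcal{R}$, and applies Szab\'o's equivariant absorption result \cite[Proposition 5.19]{EquivariantSI} to the whole orbit algebra at once --- with no outerness needed for the trivial model in part (1), and with proper outerness of $\alpha_{\bar{\tau}}$ (deduced from strong outerness via \cite[Proposition 3.11]{GardellaHirshberg}) in part (2). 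You instead unwind the cyclic shift over the orbit by an explicit induction (essentially the same trick the paper uses for finite groups in the proof of Theorem \ref{theorem:main_result_free}), reducing to a single fiber $\mathcal{R}$ with the automorphism $\theta$ given by the weak extension of $\alpha^p$, and then call on classical single-automorphism results of Connes. This is a legitimate alternative that trades the citation of \cite{EquivariantSI} for a hands-on argument, and the reduction itself is sound: inner perturbations act trivially on central sequence algebras, so outer conjugacy statements about $\theta$ transfer to equivariant statements about $(\mathcal{R}^\omega\cap\mathcal{R}',\theta^\omega)$; in particular, for (1), Connes' outer conjugacy classification gives $\theta$ outer conjugate to $\theta\otimes\op{id}_\mathcal{R}$, whence the unital copy of $M_n$ in the fixed points, with no outerness assumption.

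The one step you must repair is the von Neumann input for part (2). ``Outer automorphisms of $\mathcal{R}$ have the Rokhlin property'' is false as stated: mere outerness of $\theta$ is not enough (a period-two outer automorphism, such as the flip on $\mathcal{R}\otimes\mathcal{R}\cong\mathcal{R}$, admits no Rokhlin towers and does not absorb arbitrary finite-dimensional models). Connes' Rokhlin-type theorem and the ensuing uniqueness/absorption require aperiodicity, i.e.\ $\theta^k$ outer for all $k\neq 0$. Your hypotheses do supply this, but you have to invoke them at every power: $\tau_x$ is invariant under $\alpha^{kp}$ for every $k$, so strong outerness of $\alpha$ \emph{as a $\Z$-action}, applied to each group element $kp\neq 0$, makes every nonzero power $\theta^k$ outer. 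With aperiodicity in hand, $\theta$ and $\theta\otimes\bigotimes_{k\in\N}\op{Ad}(v^p)$ are both free $\Z$-actions on $\mathcal{R}$, hence cocycle conjugate (Connes; or Ocneanu \cite{Ocneanu}), and the tail embeddings of $M_n$ into the Bernoulli-type factor produce the required equivariant unital copy of $(M_n,\op{Ad}(v^p))$ in $(\mathcal{R}^\omega\cap\mathcal{R}',\theta^\omega)$. With that correction, unwinding your orbit induction completes the proof along your route.
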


\begin{proof}
We will prove this using Theorem \ref{theorem:main_technical}. Take a $C^*$-algebra $A$ satisfying the assumptions above. Recall from Example \ref{example:W-bundles} that $\mathcal{M}:= \bar{A}^u$ is isomorphic to the trivial $W^*$-bundle $C_\sigma(\partial_e T(A), \mathcal{R})$. Furthermore, the unit ball of $A$ is $\|\cdot\|_{2,u}$-dense in the unit ball of $\bar{A}^u$ and hence $A^\omega\cap A' \cong \mathcal{M}^\omega \cap \mathcal{M}'$. We will denote the extension of $\alpha$ to $\mathcal{M}$ by $\gamma$.

The $W^*$-bundle $\mathcal{M}$ is strictly separable because $A$ itself is separable. Since it is a trivial $W^*$-bundle with fibers isomorphic to $\mathcal{R}$, there exists a unital $*$-homomorphism $\mathcal{R} \rightarrow \mathcal{M}^\omega \cap \mathcal{M'}$. 
Now, take any $\tau \in \partial_e T(A)$ that has finite orbit for the action induced on $\partial_e T(A)$ and consider the trace 
\[\bar{\tau} := \frac{1}{|\Z \cdot \tau|}\sum_{\sigma \in \Z \cdot \tau} \sigma.\]
Let $\pi_{\bar{\tau}}$ denote the GNS representation of $\mathcal{M}$ associated to $\bar{\tau}$. The kernel of this map is $I_{\Z \cdot \tau}$ and hence 
\begin{equation}\label{eq:isomorphism_GNS}
\mathcal{M}/I_{\Z \cdot \tau} \cong \pi_{\bar{\tau}}(\mathcal{M}) \cong \pi_{\bar{\tau}}(A)''.
\end{equation}
 Denote by $\alpha_{\bar{\tau}}$ the action on $\pi_{\bar{\tau}}(A)''$ induced by $\alpha$ that makes $\pi_{\bar{\tau}}$ equivariant. Under the isomorphism \eqref{eq:isomorphism_GNS}, $\alpha_{\bar{\tau}}$ corresponds to the action that $\gamma$ induces on the quotient $\mathcal{M}/I_{\Z \cdot \tau}$.

 Take a unitary representation $\nu\colon \Z \rightarrow \mathcal{U}(M_n)$. We wish to determine when it is possible to find a unital equivariant $*$-homomorphism $(M_n, \op{Ad}(\nu)) \rightarrow (A^\omega \cap A', \alpha^\omega)$. By Theorem \ref{theorem:main_technical} and all the correspondences previously established, it suffices to show that for any 
$\tau \in \partial_e T(A)$ that has finite orbit for the action induced on $\partial_e T(A)$, there exists a unital equivariant $*$-homomorphism
\begin{equation}\label{eq:absorption_sum_fibers}(M_n, \op{Ad}(\nu)) \rightarrow \left((\pi_{\bar{\tau}}(A)'')^\omega \cap \pi_{\bar{\tau}}(A)', (\alpha_{\bar{\tau}})^\omega\right).
\end{equation}

For all such $\tau \in \partial_e T(A)$ it holds that  
$\pi_{\bar{\tau}}(A)'' \cong \bigoplus_{\sigma \in \Z \cdot \tau} \pi_\sigma(A)''$ (see \cite[Chapter 6]{Dixmier}). Since $\tau$ is extremal all direct summands are factors and because $A$ is nuclear, it follows from Connes' theorem (see \cite{Connes76,ChoiEffros}) that \[ \pi_{\bar{\tau}}(A)'' \cong \bigoplus_{\sigma \in \Z \cdot \tau}  \mathcal{R}.\]
In case $\nu$ is the trivial representation, the action $\op{Ad}(\nu)$ on $M_n$ is the trivial action and it follows directly from 
\cite[Proposition 5.19]{EquivariantSI} that \eqref{eq:absorption_sum_fibers} holds for all extremal traces with finite period. This proves (1). 

Now assume we are in case (2), i.e.\ $\alpha$ is strongly outer. Let $\nu$ be an arbitrary unitary representation. Fix $\tau \in \partial_e T(A)$ with finite period. Since $\bar{\tau}$ is invariant under $\alpha$, by definition of strong outerness we get that $\alpha_{\bar{\tau}}$ is pointwise outer, and even properly outer (see \cite[Remark 2.17]{GardellaLupini18}). Define the action $\delta\colon \Z  \acts  \mathcal{R}$ by extending ${\bigotimes_{n=1}^\infty \op{Ad}(\nu)\colon \Z \acts \bigotimes_{\N} M_n}$ to $\mathcal{R}$. Then \cite[Proposition 5.19]{EquivariantSI} shows that there exists a unital equivariant embedding of  $(\mathcal{R}, \delta)$ into the tracial central sequence algebra of $\pi_{\bar{\tau}}(A)''$, and thus also an embedding of $(M_n, \op{Ad}(\nu))$. Again, condition \eqref{eq:absorption_sum_fibers} is satisfied for all relevant traces. This proves (2).
\end{proof}

From the previous theorem we can derive the main results of this paper (Theorems \ref{theorem:intro_main1} and \ref{theorem:intro_main2}):
\begin{theorem}
Let $A$ be an algebraically simple, separable, nuclear, $\mathcal{Z}$-stable $C^*$-algebra. Assume that $T(A)$ is a non-empty Bauer simplex and that $\partial_e T(A)$ is finite-dimensional, and let $\alpha\colon \Z \acts A$ be an action on $A$. Then the following hold:
\begin{enumerate}
\item $\alpha \simeq_{\mathrm{cc}} \alpha \otimes \op{id}_\mathcal{Z}$;
\item If $\alpha$ is strongly outer, $\op{dim}^c_{\op{Rok}}(\alpha) \leq 2$. 
\end{enumerate}
\end{theorem}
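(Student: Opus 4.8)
The plan is to obtain both statements as direct consequences of Theorem \ref{theorem:main}, by feeding its output into the two reduction results established in Sections \ref{section:reduct_1} and \ref{section:reduct_2}. All the genuine technical work has already been carried out in the $W^*$-bundle analysis, so this final step amounts to checking that the hypotheses of the reduction theorems are met and invoking them. I would begin by recording that $A$ satisfies the standing assumptions of both Theorem \ref{theorem:reduction_tracial_property} and Theorem \ref{theorem:reduction_rokhlin_dimension} (algebraic simplicity, separability, nuclearity, $\mathcal{Z}$-stability, with $T(A)$ a Bauer simplex and $\partial_e T(A)$ finite-dimensional), and that these are exactly the hypotheses of Theorem \ref{theorem:main} as well.

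For part (1), I would apply Theorem \ref{theorem:reduction_tracial_property} with $G = \Z$, which is amenable so that the theorem applies to $\alpha \colon \Z \acts A$. That result gives the equivalence of $\alpha \simeq_{\mathrm{cc}} \alpha \otimes \op{id}_\mathcal{Z}$ with the existence of a unital $*$-homomorphism $M_n \to (A^\omega \cap A')^{\alpha^\omega}$ for some $n \geq 2$. Theorem \ref{theorem:main}(1) supplies such a homomorphism for every $n \in \N$, in particular for $n = 2$, so the equivalence yields $\alpha \simeq_{\mathrm{cc}} \alpha \otimes \op{id}_\mathcal{Z}$ at once.

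For part (2), I would invoke Theorem \ref{theorem:reduction_rokhlin_dimension}, which reduces the bound $\op{dim}^c_{\op{Rok}}(\alpha) \leq 2$ to showing that for each unitary representation $\nu \colon \Z \to \mathcal{U}(M_n)$ there exists a unital equivariant $*$-homomorphism $(M_n, \op{Ad}(\nu)) \to (A^\omega \cap A', \alpha^\omega)$. Under the strong outerness hypothesis on $\alpha$, Theorem \ref{theorem:main}(2) provides precisely these equivariant embeddings for every $n$ and every $\nu$, so the desired bound follows directly.

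Since both statements are now short deductions, there is essentially no obstacle remaining in this particular proof: the difficulty has been entirely absorbed into Theorem \ref{theorem:main} and, through it, into the technical machinery of Section \ref{section:W*-bundles}. The only points requiring care are bookkeeping ones, namely confirming that $\Z$ is amenable so that Theorem \ref{theorem:reduction_tracial_property} is applicable, and matching the hypotheses on $A$ across the three theorems being combined.
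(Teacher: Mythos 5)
Your proposal is correct and follows exactly the paper's own proof: the paper likewise derives both parts directly from Theorem \ref{theorem:main} combined with Theorem \ref{theorem:reduction_tracial_property} (for equivariant $\mathcal{Z}$-stability) and Theorem \ref{theorem:reduction_rokhlin_dimension} (for the Rokhlin dimension bound). Your additional bookkeeping remarks (amenability of $\Z$, matching of hypotheses across the three results) are accurate and only make explicit what the paper leaves implicit.
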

\begin{proof}
This follows directly from the previous theorem combined with Theorem \ref{theorem:reduction_tracial_property} and \ref{theorem:reduction_rokhlin_dimension}, respectively.
\end{proof}

Moreover, we obtain Corollary \ref{corollary:intro}:
\begin{corollary}
Let $A$ be an algebraically simple, separable, nuclear, $\mathcal{Z}$-stable $C^*$-algebra. Assume that $T(A)$ is a non-empty Bauer simplex and that $\partial_e T(A)$ is finite-dimensional. Let $\mathcal{D}$ be a strongly self-absorbing $C^*$-algebra such that $A \cong A \otimes \mathcal{D}$. Then for any strongly outer action $\alpha\colon \Z \acts A$ and any action $\delta\colon \Z \acts \mathcal{D}$ it holds that $\alpha \simeq_{\mathrm{cc}} \alpha \otimes \delta$.
\end{corollary}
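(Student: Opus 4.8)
The plan is to deduce the statement from the preceding theorem together with Theorem \ref{theorem:finite_Rokhlin_dimension}, by replacing the arbitrary $\delta$ with a strongly self-absorbing \emph{model} action that dominates it. Concretely, I would set $\gamma := \delta^{\otimes\infty}$, the infinite tensor product action of $\Z$ on $\mathcal{D}^{\otimes\infty}$. Since $\mathcal{D}$ is strongly self-absorbing we have $\mathcal{D}^{\otimes\infty}\cong\mathcal{D}$, so $\gamma$ may be viewed as an action on $\mathcal{D}$ itself. The key structural input is that $\gamma$ is again strongly self-absorbing; this is exactly the infinite-tensor-power construction of Szab\'o \cite{SsaDynSyst2} (the same mechanism invoked in the proof of Theorem \ref{theorem:free_case} for $\bigotimes_\N\op{Ad}(\nu)$). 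In addition, reindexing the tensor factors gives a $\Z$-equivariant isomorphism $\mathcal{D}\otimes\mathcal{D}^{\otimes\infty}\cong\mathcal{D}^{\otimes\infty}$, so that $\delta\otimes\gamma$ is \emph{genuinely} conjugate to $\gamma$.

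First I would feed $\alpha$ into the machinery already established. Because $\alpha$ is strongly outer, the preceding theorem gives $\op{dim}^c_{\op{Rok}}(\alpha)\leq 2<\infty$. As $\Z$ is finitely generated and nilpotent and $A\cong A\otimes\mathcal{D}$ by hypothesis, Theorem \ref{theorem:finite_Rokhlin_dimension} applies to $\alpha$ and, since $\gamma$ is a strongly self-absorbing action on $\mathcal{D}$, yields
\[\alpha\ \simeq_{\mathrm{cc}}\ \alpha\otimes\gamma.\]

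It then remains to run a purely formal absorption chain. Using that cocycle conjugacy is stable under tensoring by the fixed system $(\mathcal{D},\delta)$, I would compute
\[\alpha\otimes\delta\ \simeq_{\mathrm{cc}}\ (\alpha\otimes\gamma)\otimes\delta\ \cong\ \alpha\otimes(\gamma\otimes\delta)\ \cong\ \alpha\otimes(\delta\otimes\gamma)\ \cong\ \alpha\otimes\gamma\ \simeq_{\mathrm{cc}}\ \alpha,\]
where the outer two equivalences come from $\alpha\simeq_{\mathrm{cc}}\alpha\otimes\gamma$ (tensored by $(\mathcal{D},\delta)$, respectively applied directly), while the three inner identifications are, in order, the reassociation and the flip of the tensor factors and the conjugacy $\delta\otimes\gamma\cong\gamma$ tensored on the left by $\op{id}_A$ — all genuine equivariant isomorphisms. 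This gives $\alpha\simeq_{\mathrm{cc}}\alpha\otimes\delta$, which is the assertion.

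The only genuinely non-formal ingredient is the claim that $\gamma=\delta^{\otimes\infty}$ is strongly self-absorbing for the arbitrary $\delta$; once this is in hand, everything else is bookkeeping with conjugacies. Accordingly, the main point to check carefully is the applicability of the infinite-tensor-power result to a general action on $\mathcal{D}$, and the accompanying need to track which equivalences are genuine conjugacies (the flips, the reassociation, and $\delta\otimes\gamma\cong\gamma$) and which are merely cocycle conjugacies (those arising from the Rokhlin-dimension absorption), since only the latter carry nontrivial cocycles.
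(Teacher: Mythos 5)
Your formal absorption chain is sound, and the reindexing conjugacy $\delta\otimes\delta^{\otimes\infty}\cong\delta^{\otimes\infty}$ is indeed a genuine equivariant isomorphism; the gap is exactly at the ingredient you flagged, namely the claim that $\gamma=\delta^{\otimes\infty}$ is a strongly self-absorbing action for an \emph{arbitrary} $\delta:\Z\acts\mathcal{D}$. This is not what the infinite-tensor-power result of Szab\'o gives: \cite[Proposition 6.3]{SsaDynSyst2} (the result invoked in the proof of Theorem \ref{theorem:free_case}) concerns $\bigotimes_{\N}\op{Ad}(\nu)$ for a \emph{unitary representation} $\nu: G\rightarrow\mathcal{U}(M_n)$, and its proof hinges on the fact that the flip on $M_n^{\otimes k}\otimes M_n^{\otimes k}$ is implemented by a permutation unitary that commutes exactly with $u\otimes u$ for every unitary $u\in M_n^{\otimes k}$, so the required intertwining unitaries can be chosen genuinely invariant under the product action. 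For a general action $\delta$ on a general strongly self-absorbing $\mathcal{D}$, the flip on $\mathcal{D}\otimes\mathcal{D}$ is only \emph{approximately} inner, and nothing guarantees that the approximating unitaries can be chosen so that the cocycle condition in Definition \ref{def:ssa} holds for $\delta^{\otimes\infty}$. No result in the cited literature covers this, and it is precisely the obstruction that makes the corollary nontrivial for non-strongly-self-absorbing $\delta$: if your claim were available, the predecessor result \cite[Corollary 3.4]{ActionsTorsionFree}, which the paper's proof imitates, would itself have had a one-line proof, whereas Szab\'o too takes a detour.

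That detour is what the paper does instead: it runs the same formal chain but with the model action $\delta^0$ taken to be the noncommutative Bernoulli shift on $\bigotimes_\Z\mathcal{D}\cong\mathcal{D}$ (shifting tensor factors, not the diagonal power), which \emph{is} known to be strongly self-absorbing and moreover strongly outer. Theorem \ref{theorem:finite_Rokhlin_dimension} then gives $\alpha\simeq_{\mathrm{cc}}\alpha\otimes\delta^0$, and the step you obtain for free by reindexing is replaced by the uniqueness theorem \cite[Corollary 3.3]{ActionsTorsionFree}: there is a unique strongly outer $\Z$-action on $\mathcal{D}$ up to cocycle conjugacy, hence $\delta^0\simeq_{\mathrm{cc}}\delta^0\otimes\delta$ (the right-hand side being again strongly outer), after which the same bookkeeping yields $\alpha\simeq_{\mathrm{cc}}\alpha\otimes\delta$. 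In short: you pay with an unproved (and in this generality open) strong self-absorption claim to get the absorption $\delta\otimes\gamma\cong\gamma$ trivially, while the paper pays with a deep uniqueness theorem to get the strong self-absorption of its model for free. To repair your argument, replace $\delta^{\otimes\infty}$ by the Bernoulli shift and insert the uniqueness step.
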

\begin{proof}
The previous theorem implies that $\alpha$ has finite Rokhlin dimension with commuting towers. If $\delta$ is strongly self-absorbing, the result follows directly from \cite[Corollary 5.1]{RokhlinDimension} (using \cite[Theorem 4.10]{SzaboWuZacharias} to verify one additional necessary condition). Just as in the proof of \cite[Corollary 3.5]{ActionsTorsionFree} this can be used to obtain the result for a general $\delta: \Z \acts \mathcal{D}$ as follows. By \cite[Corollary 3.4]{ActionsTorsionFree} there is a unique strongly outer action $\delta^0\colon \Z \acts \mathcal{D}$ up to cocycle conjugacy, so in particular $\delta^0 \simeq_{\mathrm{cc}} \delta^0 \otimes \delta$. Then $\delta^0$ is automatically strongly self-absorbing by \cite[Theorem~3.2(i)]{ActionsTorsionFree},
 hence by the first part of this proof $\alpha \simeq_{\mathrm{cc}} \alpha \otimes \delta^0$. As a consequence we also get $\alpha \simeq_{\mathrm{cc}} \alpha \otimes \delta^0 \simeq_{\mathrm{cc}} \alpha \otimes \delta^0 \otimes \delta \simeq_{\mathrm{cc}} \alpha \otimes \delta$.
\end{proof}

In a similar way, Theorem \ref{theorem:free_case} can be used to prove a result from which Theorem \ref{theorem:intro_free} follows as a special case: 

\begin{theorem}\label{theorem:main_result_free}
Let $A$ be an algebraically simple, separable, nuclear, $\mathcal{Z}$-stable $C^*$-algebra for which the tracial simplex is a non-empty Bauer simplex with finite-dimensional extremal boundary. 
Let $G$ be a countable discrete group for which all finitely generated subgroups are virtually nilpotent. 
Take an action $\alpha\colon G \acts A$ for which the induced action on $\partial_e T(A)$ is free. 
Then for each $n \in \N$ and each unitary representation $\nu\colon G \rightarrow \mathcal{U}(M_n)$ there exists a unital equivariant $*$-homomorphism $(M_n, \op{Ad}(\nu)) \rightarrow (A^\omega \cap A', \alpha^\omega).$ In particular, $\alpha \simeq_{\mathrm{cc}} \alpha \otimes \op{id}_\mathcal{Z}$. 
\end{theorem}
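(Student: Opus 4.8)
The plan is to mirror the proof of Theorem~\ref{theorem:main}, but to invoke the free-case Theorem~\ref{theorem:free_case} in place of the technical Theorem~\ref{theorem:main_technical}, and then to bridge the discrepancy between the hypotheses on $G$ by a reindexation over finitely generated subgroups.

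First I would pass to the $W^*$-bundle picture. By Example~\ref{example:W-bundles}, $\mathcal{M} := \bar{A}^u$ is isomorphic to the trivial bundle $C_\sigma(\partial_e T(A), \mathcal{R})$ over $X := \partial_e T(A)$, which is compact (as $T(A)$ is Bauer), metrizable (as $A$ is separable) and finite-dimensional by hypothesis; moreover $\mathcal{M}$ is strictly separable and, having fiber $\mathcal{R}$, admits a unital $*$-homomorphism $\mathcal{R} \to \mathcal{M}^\omega \cap \mathcal{M}'$. Writing $\gamma: G \acts \mathcal{M}$ for the extension of $\alpha$, its induced action on $X$ is precisely the given action on $\partial_e T(A)$, hence free. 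Since the unit ball of $A$ is dense in that of $\mathcal{M}$ for $||\cdot||_{2,u}$, one has an isomorphism $A^\omega \cap A' \cong \mathcal{M}^\omega \cap \mathcal{M}'$ intertwining $\alpha^\omega$ and $\gamma^\omega$. It therefore suffices to construct, for every $n \in \N$ and every unitary representation $\nu: G \to \mathcal{U}(M_n)$, a unital equivariant $*$-homomorphism $(M_n, \op{Ad}(\nu)) \to (\mathcal{M}^\omega \cap \mathcal{M}', \gamma^\omega)$.

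The heart of the argument is a reduction to finitely generated subgroups. I would write $G = \bigcup_k G_k$ as an increasing union of finitely generated subgroups; each $G_k$ is virtually nilpotent and acts freely on $X$. If $G_k$ is infinite, Theorem~\ref{theorem:free_case} applied to the restricted action $G_k \acts \mathcal{M}$ yields a unital equivariant $*$-homomorphism $(M_n, \op{Ad}(\nu|_{G_k})) \to (\mathcal{M}^\omega \cap \mathcal{M}', \gamma^\omega)$, and the same proof goes through when $G_k$ is finite, since finite groups are residually finite and free actions of finitely generated virtually nilpotent groups on finite-dimensional spaces have finite Rokhlin dimension with commuting towers by \cite{SzaboWuZacharias} --- the only place where infiniteness was used. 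Reading off a representing sequence of such a $G_k$-equivariant embedding, I obtain, for every finite $F \ssubset G_k$, every $T \ssubset \mathcal{M}$ and every $\e > 0$, a unital completely positive map $\phi: M_n \to \mathcal{M}$ that is approximately multiplicative, approximately central on $T$, and approximately intertwines $\op{Ad}(\nu_g)$ with $\gamma_g$ for $g \in F$, all to within $\e$ in $||\cdot||_{2,u}$. Since every finite subset of $G$ lies in some $G_k$, Kirchberg's $\e$-test \cite{Kirchberg} assembles these local maps into a single exact unital $*$-homomorphism $(M_n, \op{Ad}(\nu)) \to (\mathcal{M}^\omega \cap \mathcal{M}', \gamma^\omega)$, equivariant for all of $G$.

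Transporting back through $A^\omega \cap A' \cong \mathcal{M}^\omega \cap \mathcal{M}'$ gives the first assertion. For the final clause I would specialize to the trivial representation $\nu$, so that $\op{Ad}(\nu) = \op{id}_{M_n}$ and the equivariant embedding lands in the fixed-point algebra, yielding a unital $*$-homomorphism $M_n \to (A^\omega \cap A')^{\alpha^\omega}$; as $G$ is amenable (being a directed union of virtually nilpotent, hence amenable, subgroups), Theorem~\ref{theorem:reduction_tracial_property} then gives $\alpha \simeq_{\mathrm{cc}} \alpha \otimes \op{id}_\mathcal{Z}$. I expect the main obstacle to be exactly the passage from finitely generated to arbitrary countable $G$: Theorem~\ref{theorem:free_case} is phrased only for countably infinite finitely generated groups, so the reindexation step, and the verification that the free-case argument survives for finite subgroups (and, via the $\e$-test, for non-finitely-generated $G$), are where the genuine care is required.
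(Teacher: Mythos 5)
Your proposal is correct in outline and, for most of its length, coincides with the paper's own proof: the passage to the trivial bundle $\bar{A}^u \cong C_\sigma(\partial_e T(A),\mathcal{R})$, the reduction to finitely generated subgroups (the paper's ``reindexation trick'' following \cite[Lemma 4.5]{SsaDynSyst3} is exactly your $\e$-test assembly of $G_k$-equivariant data), the application of Theorem~\ref{theorem:free_case} to the infinite finitely generated subgroups, and the derivation of $\alpha \simeq_{\mathrm{cc}} \alpha \otimes \op{id}_\mathcal{Z}$ from the trivial representation via Theorem~\ref{theorem:reduction_tracial_property}. Where you genuinely diverge is the finite subgroups. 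The paper does not try to push the Rokhlin-dimension argument of Theorem~\ref{theorem:free_case} through for finite $G$; instead it invokes Theorem~\ref{theorem:GH_rewrite} (for finite $G$ the induced action on the base space trivially factors through a finite quotient) and verifies its hypothesis by hand: for $\tau \in \partial_e T(A)$ the orbit quotient is $\bigoplus_{\sigma \in G\cdot\tau}\mathcal{R}$, freeness makes $G$ permute the summands simply transitively, and any unital $*$-homomorphism $\Psi$ of $M_n$ into the corresponding central sequence algebra becomes equivariant after the averaging $\Psi'(x) = \sum_{g \in G} p_g\,\beta_g\bigl(\Psi(\op{Ad}(\nu_{g^{-1}})(x))\bigr)$, where $p_g$ is the central projection onto the summand indexed by $g\cdot\tau$ and $\beta$ is the induced action. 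Your route instead asserts that the only use of infiniteness in Theorem~\ref{theorem:free_case} is the citation \cite[Corollary 7.5 and Remark 8.7]{SzaboWuZacharias}, and that this reference also covers finite groups. The first half of that assertion is accurate, but the second is not: that result is stated for countably infinite groups, so as written your finite case rests on a citation that does not literally apply. The fact you need is nonetheless true --- a free action of a finite group on a compact metrizable finite-dimensional space makes $X \rightarrow X/G$ a covering map, whence the induced action on $C(X)$ has finite Rokhlin dimension, and the commuting-towers requirement is automatic because $C(X)_\omega$, and hence $F_\omega(C(X))$, is commutative --- so your argument can be completed with a different reference (e.g.\ the finite-group results of \cite{HirshbergWinterZacharias}) or, more economically, by treating finite subgroups exactly as the paper does. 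The trade-off is clear: the paper's handling of the finite case is elementary and self-contained given Theorem~\ref{theorem:GH_rewrite}, while yours keeps a single uniform Rokhlin-dimension mechanism for all finitely generated subgroups at the price of one fact lying outside the citations you name.
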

\begin{proof}
 Let $\nu\colon G \rightarrow \mathcal{U}(M_n)$ be a unitary representation. Since $G$ is a countable group, it can be written as an increasing union of finitely generated subgroups. By a reindexation trick similar as in the proof of \cite[Lemma 5.5]{SsaDynSyst3}, it then suffices to prove that for each finitely generated subgroup $H \subset G$ there exists a unital equivariant $*$-homomorphism 
\[(M_n, \op{Ad}(\nu)\big|_{H}) \rightarrow (A^\omega \cap A', \alpha^\omega\big|_{H}).\]
Alternatively, we may just assume that $G$ is finitely generated and virtually nilpotent.

 Similarly as in the proof of Theorem \ref{theorem:main}, we use that $\bar{A}^u$ is isomorphic to the trivial $W^*$-bundle $C_\sigma(\partial_e T(A), \mathcal{R})$. Denote the extension of $\alpha$ to $\bar{A}^u$ by $\gamma$. It follows directly from Theorem \ref{theorem:free_case} that there exists a unital equivariant $*$-homomorphism
\[(M_n, \op{Ad}(\nu)) \rightarrow  ((\bar{A}^u)^\omega \cap (\bar{A}^u)',\gamma^\omega) \cong (A^\omega\cap A', \alpha^\omega).\]This holds in particular when $\nu$ is the trivial representation. Thus, equivariant $\mathcal{Z}$-stability of $\alpha$ follows from Theorem \ref{theorem:reduction_tracial_property}.
\end{proof}

\end{document}